\documentclass{amsart}
\usepackage{amssymb}
\usepackage{amsmath,amssymb, amsthm, fourier,mathrsfs}
\usepackage{enumitem,dsfont}
\usepackage{graphicx}
\usepackage{xcolor}
\usepackage{amsaddr}
\usepackage{slashed}
\usepackage{mathtools}
\mathtoolsset{showonlyrefs}


\newtheorem{Theorem}{Theorem}[section]
\newtheorem{Definition}[Theorem]{Definition}
\newtheorem{Corollary}[Theorem]{Corollary}
\newtheorem{Lemma}[Theorem]{Lemma}
\newtheorem{Proposition}[Theorem]{Proposition}
\newtheorem{Remark}{Remark}[section]

\title{Fractional Sobolev Spaces for the Singular-perturbed Laplace Operator in the $L^p$ setting}

 \numberwithin{equation}{section}

\author[V. Georgiev]{Vladimir Georgiev}

\address[V.Georgiev]{
Department of Mathematics,
University of Pisa,
Largo Bruno Pontecorvo 5,
I - 56127 Pisa, Italy}
\address{
Faculty of Science and Engineering, Waseda University,
3-4-1, Okubo, Shinjuku-ku, Tokyo 169-8555, Japan}
\address{
Institute of Mathematics and Informatics,  Bulgarian Academy of Sciences, Acad. G. Bonchev Str., Block 8, Sofia, 1113, Bulgaria
}
\email{georgiev@dm.unipi.it}
\author[M.Rastrelli]{Mario Rastrelli} 
\address[M.Rastrelli]{
Department of Mathematics,
University of Pisa,
Largo Bruno Pontecorvo 5,
I - 56127 Pisa, Italy}
\email{mario.rastrelli@phd.unipi.it}
\address{Department of Applied Physics, Waseda University
Tokyo 169-8555, Japan}

\thanks{
 V.G. and M.R. were partially supported by  GNAMPA 2024.
V.G. was partially supported by    the Top Global University Project, Waseda University, by the University of Pisa, Project PRA 2022 85 and by Institute of Mathematics and Informatics, Bulgarian Academy of Sciences. 
{ The authors would like to thank Raffaele Scandone who provided details comments and  useful ideas to improve the proof of Lemma \ref{l.dsgr06}. }
}

\subjclass{46E35, 47A60, 81Q15, 35Q41}
\keywords{Singular perturbation of Laplace operator, Sobolev spaces, Nonlinear Schr\"odinger equation, Fractional operators
}


\begin{document}

\begin{abstract}
We study the perturbed Sobolev spaces ${H^{s,p}_\alpha(\mathbb{R}^d)}$, associated with singular perturbation $\Delta_\alpha$ of Laplace operator in Euclidean space of dimensions 2 and 3. We extend the $L^2$ theory of perturbed Sobolev space to  the $L^p$ case, finding an analogue description in terms of standard Sobolev spaces. This enables us to extend the Strichartz estimates to the energy space and to treat the {local well-posedness} of the {Nonlinear Schr\"odinger equation} associated with this singular perturbation, with the contraction method.
\end{abstract}

\maketitle

\section{Introduction }
 In this paper, we study the fractional domains of the singular-perturbed Laplacian in dimensions 2 and 3 and we investigate the local existence of the solution of the Nonlinear Schr\"odinger equation with power-type nonlinearity in these spaces. 
With singular-perturbed Laplacian, we refer to the operator $-\Delta_{\alpha}$, with $\alpha\in (-\infty,+\infty]$, that is a delta-like perturbation of the Laplacian in $\mathbb{R}^d$, for $d=2,3$.

 To be more specific, we treat the operator $-\Delta+q\delta_0$, where $-\Delta$ is the standard Laplace operator, $q$ is a real constant and $\delta_0$ is the Dirac delta, in dimensions higher than one. In these cases, the expression  $-\Delta+q\delta_0$ is only defined in the distribution sense, so we consider the self-adjoint extensions of the symmetric operator
 $$\mathcal H=-\Delta|_{C^\infty_0(\mathbb{R}^d\setminus\{0\})}.$$
 If $d\geq 4$, $\mathcal H$  is essentially self-adjoint (see \cite{Simon73b}), meaning the only self-adjoint extension is the trivial one. However,  for $d=1,2,3$ the extensions  are  well known  classes of operators (see, for example, \cite{BF61} and \cite{AH81}). 
We focus on dimensions $d=2,3$ where the self-adjoint extensions form a one-parameter family of operators, parametrized by $\alpha\in (-\infty,+\infty]$. When $\alpha=+\infty$, the operator $-\Delta_{+\infty}$ corresponds to the Friedrichs extension of $\mathcal{H}$, the free Laplace operator.

The description of the domain $\mathcal{D}(-\Delta_\alpha)$, the resolvent formula  and the spectral properties are well known. In particular, for every $\alpha\in\mathbb{R}$, there exists $\omega_\alpha\geq0$ such that $-\Delta_\alpha+\omega>0$, for every $\omega>\omega_\alpha$. 
Recent studies (\cite{GMS18} for  $d=3$ and  \cite{GMS24} for $d=2$) have examinated the fractional operators $(-\Delta_\alpha+\omega)^{s/2}$  with $ s \in (0,2)$, providing an explicit characterization of their domains, that are independent of $\omega$.  A direct formula for negative fractional powers $(-\Delta_\alpha+\omega )^{-s/2}$ has been obtained allowing the description of the spaces  $\mathcal{D}((-\Delta_\alpha+\omega )^{s/2})$  in relation to the standard Sobolev spaces  
$$H^s(\mathbb{R}^d)=\{u\in L^2(\mathbb{R^d})|(1-\Delta)^{s/2}u\in L^2(\mathbb{R}^d)\}.$$ 

The similarities between the domains $\mathcal{D}((-\Delta_\alpha+\omega)^{s/2})$ and classical Sobolev spaces have led to the introduction of the name singular-perturbed Sobolev spaces $H^s_\alpha(\mathbb{R}^d)$. 
 
In our previous work \cite{GR25}, we extended the definition of  $H^1_\alpha(\mathbb{R}^2)$ to the $L^p$ setting. We considered the fractional operator $(-\Delta_\alpha+\omega )^{-1/2}$ on $L^2(\mathbb{R}^2)\cap L^p(\mathbb{R}^2)$, for $p>1$ and we analyzed the $L^p$-closure of its range. In this way, we defined the spaces  $H^{1,p}_\alpha(\mathbb{R}^2)=\mathcal{D}((-\Delta_\alpha+\omega)^{1/2})\subseteq L^p(\mathbb{R}^2)$, where $(-\Delta_\alpha+\omega)^{1/2}$ is seen as an unbounded operator from $L^p(\mathbb{R}^2)$ into  $L^p(\mathbb{R}^2)$. We also generalized the Strichartz estimates obtained in \cite{CMY19,CMY19b}, extending them to $H^{1,p}_\alpha(\mathbb{R}^2)$. Thanks to these tools, we established the local existence of the solution of the Nonlinear Schr\"odinger equation with power-type nonlinearity 
\begin{equation}\label{eq.NLSE}
    ( i \partial_t - \Delta_\alpha) u =\mu u|u|^{p-1}, \ p >1, \mu = \pm 1,
\end{equation}
in the energy space $H^1_\alpha(\mathbb{R}^2)$.

The present paper aims to complete the description of the fractional perturbed Sobolev spaces $H^{s,p}_\alpha(\mathbb{R}^d)=\mathcal{D}((-\Delta_\alpha+\omega)^{s/2})\subseteq L^p(\mathbb{R}^d)$ for both dimensions $d=2,3$, for all $s\in (0,2)$  and for suitable $p$, and to show analogies with the classical fractional Sobolev spaces
\begin{equation} \label{eq.Sobolev spaces}
    H^{s,p}(\mathbb{R}^d)=\{u\in L^p(\mathbb{R}^d)|(1-\Delta)^{s/2}u\in L^p(\mathbb{R}^d)\}.
\end{equation}
We analyze the properties of the fractional derivatives $(-\Delta)^{s/2}\mathbb{G}_\omega$ of the Green function and  we  generalize and simplify the proofs in \cite{GR25}. We also state in Section \ref{s.Sobolev embedding} a new Sobolev embedding inequality, that holds for perturbed spaces in dimension $d=2$. The precise descriptions of $H^{s,p}_\alpha(\mathbb{R}^d)$  and the extensions of Strichartz estimates to $H^{s,p}_\alpha(\mathbb{R}^d)$ allow to treat the local existence of the solution of the Nonlinear Schr\"odinger equation also in dimension $d=3$. The main novelty is the local well-posedness in $H^s_\alpha(\mathbb{R}^3)$ for suitable $s<1$, with the solution map being uniformly continuous.

The key estimate we use in this local existence result is
\begin{equation}
        \begin{aligned}
            \left\| \phi|\phi|^{p-1}\right\|_{H^{s,\ell}(\mathbb{R}^3)}\leq C \|\phi\|^p_{H^{s,\ell_1}_\alpha(\mathbb{R}^3)}
        \end{aligned}
    \end{equation}  
for $p+s<2$ and $\ell \in(3/2,2]$, $\ell_1\in[2,3)$ under certain conditions, as stated later in Corollary \ref{c.K2}.

\subsection{Domain of $\Delta_\alpha$}
The domain and action  of $\Delta_\alpha$ are well known and can be written in an explicit way ({ see} for example \cite{AH81}):
\begin{gather}
\label{eq.op_dom}\mathcal{D}(-\Delta_\alpha)\;=\;\Big\{\phi\in L^2(\mathbb{R}^d)\,\Big|\,\phi=g_{\alpha,\omega}+\frac{ g_{\alpha,\omega}(0)}{\alpha+c(\omega)}\,\mathbb{G}_\omega\textrm{ with }g_{\alpha,\omega}\in H^2(\mathbb{R}^d)\Big\},\\
\label{eq.opaction}
 (-\Delta_\alpha+\omega)\,\phi\;=\;(-\Delta+\omega)\,g_{\alpha,\omega}\,,
\end{gather}
where $\omega$ is a fixed complex number such that:
\begin{equation}\label{eq.2d1}
\begin{aligned}
& \omega \in \mathbb{C} \setminus \sigma(\Delta) = \mathbb{C} \setminus (-\infty,0], \\
    &  \alpha + c(\omega) \neq 0.
\end{aligned}
\end{equation}
This representation is independent of $\omega$.

The  function $\mathbb{G}_\omega$ is the unique {$L^2$-}solution of the Helmholtz equation for the Dirac delta
\begin{equation}\label{eq.HelmholtzDelta}
    (\omega-\Delta) \mathbb{G}_\omega  = {\delta_0},
\end{equation}
and $c(\omega)$ is the zeroth order term of the Taylor expansion of $\mathbb{G}_\omega$ near zero.
To be more precise, we refer to
\begin{equation} \label{eq.defGlambda}
\mathbb{G}_\omega(x) =
\left\{\begin{aligned}
 & \frac{1}{2\pi} K_{0}(\sqrt{\omega} |x|), \ \ \mbox{if $d=2$,} \\
 &  \frac{e^{-\sqrt{\omega}|x|}}{4\pi|x|}, \ \ \ \mbox{if $d=3$,}
\end{aligned}\right.
\end{equation}
where $K_{0}$ is  the modified Bessel function of order zero. The constant $c(\omega)$ is given by
\begin{equation}\label{eq.defc}
     c(\omega) =  \left\{
     \begin{aligned}
      & \frac{\gamma -\ln 2}{2\pi} + \frac{1}{4\pi} \ln \left( \omega \right)  \ \ \mbox{if $d=2,$} \\
       & \frac{\sqrt{\omega}}{4\pi} \ \ \mbox{if $d=3,$}
     \end{aligned}\right.
    \end{equation} 
where $\gamma\simeq0.577\dots$ is the Euler-Mascheroni constant and $\sqrt{\omega}=e^{\ln{(\omega)}/2}$ denotes the complex square root.

The unique root of $\alpha+c(\omega)=0$, if it exists, is real, positive and we denote it by $E_\alpha$. Explicitly, we have
\begin{equation} E_\alpha =
  \left\{  \begin{aligned}
        &  4 e^{-4\pi \alpha -2\gamma}   &\mbox{if $d=2$,} \\
     &(4\pi \alpha)^2  & \mbox{if $d=3$ and $\alpha <0,$}\\
     &  \mbox{does not exist} & \mbox{ if $d=3$ and $\alpha \geq 0$.}
    \end{aligned}\right.
\end{equation}
The number $E_\alpha$, when it exists, is the unique eigenvalue of $\Delta_\alpha$, with corresponding normalized eigenfunction
$$\psi_\alpha=\frac{\mathbb{G}_{E_\alpha}}{\|\mathbb{G}_{E_\alpha}\|_{L^2(\mathbb{R}^d)}}.$$
In this case, we have the explicit structure of the spectrum
\begin{equation}
    \sigma(\Delta_\alpha) =\sigma_{ess}(\Delta_\alpha)\cup\sigma_p(\Delta_\alpha)= (-\infty,0] \cup  \{E_\alpha\}.
\end{equation}
Otherwise, if $d=3$ and $\alpha \geq 0$, the spectrum is only essential
\begin{equation}
    \sigma(\Delta_\alpha) =\sigma_{ess}(\Delta_\alpha)= (-\infty,0].
\end{equation}

\subsection{Overview on existing results}
We give here a more detailed description of the results concerning the Sobolev Spaces $H^s_\alpha(\mathbb{R}^d)$, the Strichartz estimates and the local well-posedness of \eqref{eq.NLSE}.
To obtain the dispersive estimates without weights, an approach via wave operators was used.
In  \cite{CMY19,CMY19b} for $d=2$  and \cite{DMSY18} for $d=3$ 
the wave operators associated to the couple $(\-\Delta_\alpha,-\Delta)$
$$W_{\alpha}^\pm=\lim_{t\to \pm\infty}e^{it\Delta_{\alpha}}e^{-it\Delta}$$ were studied. It was proved that they  are bounded in $L^p(\mathbb{R}^2)$ for every $p>1$, while on dimension $d=3$ we have some smaller range on $p$, they are bounded on $L^p(\mathbb R^3)$ for $1<p<3$ and unbounded otherwise.
This bound on $p$ for dimension $d=3$ will be reflected on all following results.

 Direct consequences are the  $L^p-L^{p^\prime}$ estimates  
\begin{equation}\label{eq.IS}
      \|e^{it\Delta_{\alpha}}P_{ac}u\|_{L^{p}(\mathbb R^d)}\leq C_p t^{-\frac{d}{2}(\frac{1}{2}-\frac{1}{p})}\|u\|_{L^{p^\prime}(\mathbb R^d)}
\end{equation}
 for every $p\in [2,\infty)$ for $d=2$ and for every $p\in [2,3)$ for $d=3$, { with $p^\prime$ the H\"older conjugate of $p$, i.e. $\frac{1}{p}+\frac{1}{p^\prime}=1$.
 Further,  Strichartz estimates are obtained}:
\begin{equation}\label{eq.Strichartz}
    \begin{aligned}
& \left\| e^{i t \Delta_\alpha} P_{ac}f \right\|_{L^q(\mathbb{R},L^p(\mathbb{R}^d))} \lesssim \|f\|_{L^2(\mathbb{R}^2)}, \\
& \left\|\int_0^t e^{i(t-\tau)\Delta_\alpha} P_{ac}F(\tau) d\tau\right\|_{L^q(\mathbb{R},L^p(\mathbb{R}^d))} \lesssim  \left\| F\right\|_{L^{{s}'}(\mathbb{R},L^{{r}'}(\mathbb{R}^d)) },
    \end{aligned}
\end{equation}
where $P_{ac}$ denotes the projection on the absolutely continuous space and the couples $(p,q)$ and $(s,r)$ are Strichartz exponents, i.e.
\begin{equation}
    \frac{2}{q} + \frac{d}{p} = \frac{d}{2}, \mbox{ with } \left\{\begin{aligned}
        &p\in[2,\infty),\ q\in(2,\infty], &d=2,\\
        &p\in[2,3),\ q\in (4,\infty], &d=3.
    \end{aligned}\right.
\end{equation}
The explicit structure of the absolutely continuous subspace for $-\Delta_\alpha$ allows us to generalize the above inequalities locally in time, without the orthogonal projection.

In \cite{GMS18} for $d=3$ and in \cite{GMS24} for $d=2$ there is also an explicit characterization of fractional domains $\mathcal{D}((\omega-\Delta_\alpha)^\frac{s}{2})=H^s_\alpha(\mathbb{R}^d)$.
 They showed the following equalities for $s\in [0,2]$
 \begin{equation}\label{eq.Hs}
     \begin{aligned}
         &H^s_\alpha(\mathbb{R}^2)=\left\{\begin{aligned}
             &H^s(\mathbb{R}^2) &s<1,\\
             &\{\phi\in L^2(\mathbb{R}^2)|\phi=g+{\frac{g(0) }{\alpha + c (\omega)}} \mathbb{G}_\omega, g\in H^s(\mathbb{R}^2)\} &s>1,
         \end{aligned}\right.\\
         &H^s_\alpha(\mathbb{R}^3)=\left\{\begin{aligned}
             &H^s(\mathbb{R}^3) &s<\frac{1}{2},\\
             &H^s(\mathbb{R}^3)\dot+\operatorname{Span}\{\mathbb{G}_\omega\} &\frac{1}{2}<s<\frac{3}{2},\\
             &\{\phi\in L^2(\mathbb{R}^3)|\phi=g+{\frac{g(0) }{\alpha + c (\omega)}}\mathbb{G}_\omega, g\in H^s(\mathbb{R}^3)\} &s>\frac{3}{2},
         \end{aligned}\right.
     \end{aligned}
 \end{equation}
{ where the sets on the right do not depend on $\omega$}. Here, we did not write the transition cases, that are a bit delicate.
In particular, they proved that for high $s$ the spaces $H^s_\alpha(\mathbb{R}^d)$ have a structure similar to $\mathcal{D}(\Delta_\alpha)$; their elements can be decomposed in a regular and a singular parts, while for low $s$, they coincide with standard Sobolev spaces.

{
In \cite{GR25}, we proved a characterization  for $H_\alpha^{1,p}(\mathbb{R}^2)=\mathcal{D}((-\Delta_\alpha+\omega)^{1/2})\subseteq L^p(\mathbb{R}^2)$ that is perfectly consistent to \eqref{eq.Hs}:
\begin{equation}\label{eq.H1p}
    \begin{aligned}
         &H^{1,p}_\alpha(\mathbb{R}^2)=\left\{\begin{aligned}
             &H^{1,p}(\mathbb{R}^2) &p<2,\\
             &\{\phi\in L^p(\mathbb{R}^2)|\phi=g+{\frac{g(0) }{\alpha + c (\omega)}}\mathbb{G}_\omega, g\in H^s(\mathbb{R}^2)\} &p>2,
         \end{aligned}\right.
    \end{aligned}
\end{equation}
 with the sets on the right non depending on $\omega$ and we extended \eqref{eq.Strichartz} to
\begin{equation}\label{eq.Strichartz2}
    \begin{aligned}
& \left\| e^{i t \Delta_\alpha} f \right\|_{L^q((0,T)H^{1,p}_\alpha(\mathbb{R}^2))} \lesssim \|f\|_{L^2(\mathbb{R}^2)}, \\
& \left\|\int_0^t e^{i(t-\tau)\Delta_\alpha} P_{ac}F(\tau) d\tau\right\|_{L^q((0,T),H^{1,p}_\alpha(\mathbb{R}^2))} \lesssim  \left\| F\right\|_{L^{{s}'}((0,T),H^{1,{r}'}_\alpha(\mathbb{R}^2)) }.
    \end{aligned}
\end{equation}
}

In \cite{CFN21}  well-posedness of the nonlinear  Schr\"odinger equation \eqref{eq.NLSE} in dimensions $d=2,3$ was treated.
They proved that for $d=2$ and $p\geq1$, $d=3$ and $1\leq p<\frac{3}{2}$, there exists $T\in(0,1]$ such that \eqref{eq.NLSE} is well-posed in
$$C([0,T];\mathcal{D}(\Delta_\alpha))\cap C^1([0,T];L^2).$$
Moreover for $1<p<3$ if $d=2$ (the subcritical case), and for $1< p<\frac{3}{2}$ if $d=3$, the solution is global. The method used in \cite{CFN21} is the Kato method (see  \cite{K95}).
The existence of standing waves for the  2d Hartree type equation with point interaction   is  studied in \cite{GMS24}
\begin{equation}
    i\partial_tu=-\Delta_\alpha u +(w*|u|^2)u,
\end{equation}
where $w$ is a real-valued measurable function. 
The local existence result in this work is obtained using the method of \cite{OSY12}, which is an improvement of Cazenave's regularization approach that provides existence, uniqueness and conservation of mass, energy  of the solution, but there is no information about  regularity of the solution map. Recall that Kato method gives positive answers to the last point (see \cite{K95}). In the work \cite{FN23}, where they also study blow-up, the local existence theorem of { \cite{FGI22} }is used.

A local existence result in 3d case is treated  in \cite{MOS18}, where the local well-posedness is established in $H^s_\alpha{(\mathbb{R}^3)}$ for $ s \in [0,3/2),$ $ s \neq 1/2$ and in the radial $H^s_\alpha{(\mathbb{R}^3)}$ space for $s \in (3/2,2].$

{ In \cite{GR25}, we proved the local existence for the solution of \eqref{eq.NLSE} for the mass critical and supercritical cases $(p\geq3)$ in the energy space $H^1_\alpha(\mathbb{R}^2)$ and in $L^2(\mathbb{R}^2)$ for the mass subcritical case ($p<3$).

Also stability and instability of standing waves associated to \eqref{eq.NLSE} are studied and they require well-posedness results in energy space. Their existence is stated in \cite{ABCT} and \cite{ABCT22}.  
In \cite{FGI22}, treating the stability for $d=2$,   the existence of the solution map in the space $H^1_\alpha(\mathbb{R}^2)$ is proved, by appropriately modifying of Cazenave's approach and using a compactness argument.

In \cite{BGR25}, local and global well-posedness of solution of the reaction-diffusion equation with point interaction in dimension $d=2$ are studied. In \cite{FG24}, a new proof of local well-posedness of \eqref{eq.NLSE}, with initial datum  in $H^1_\alpha(\mathbb{R}^2)$ is provided, including the blow-up alternative. The Kato's method is used, estimating the $H^{1,p}_\alpha$-norm of the nonlinearity. 

\subsection{Organization of the paper}  
\begin{itemize}
    \item Section \ref{s.Main Results} presents the main results of this paper.  
\item In Section \ref{s.Preliminary}, we recall some preliminary results that will be used throughout the paper.
\item Section \ref{s.Lp} is devoted to extending the definition of $\Delta_\alpha$ as an operator on $L^p(\mathbb{R}^d)$.  

\item In Section \ref{s.Sobolev embedding}, we establish a Sobolev inequality for the spaces $H^{s,p}_{\alpha}(\mathbb{R}^d)$. 
\item Section \ref{s.characterization} provides an explicit characterization of these spaces.  

\item Finally, in Section \ref{s.lwp}, we prove the local well-posedness results. 
\end{itemize}

\section{Main Results}\label{s.Main Results}

The first result of this paper is the characterization of the spaces $H^{s,p}_\alpha(\mathbb{R}^d)=\mathcal{D}((\omega-\Delta_\alpha)^{s/2})\subseteq L^p(\mathbb{R}^d)$ in a similar way to \eqref{eq.Hs} and \eqref{eq.H1p}. See Subsection \ref{subsection Hspa} for a more precise definition of $H^{s,p}_\alpha(\mathbb{R}^d)$.
From now on $p$ will be in these intervals
\begin{equation} \label{eq. condition p}
        p \in \left\{
\begin{aligned}
    & (1,\infty) &\mbox{if $d=2,$}\\
    & \left(\frac{3}{2},3\right) &\mbox{if $d=3.$}
\end{aligned}
        \right.
    \end{equation}
We will see that, also in the general case, for every fixed $p$ that satisfies \eqref{eq. condition p}, for small $s$ the spaces $H^{s,p}_\alpha(\mathbb{R}^d)$ coincide with the classical one $H^{s,p}(\mathbb{R}^d)$, because the singularity is not strong enough. Otherwise every function in $H^{s,p}_\alpha(\mathbb{R}^d)$ can be decomposed in a regular part in $H^{s,p}(\mathbb{R}^d)$ and a singular part. We will state this result in the three following next Theorems.

\begin{Theorem} \label{t.2.2}
  Let $\alpha\in \mathbb{R}$, $d=2,3$, $s\in [0,2]$ and $p>1$ for $d=2$ or $\frac{3}{2}<p<3$ for $d=3$. If $s < \frac{d}{p} - d + 2,$  then
  \begin{equation}
       H^{s,p}_\alpha(\mathbb{R}^d) = H^{s,p}(\mathbb{R}^d).
  \end{equation}
\end{Theorem}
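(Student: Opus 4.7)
Write $H^{s,p}_\alpha=(\omega-\Delta_\alpha)^{-s/2}(L^p)$ and $H^{s,p}=(\omega-\Delta)^{-s/2}(L^p)$, so the task reduces to showing these two ranges coincide. The key structural observation is that the threshold $s<\frac{d}{p}-d+2$ is precisely the condition ensuring $\mathbb{G}_\omega\in H^{s,p}(\mathbb{R}^d)$: since $(1-\Delta)^{s/2}\mathbb{G}_\omega$ equals the Bessel kernel of order $2-s$ up to a bounded Fourier multiplier, and the latter belongs to $L^p$ iff $2-s>d/p'$. The strategy is therefore to show that the difference $(\omega-\Delta_\alpha)^{-s/2}-(\omega-\Delta)^{-s/2}$ is bounded from $L^p$ into $H^{s,p}$.

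Combining the $L^p$ Krein-type resolvent formula from Section \ref{s.Lp},
\[
(\omega-\Delta_\alpha)^{-1}-(\omega-\Delta)^{-1}\;=\;\frac{1}{\alpha+c(\omega)}\,\langle\mathbb{G}_{\overline\omega},\cdot\rangle\,\mathbb{G}_\omega,
\]
with Balakrishnan's integral representation $A^{-s/2}=\frac{\sin(\pi s/2)}{\pi}\int_0^\infty\lambda^{-s/2}(\lambda+A)^{-1}\,d\lambda$ for $0<s<2$, I obtain
\[
K_s f\;:=\;\bigl((\omega-\Delta_\alpha)^{-s/2}-(\omega-\Delta)^{-s/2}\bigr)f\;=\;\frac{\sin(\pi s/2)}{\pi}\int_0^\infty\frac{\lambda^{-s/2}\,\langle\mathbb{G}_{\overline{\lambda+\omega}},f\rangle}{\alpha+c(\lambda+\omega)}\,\mathbb{G}_{\lambda+\omega}\,d\lambda.
\]
Minkowski in $\lambda$ together with H\"older in the inner pairing yields
\[
\|K_s f\|_{H^{s,p}}\;\lesssim\;\|f\|_{L^p}\int_0^\infty\frac{\lambda^{-s/2}\,\|\mathbb{G}_{\overline{\lambda+\omega}}\|_{L^{p'}}\,\|\mathbb{G}_{\lambda+\omega}\|_{H^{s,p}}}{|\alpha+c(\lambda+\omega)|}\,d\lambda,
\]
and via the scaling $\mathbb{G}_\mu(x)=\mu^{(d-2)/2}\mathbb{G}_1(\sqrt\mu\,x)$ combined with \eqref{eq.defc}, the right-hand side reduces to an explicit integral of powers of $\lambda+\omega$ that is finite exactly under the hypothesis of the theorem and the restriction \eqref{eq. condition p} on $p$.

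With $K_s\colon L^p\to H^{s,p}$ bounded, the inclusion $H^{s,p}_\alpha\subseteq H^{s,p}$ is immediate: $(\omega-\Delta_\alpha)^{-s/2}f=(\omega-\Delta)^{-s/2}f+K_s f\in H^{s,p}$ for every $f\in L^p$. The reverse inclusion follows from the parallel statement $K_s(L^p)\subseteq H^{s,p}_\alpha$, resting on the fact that each integrand $\mathbb{G}_{\lambda+\omega}$ belongs to $H^{s,p}_\alpha$ (an $L^p$-extension of \eqref{eq.Hs} established in Section \ref{s.characterization}) and on the same Minkowski estimate taken now in the $H^{s,p}_\alpha$-norm; rearranging $(\omega-\Delta)^{-s/2}f=(\omega-\Delta_\alpha)^{-s/2}f-K_s f$ then places $(\omega-\Delta)^{-s/2}f$ in $H^{s,p}_\alpha$. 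The main obstacle is controlling the $\lambda$-integral: tracking the scaling exponents of $\|\mathbb{G}_\mu\|_{L^{p'}}$ and $\|\mathbb{G}_\mu\|_{H^{s,p}}$ both as $\lambda\to 0^+$ and as $\lambda\to+\infty$, and balancing them against the growth of $c(\mu)$, is the delicate technical step, and is where the restriction $3/2<p<3$ for $d=3$ enters to guarantee $\mathbb{G}_\mu\in L^{p'}$.
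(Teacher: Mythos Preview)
Your overall strategy---expressing the difference $K_s=(\omega-\Delta_\alpha)^{-s/2}-(\omega-\Delta)^{-s/2}$ via the Balakrishnan formula and the Krein resolvent identity---matches the paper's setup exactly. The gap is in the Minkowski step. Tracking the scaling exponents as you suggest, one finds for $\mu=\omega+\lambda\to\infty$
\[
\|\mathbb{G}_\mu\|_{L^{p'}}\sim\mu^{\frac{d}{2p}-1},\qquad \|(-\Delta)^{s/2}\mathbb{G}_\mu\|_{L^p}\sim\mu^{\frac{d-2}{2}+\frac{s}{2}-\frac{d}{2p}},\qquad |\alpha+c(\mu)|\gtrsim\mu^{\frac{d-2}{2}},
\]
so the integrand in your Minkowski bound behaves like $\lambda^{-s/2}\cdot\mu^{s/2-1}\sim\lambda^{-1}$ at infinity (for $d=2$ there is an extra $1/\ln\lambda$, which does not help). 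The $\lambda$-integral therefore diverges logarithmically, and the direct bound $\|K_sf\|_{H^{s,p}}\lesssim\|f\|_{L^p}$ cannot be obtained this way. The paper explicitly flags this obstruction in the proof of Proposition~\ref{pr.FractionalEstimate}: ``a direct computation with the rescaling inequality \ldots\ and H\"older is not sufficient.''

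What the paper does instead is bring the $\lambda$-integral \emph{inside} the pointwise evaluation: using the explicit behaviour of $(-\Delta)^{s/2}\mathbb{G}_1$ near the origin and at infinity (Lemma~\ref{l.dsgr06}), one shows
\[
\bigl|(-\Delta)^{s/2}K_sf(x)\bigr|\;\lesssim\;|x|^{-d/p}\,\|f\|_{L^p},
\]
which yields the weak-type bound $K_s:L^p\to L^{p,\infty}$. Varying $p$ slightly and applying Marcinkiewicz interpolation then upgrades this to the strong $L^p$ estimate. The same pointwise-plus-Marcinkiewicz mechanism is used for the reverse inclusion (Lemma~\ref{l.inclusion1}), where the positive-power formula again produces a borderline-divergent integral under naive Minkowski. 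Your proposal is missing precisely this mechanism; without it the argument does not close.
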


\begin{Theorem} \label{t.23A1} 
Let $\alpha\in \mathbb{R}$, $d=3$, $s\in [0,2]$ and $\frac{3}{2}<p<3$. If $\frac{3}{p}-1 < s < \frac{3}{p},$  then for any $\omega \in \mathbb{C} \setminus \sigma(\Delta_\alpha)$   we have
  \begin{equation}\label{eq.mrpg66}
  \begin{aligned}
     &  H^{s,p}_\alpha(\mathbb{R}^3) = H^{s,p}(\mathbb{R}^3) \dot+  \mathrm{span} \left\{ \mathbb{G}_\omega \right\}.
  \end{aligned}
      \end{equation}
\end{Theorem}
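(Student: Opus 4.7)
The plan is to establish both inclusions of \eqref{eq.mrpg66} by combining the Balakrishnan representation for negative fractional powers with the Kre\u\i n-type resolvent identity for $\Delta_\alpha$. Throughout I work with $0<s<2$ so that
\begin{equation}
(\omega-\Delta_\alpha)^{-s/2}=\frac{\sin(\pi s/2)}{\pi}\int_0^\infty \mu^{-s/2}\,(\mu+\omega-\Delta_\alpha)^{-1}\,d\mu,
\end{equation}
and I plug in
\begin{equation}
(\mu+\omega-\Delta_\alpha)^{-1}=(\mu+\omega-\Delta)^{-1}+\frac{1}{\alpha+c(\mu+\omega)}\,\langle\,\cdot\,,\overline{\mathbb{G}_{\mu+\omega}}\rangle\,\mathbb{G}_{\mu+\omega},
\end{equation}
so that for any $f\in L^p(\mathbb{R}^3)$
\begin{equation}
(\omega-\Delta_\alpha)^{-s/2} f=(\omega-\Delta)^{-s/2} f+R_s f,\qquad R_s f=\frac{\sin(\pi s/2)}{\pi}\int_0^\infty \mu^{-s/2}\frac{\langle f,\overline{\mathbb{G}_{\mu+\omega}}\rangle}{\alpha+c(\mu+\omega)}\mathbb{G}_{\mu+\omega}\,d\mu.
\end{equation}

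For the inclusion ``$\supseteq$'', I would first show that $\mathbb{G}_\omega\in H^{s,p}_\alpha(\mathbb{R}^3)$ when $s<3/p$ by exhibiting an $L^p$ function $h$ with $(\omega-\Delta_\alpha)^{-s/2}h=\mathbb{G}_\omega$ (equivalently, checking that the spectral expression $(\omega-\Delta_\alpha)^{s/2}\mathbb{G}_\omega$ produces an $L^p$ function using the explicit spectral decomposition built from the wave operators on the absolutely continuous subspace and from the eigenfunction $\psi_\alpha$ when it exists). Then I would show $H^{s,p}(\mathbb{R}^3)\subseteq H^{s,p}_\alpha(\mathbb{R}^3)$; for $g\in H^{s,p}$ the function $(\omega-\Delta)^{s/2}g\in L^p$ and the operator $(\omega-\Delta_\alpha)^{s/2}(\omega-\Delta)^{-s/2}$ differs from the identity by an operator whose kernel, via the resolvent identity above, is $L^p$-bounded thanks to the range $p\in(3/2,3)$.

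For the inclusion ``$\subseteq$'', I would use the key decomposition
\begin{equation}
\mathbb{G}_{\mu+\omega}(x)-\mathbb{G}_\omega(x)=\frac{e^{-\sqrt{\mu+\omega}|x|}-e^{-\sqrt{\omega}|x|}}{4\pi|x|},
\end{equation}
which cancels the $1/|x|$ singularity. This allows me to write
\begin{equation}
R_s f=c_f\,\mathbb{G}_\omega+\widetilde R_s f,\qquad c_f=\frac{\sin(\pi s/2)}{\pi}\int_0^\infty \mu^{-s/2}\frac{\langle f,\overline{\mathbb{G}_{\mu+\omega}}\rangle}{\alpha+c(\mu+\omega)}\,d\mu,
\end{equation}
and I would check the two steps: (i) $c_f\in\mathbb{C}$ is a bounded linear functional on $L^p(\mathbb{R}^3)$, which reduces by H\"older to the estimate $\|\mathbb{G}_{\mu+\omega}\|_{L^{p'}}\lesssim \mu^{(3/p-2)/2}$ for large $\mu$ (valid since $p'<3$), combined with $|\alpha+c(\mu+\omega)|\sim\sqrt{\mu}/(4\pi)$; the resulting integrand decays like $\mu^{(3/p-3-s)/2}$ at $\infty$ which is integrable precisely when $s>3/p-1$, while the factor $\mu^{-s/2}$ is integrable near $0$ because $s<2$; (ii) $\widetilde R_s f\in H^{s,p}(\mathbb{R}^3)$, which I would prove by taking a Fourier representation
\begin{equation}
\widehat{\mathbb{G}_{\mu+\omega}-\mathbb{G}_\omega}(\xi)=\frac{-\mu}{(\mu+\omega+|\xi|^2)(\omega+|\xi|^2)}
\end{equation}
and applying Minkowski in the $\mu$ variable together with the $L^p$ boundedness of Bessel-potential multipliers, again using the integrability window $3/p-1<s<3/p$.

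Finally, the sum is direct because $\mathbb{G}_\omega\notin H^{s,p}(\mathbb{R}^3)$ whenever $s>3/p-1$: Sobolev embedding would force $\mathbb{G}_\omega\in L^{3p/(3-sp)}(\mathbb{R}^3)$ with $3p/(3-sp)>3$, incompatible with the $1/|x|$ behavior at the origin. The hardest step I expect is (ii), i.e.\ obtaining the uniform $H^{s,p}$ bound on $\widetilde R_s f$ with the right dependence on $f$; this is exactly the spot where the tight range $3/p-1<s<3/p$ becomes essential, and where a careful kernel estimate or complex interpolation between the endpoint values of $s$ will be required, analogous to the argument of Lemma \ref{l.dsgr06} in the two-dimensional case.
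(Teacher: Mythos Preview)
Your overall architecture coincides with the paper's: the decomposition $R_sf=c_f\,\mathbb{G}_\omega+\widetilde R_sf$ is exactly \eqref{cdec5m} (the paper's $C_s(f)\mathbb{G}_\omega+\Gamma_s(f)$), your step (i) is Proposition~\ref{pr.C}, and the direct-sum argument is the same. So the strategy is correct.

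The genuine gap is in step (ii). The plan ``Minkowski in the $\mu$ variable together with the $L^p$ boundedness of Bessel-potential multipliers'' fails: after writing $(\omega-\Delta)^{s/2}(\mathbb{G}_{\mu+\omega}-\mathbb{G}_\omega)=-\mu\,(\omega-\Delta)^{s/2-1}\mathbb{G}_{\mu+\omega}$ and pulling the $L^p$ norm inside the integral, the scaling of each factor gives an integrand $\sim\mu^{-1}$ at infinity, so the integral diverges logarithmically. The paper flags exactly this at the start of the proof of Proposition~\ref{pr.FractionalEstimate}. Its remedy is not Minkowski but a \emph{pointwise} weak-type bound: setting $m=s-2\in(3/p-3,\,3/p-1)$ one proves
\[
\Bigl|(-\Delta)^{m/2}\!\int_0^\infty t^{-m/2}\,\mathbb{B}_{\omega+t}(f)(x)\,dt\Bigr|\lesssim |x|^{-3/p}\,\|f\|_{L^p},
\]
using the asymptotics of $(-\Delta)^{m/2}\mathbb{G}_1$ from Lemma~\ref{l.dsgr06}; this yields an $L^{p,\infty}$ estimate, and Marcinkiewicz interpolation in $p$ upgrades it to the desired $L^p$ bound. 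The ``careful kernel estimate'' you mention at the end is therefore the right instinct, but it must be organized as weak-type plus real interpolation rather than as a norm estimate under the $\mu$-integral.

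Two smaller points. For $\mathbb{G}_\omega\in H^{s,p}_\alpha$ the paper avoids wave operators and spectral resolution entirely: Lemma~\ref{l.GinH} computes $(\omega-\Delta_\alpha)^{s/2}\mathbb{G}_\omega$ directly from the positive-power Balakrishnan formula via the algebraic identity $(t+\omega-\Delta_\alpha)^{-1}(\omega-\Delta_\alpha)\mathbb{G}_\omega=\frac{\alpha+c(\omega)}{\alpha+c(\omega+t)}\,\mathbb{G}_{\omega+t}$, which makes the $L^p$ bound an elementary one-variable integral. For $H^{s,p}\subseteq H^{s,p}_\alpha$ (Lemma~\ref{l.inclusion1}) the paper again uses a pointwise $|x|^{-3/p}$ bound and Marcinkiewicz rather than a direct operator-norm argument; your sketch there would run into the same borderline-divergence issue.
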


\begin{Remark}
    We note that the range of $s$ in Theorem \ref{t.23A1} coincides with the range
    $$ \frac{d}{p}-d+2 < s < \frac{d}{p},$$
    that is empty for $d=2$. For this reason, the theorem is formulated only in dimension $d=3.$
\end{Remark}

\begin{Theorem} \label{t.2.1}
Let $\alpha\in \mathbb{R}$, $d=2,3$, $s\in [0,2]$ and $p>1$ for $d=2$ or $\frac{3}{2}<p<3$ for $d=3$. If $s>\frac{d}{p},$ then for any $\omega \in \mathbb{C} \setminus \sigma(\Delta_\alpha)$   we have
  \begin{equation}\label{eq.mrpg2}
  \begin{aligned}
     &   H^{s,p}_\alpha(\mathbb{R}^d) =  \left\{ \phi = g + \frac{g(0) }{\alpha + c (\omega)} \mathbb G_{\omega}, \ \ g \in  H^{s,p}(\mathbb{R}^d)   \right\},
  \end{aligned}
      \end{equation}
      with $c(\omega)$ defined in \eqref{eq.defc}.
\end{Theorem}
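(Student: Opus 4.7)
The plan is to characterize $H^{s,p}_\alpha(\mathbb{R}^d)$ as the range of the negative fractional power $(\omega-\Delta_\alpha)^{-s/2}$ acting on $L^p(\mathbb{R}^d)$, relying on two standard ingredients: the Balakrishnan-type integral representation
\begin{equation*}
(\omega-\Delta_\alpha)^{-s/2} = \frac{\sin(\pi s/2)}{\pi}\int_0^\infty \lambda^{-s/2}(\omega+\lambda-\Delta_\alpha)^{-1}\, d\lambda,
\end{equation*}
and the Krein-type resolvent formula that splits $(\omega+\lambda-\Delta_\alpha)^{-1}$ into the free resolvent $(\omega+\lambda-\Delta)^{-1}$ plus a rank-one correction of the form $[\alpha+c(\omega+\lambda)]^{-1}\langle\mathbb{G}_{\overline{\omega+\lambda}},\,\cdot\,\rangle\mathbb{G}_{\omega+\lambda}$. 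Both ingredients are available on $L^p$ for $p$ in the range \eqref{eq. condition p} after the $L^p$ extension of $\Delta_\alpha$ developed in Section \ref{s.Lp}.

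For the forward inclusion, starting from $\phi = (\omega-\Delta_\alpha)^{-s/2} f$ with $f\in L^p$, the free portion of the integral yields $(\omega-\Delta)^{-s/2} f\in H^{s,p}(\mathbb{R}^d)$, while the rank-one portion produces a superposition of the functions $\mathbb{G}_{\omega+\lambda}$. Decomposing $\mathbb{G}_{\omega+\lambda} = \mathbb{G}_\omega + (\mathbb{G}_{\omega+\lambda}-\mathbb{G}_\omega)$ and showing that the $\lambda$-integral of the regular difference lies in $H^{s,p}(\mathbb{R}^d)$ produces a decomposition $\phi = g + \kappa\,\mathbb{G}_\omega$ with $g\in H^{s,p}(\mathbb{R}^d)$ and a scalar $\kappa$. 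Since $s>d/p$, the Sobolev embedding $H^{s,p}(\mathbb{R}^d)\hookrightarrow C^0(\mathbb{R}^d)$ makes $g(0)$ well defined, and the identity $\kappa=g(0)/(\alpha+c(\omega))$ can be extracted by passing to a dense subset where \eqref{eq.op_dom}--\eqref{eq.opaction} apply directly and matching the regular parts on both sides.

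For the reverse inclusion, given $g\in H^{s,p}(\mathbb{R}^d)$, I would construct $f\in L^p(\mathbb{R}^d)$ so that $(\omega-\Delta_\alpha)^{-s/2} f = g+\frac{g(0)}{\alpha+c(\omega)}\mathbb{G}_\omega$. Writing $g=(\omega-\Delta)^{-s/2} h$ for a unique $h\in L^p$ via Bessel potential theory, the forward computation from the previous step rewrites $(\omega-\Delta_\alpha)^{-s/2} h$ in the form $g+\kappa\,\mathbb{G}_\omega$; a rank-one adjustment of $h$ tunes the Krein contribution so that $\kappa$ matches the prescribed value $g(0)/(\alpha+c(\omega))$. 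The independence of the right-hand side from $\omega$ then follows from the elementary identity $\frac{\mathbb{G}_\omega}{\alpha+c(\omega)}-\frac{\mathbb{G}_{\omega'}}{\alpha+c(\omega')}\in H^{s,p}(\mathbb{R}^d)$, which allows one to absorb any change of basepoint into the regular component $g$.

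The main obstacle is the quantitative control of the Balakrishnan integral in $L^p(\mathbb{R}^d)$ uniformly in $\lambda$: the Krein correction must be integrable both near $\lambda=0$ and as $\lambda\to\infty$, and this is precisely where the threshold $s>d/p$ becomes essential. Near the origin, the logarithmic or algebraic behaviour of $[\alpha+c(\omega+\lambda)]^{-1}$ and the $L^p$-size of $\mathbb{G}_{\omega+\lambda}$ must combine with $\lambda^{-s/2}$ to yield a convergent integral, while at infinity the decay of $\mathbb{G}_{\omega+\lambda}-\mathbb{G}_\omega$ in $H^{s,p}$ must be sharp enough to justify its absorption into the regular part. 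Carrying out these estimates dimension by dimension with the explicit formulas \eqref{eq.defGlambda}--\eqref{eq.defc} is the technical heart of the proof.
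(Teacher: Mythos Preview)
Your overall architecture matches the paper's: Balakrishnan integral for $(\omega-\Delta_\alpha)^{-s/2}$, Krein resolvent formula, and the splitting $\mathbb{G}_{\omega+\lambda}=\mathbb{G}_\omega+(\mathbb{G}_{\omega+\lambda}-\mathbb{G}_\omega)$ to isolate a scalar multiple of $\mathbb{G}_\omega$ from an $H^{s,p}$ remainder. The paper packages these pieces as the functional $C_s(f)$ and the operator $\Gamma_s(f)$ and proves exactly the bounds you anticipate.

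Two points deserve comment. First, for the identification $\kappa=g(0)/(\alpha+c(\omega))$ you propose a density argument through $H^{2,p}_\alpha$, whereas the paper computes $g(0)=(\omega-\Delta)^{-s/2}f(0)+\Gamma_s(f)(0)$ directly: both terms are written as integrals of $\langle f,\mathbb{G}_{\omega+t}\rangle$ against explicit kernels, and a cancellation leaves precisely $(\alpha+c(\omega))C_s(f)$. Your density route is viable but requires checking that the Balakrishnan decomposition coincides with the defining $s=2$ decomposition on $H^{2,p}_\alpha$ (this follows from uniqueness, since $\mathbb{G}_\omega\notin H^{s,p}$) and that the map $\phi\mapsto g$ is continuous from $H^{s,p}_\alpha$ to $H^{s,p}$; the paper's explicit calculation avoids these checks. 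Second, your diagnosis that $s>d/p$ is what makes the Krein integral converge is not quite right: convergence of $C_s(f)$ and the $H^{s,p}$ bound on $\Gamma_s(f)$ only need $s>d/p-(d-2)$, which coincides with $d/p$ in dimension two but is strictly weaker in dimension three. The genuine role of $s>d/p$ is exactly what you said earlier, namely the Sobolev embedding $H^{s,p}\hookrightarrow C^0$ that makes $g(0)$ meaningful.

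There is a concrete gap in your reverse inclusion. If $g=(\omega-\Delta)^{-s/2}h$, then $(\omega-\Delta_\alpha)^{-s/2}h=g+\Gamma_s(h)+C_s(h)\mathbb{G}_\omega$, so the regular part is $g+\Gamma_s(h)$, not $g$; a rank-one adjustment of $h$ cannot in general undo the $\Gamma_s(h)$ contamination of the regular component while simultaneously tuning the scalar. One correct route is to observe that the forward map $\phi\mapsto g$ is bounded $H^{s,p}_\alpha\to H^{s,p}$, surjects onto $H^{2,p}$ when restricted to $H^{2,p}_\alpha$ (by the very definition of $H^{2,p}_\alpha$), and then argue closedness of the range via the norm equivalence $\|\phi\|_{H^{s,p}_\alpha}\sim\|g\|_{H^{s,p}}$; the nontrivial direction $\|\phi\|_{H^{s,p}_\alpha}\lesssim\|g\|_{H^{s,p}}$ amounts to inverting $I+(\omega-\Delta)^{s/2}\Gamma_s$ on $L^p$, which you have not addressed.
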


Thanks to these spaces we will treat the following Cauchy problem
\begin{equation}\label{eq.CP81}
\begin{aligned}
    &( i \partial_t + \Delta_\alpha) u =\mu u|u|^{p-1}, \ p >1, \mu = \pm 1,\\
   & u(0) = u_0 \in H^s_\alpha(\mathbb{R}^d).
    \end{aligned}
\end{equation}

The mass and energy are conserved
\begin{equation}
     \|u(t)\|^2_{L^{2}(\mathbb{R}^d)}  =  \|u(0)\|^2_{L^{2}(\mathbb{R}^d)}  , \ \ E(t) = E(0),
\end{equation}
with
\begin{equation}
\begin{aligned}
    & E(t) =   \frac{1}{2} \langle -\Delta_\alpha u(t), u(t) \rangle_{L^2(\mathbb{R}^d)} + \frac{\mu}{p+1} \|u(t)\|^{p+1}_{L^{p+1}(\mathbb{R}^d)}  \\
    & =\frac{1}{2} \left\| (\omega-\Delta_\alpha)^{1/2} u(t)\right\|^2_{L^2(\mathbb{R}^d)} -  \frac{\omega}{2} \|u(t)\|^2_{L^{2}(\mathbb{R}^d)}   + \frac{\mu}{p+1} \|u(t)\|^{p+1}_{L^{p+1}(\mathbb{R}^d)}.
\end{aligned}
\end{equation}
First we prove the local existence result in $L^2(\mathbb{R}^3)$ with $p\in[1,2)$. We note that the range of $p$ is slightly wider than $[1,\frac{3}{2})$, that is the interval for local existence  in $H^2_\alpha(\mathbb{R}^3)$ (see \cite{CFN21}).
 
\begin{Theorem}\label{t.le19}
   Let $d=3$. For any $p \in[1,2)$ and any $R>0$ there  exists $T=T(R,p)>0$ so that for any
    $$ u_0 \in B_{L^2}(R) = \left\{ \phi \in L^2(\mathbb{R}^3);  \|\phi\|_{L^2(\mathbb{R}^3)} \leq R \right\}$$
    there exists a unique solution
    $$ u \in C([0,T]; L^2(\mathbb{R}^3))$$
    to the integral equation
\begin{equation} \label{eq.ie30}
    u = e^{it \Delta_\alpha} u_0 -i{\mu} \int_0^t e^{i(t-\tau)\Delta_\alpha}  u(\tau)|u(\tau)|^{p-1} d\tau
\end{equation}
    associated to \eqref{eq.CP81}.
\end{Theorem}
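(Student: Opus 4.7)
The plan is to run a standard Cazenave--Weissler contraction mapping argument on the Duhamel formulation \eqref{eq.ie30}, using a single admissible Strichartz pair tailored to the subcritical nonlinearity. For $p\in[1,2)$ I would set $r = p+1 \in [2,3)$ and let $q$ be determined by the Strichartz admissibility relation $2/q + 3/r = 3/2$; explicitly, $q = 4(p+1)/(3(p-1)) \in (4,\infty]$, so that $(q,r)$ is admissible in the 3D range $r\in[2,3)$, $q\in(4,\infty]$ appearing in \eqref{eq.Strichartz}.

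First I would introduce the working space
\begin{equation*}
X_T = C([0,T]; L^2(\mathbb{R}^3)) \cap L^q((0,T); L^r(\mathbb{R}^3))
\end{equation*}
with norm $\|u\|_{X_T} = \|u\|_{L^\infty_T L^2_x} + \|u\|_{L^q_T L^r_x}$, and let $\Phi(u)$ denote the right-hand side of \eqref{eq.ie30}. The homogeneous and inhomogeneous Strichartz estimates, applied with the same pair $(q,r)$ on both sides and combined with the observation from the introduction that the projection $P_{ac}$ may be dropped locally in time (its complement being at most one-dimensional and spanned by a single eigenfunction of $-\Delta_\alpha$), give
\begin{equation*}
\|\Phi(u)\|_{X_T} \le C \|u_0\|_{L^2(\mathbb{R}^3)} + C \bigl\| u|u|^{p-1}\bigr\|_{L^{q'}((0,T); L^{r'}(\mathbb{R}^3))}.
\end{equation*}

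Next I would estimate the nonlinear term. Since $r = p+1$ we have $p/r = 1/r'$, so H\"older in space yields, pointwise in time, $\bigl\| u|u|^{p-1}\bigr\|_{L^{r'}_x} = \|u\|_{L^r_x}^p$. A further H\"older in time, using that $q > p+1$ (an inequality equivalent to $p<2$ in this range), gives
\begin{equation*}
\bigl\| u|u|^{p-1}\bigr\|_{L^{q'}_T L^{r'}_x} \le T^{\theta} \|u\|_{L^q_T L^r_x}^p, \qquad \theta = 1 - \frac{p+1}{q} > 0.
\end{equation*}
An analogous Lipschitz estimate for the difference follows from the pointwise bound $\bigl| u|u|^{p-1} - v|v|^{p-1}\bigr| \lesssim (|u|^{p-1} + |v|^{p-1})|u-v|$ and a second application of H\"older. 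Consequently, $\Phi$ maps the ball of radius $2CR$ in $X_T$ into itself and acts there as a $1/2$--contraction, provided $T=T(R,p)$ is sufficiently small; Banach's fixed point theorem then delivers the unique solution.

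The main obstacle, and the reason the range stops exactly at $p=2$ rather than reaching the full mass--subcritical threshold $p=7/3$ available for the free Laplacian in 3D, is the restriction $r<3$ on the admissible Strichartz exponents in \eqref{eq.Strichartz}: forcing $r=p+1<3$ already pins $p<2$, and as $p\to 2^-$ one has $q\to 4^+$, so that $\theta \to 0$ and the contraction mechanism degenerates. Aside from this bookkeeping, the remaining steps are entirely routine.
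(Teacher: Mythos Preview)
Your argument is correct. The approach differs from the paper's: there the range $[1,2)$ is split into two overlapping subcases, $1\le p<3/2$ (taking $\tilde r=2$, $r=2p$) and $4/3<p<2$ (taking $\tilde r=3/(1+\varepsilon)$, $r=3p/(2-\varepsilon)$), whereas you cover everything at once with the single admissible pair $r=p+1$, using the same pair on both sides of the inhomogeneous Strichartz estimate. Your route is the cleaner Cazenave--Weissler normalisation and avoids the case distinction entirely; the paper's choices, by contrast, decouple the dual pair from the target pair, which is slightly more flexible but not needed here. One small slip in your write-up: with $q=4(p+1)/(3(p-1))$ the inequality $q>p+1$ is equivalent to $p<7/3$, not to $p<2$; the genuine obstruction at $p=2$ is exactly the admissibility bound $r=p+1<3$, which you do identify correctly in the final paragraph.
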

We now state local existence in the spaces $H^s_\alpha(\mathbb{R}^d)$. We resume the following result in dimension $d=2$, already proved in \cite{GR25} and we improve it also for small $p\in (1,2)$.
\begin{Theorem}\label{t.Schrodinger2}
    Let $d=2$, for any $p>1$ and any $R>0$ there  exists $T=T(R,p)>0$ so that for any
    $$ u_0 \in B(R) = \left\{ \phi \in H^1_\alpha(\mathbb{R}^2); \|\phi\|_{H^1_\alpha(\mathbb{R}^2)} \leq R \right\}$$
    there exists a unique mild solution
    $$ u \in C([0,T]; H^1_\alpha(\mathbb{R}^2))$$
    to the integral equation
\begin{equation}
    u = e^{it \Delta_\alpha} u_0 -i {\mu}\int_0^t e^{i(t-\tau)\Delta_\alpha}  u(\tau)|u(\tau)|^{p-1} d\tau
\end{equation}
    associated to \eqref{eq.CP81}.  The solution map
    $$ u_0 \in B(R)  \mapsto u \in C([0,T]; H^1_\alpha(\mathbb{R}^2)) $$
    is  continuous for $1 < p \leq 2,$ for $p>2$ the solution map is uniformly Lipschitz continuous.
\end{Theorem}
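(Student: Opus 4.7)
I would run a Kato--Strichartz fixed-point scheme directly in the perturbed Sobolev framework, extending the argument of \cite{GR25} to the low-exponent regime. Fix a Strichartz-admissible pair $(q,r)$ for $d=2$ with $r$ large and work in
\[
X_T = C([0,T]; H^1_\alpha(\mathbb{R}^2)) \cap L^q\bigl((0,T); H^{1,r}_\alpha(\mathbb{R}^2)\bigr),
\]
and look for a fixed point of
\[
\Phi(u)(t) = e^{it\Delta_\alpha} u_0 - i\mu \int_0^t e^{i(t-\tau)\Delta_\alpha}\bigl(u|u|^{p-1}\bigr)(\tau)\,d\tau
\]
on a closed ball $\mathcal B_M\subset X_T$. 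The homogeneous term is controlled by $\|u_0\|_{H^1_\alpha}$ via the Strichartz estimates \eqref{eq.Strichartz2}, so the whole scheme reduces to a nonlinear estimate in the perturbed spaces together with H\"older in time.

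\textbf{Nonlinear estimate.} The heart of the argument is an inequality of the form
\[
\bigl\|u|u|^{p-1}\bigr\|_{H^{1,r'}_\alpha(\mathbb{R}^2)} \lesssim \|u\|_{H^{1,r}_\alpha(\mathbb{R}^2)}^p,
\]
which is the two-dimensional analogue of Corollary \ref{c.K2}. Using the characterization \eqref{eq.H1p}, I write $u = g + \beta\mathbb{G}_\omega$ with $g\in H^{1,r}(\mathbb{R}^2)$ and $\beta = g(0)/(\alpha+c(\omega))$, expand $u|u|^{p-1}$ algebraically, and control each piece by combining Sobolev embeddings for $g$ with the explicit integrability of $\mathbb{G}_\omega$ and of its fractional derivatives established in the earlier sections. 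The resulting expression then decomposes into a regular $H^{1,r'}$ part and a singular multiple of $\mathbb{G}_\omega$, which fits exactly the structure of $H^{1,r'}_\alpha(\mathbb{R}^2)$. Combined with H\"older in time, this yields $\|\Phi(u)\|_{X_T} \leq C\|u_0\|_{H^1_\alpha} + CT^\theta\|u\|_{X_T}^p$, so $\Phi$ maps $\mathcal B_M$ into itself for $T$ small enough.

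\textbf{Contraction and regularity of the solution map.} For the difference estimate I invoke the pointwise inequality $\bigl| |a|^{p-1}a - |b|^{p-1}b \bigr| \leq C(|a|+|b|)^{p-1}|a-b|$, which, after the same kind of decomposition, gives
\[
\|\Phi(u) - \Phi(v)\|_{X_T} \leq CT^\theta (\|u\|_{X_T}+\|v\|_{X_T})^{p-1}\|u-v\|_{X_T},
\]
providing contraction in $X_T$ for $T=T(M)$ small and hence existence and uniqueness of the mild solution. For $p > 2$ the map $z\mapsto z|z|^{p-1}$ is $C^1$ with $|z|^{p-1}$-growth at the derivative level, and the same argument applied to differences $u_0-\tilde u_0$ produces uniform Lipschitz continuity of the solution map on bounded sets. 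For $1 < p \leq 2$ the nonlinearity is only H\"older of order $p-1$ once one differentiates, so one recovers merely continuity of the solution map, which I would extract either by a direct $\varepsilon$-$\delta$ argument from the $X_T$-contraction or by approximating $u|u|^{p-1}$ with smoother nonlinearities and passing to the limit via a compactness argument. The main obstacle is the nonlinear estimate when the singular component $\beta\mathbb{G}_\omega$ is active: since $\mathbb{G}_\omega\notin L^\infty(\mathbb{R}^2)$ (logarithmic singularity at the origin), the cross terms $g^{p-k}(\beta\mathbb{G}_\omega)^k$ must be handled carefully and repackaged into the regular-plus-singular decomposition of $H^{1,r'}_\alpha$, which is precisely where the fractional analysis of $(-\Delta)^{s/2}\mathbb{G}_\omega$ developed in the previous sections is used.
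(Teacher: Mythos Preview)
Your overall Strichartz/fixed-point strategy matches the paper's, but two steps in the proposal are not right as written.

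\emph{The nonlinear estimate.} You propose to ``expand $u|u|^{p-1}$ algebraically'' and ``repackage'' it into a regular-plus-singular element of $H^{1,r'}_\alpha(\mathbb{R}^2)$. This misses the central simplification: since $r$ is large, $r'<2$, and by Theorem~\ref{t.2.2} (or \eqref{eq.H1p}) one has $H^{1,r'}_\alpha(\mathbb{R}^2)=H^{1,r'}(\mathbb{R}^2)$ exactly, with no singular part. There is therefore nothing to repackage on the target side, and the ``cross terms $g^{p-k}(\beta\mathbb G_\omega)^k$'' make no sense for non-integer $p$. What the paper actually does is the natural thing: it estimates $\|\,|u|^{p-1}\nabla u\,\|_{L^{r'}}$ by writing $\nabla u=\nabla g+c_*\nabla\mathbb G_\omega$ (decomposing only the gradient, not the full nonlinearity) and applying H\"older together with the Sobolev embedding $H^1_\alpha(\mathbb{R}^2)\hookrightarrow L^q(\mathbb{R}^2)$ for all $q<\infty$; the fractional analysis of $(-\Delta)^{s/2}\mathbb G_\omega$ is not invoked here.

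\emph{The contraction for $1<p\le 2$.} The pointwise inequality $\big||a|^{p-1}a-|b|^{p-1}b\big|\le C(|a|+|b|)^{p-1}|a-b|$ controls only values, not gradients, so it cannot give the displayed estimate $\|\Phi(u)-\Phi(v)\|_{X_T}\le CT^\theta(\cdots)^{p-1}\|u-v\|_{X_T}$ in a space carrying an $H^1_\alpha$ norm unless $p>2$. You later acknowledge that the derivative of the nonlinearity is only H\"older for $p\le 2$, but this undermines the existence argument itself, not just the regularity of the solution map. The paper's fix is the standard Kato device: prove that $\mathfrak K$ maps a ball in $L^\infty(0,T)H^1_\alpha$ into itself (this is your boundedness estimate, which does hold for all $p>1$), but verify the contraction in a weaker topology, following \cite{K95} and Section~4.4 of \cite{Cazenave}. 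For $p>2$ the paper does obtain a genuine $H^1_\alpha$-level contraction via the additional pointwise bound on $\nabla(f(u)-f(v))$, which requires $|u|^{p-2}$ to be locally integrable and is where $p>2$ enters.
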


Finally we will prove a similar result also in dimension $d=3$ for small $p$ and $s$.

\begin{Theorem}\label{t.Schrodinger3}
    Let $d=3,$   $p \in (1,2)$ and $s \in (0,1)$ satisfy
      \begin{equation}\label{eq.ps54}
      \begin{aligned}
         &  p+s < 2.
      \end{aligned}
      \end{equation}
    
      Then for any  $R>0$ there  exists $T=T(R,p)>0$ so that for any
    $$ u_0 \in B_s(R) = \left\{ \phi \in H^s_\alpha(\mathbb{R}^3); \|\phi\|_{H^s_\alpha(\mathbb{R}^3)} \leq R \right\}$$
    there exists a unique mild solution
    $$ u \in C([0,T]; H^s_\alpha(\mathbb{R}^3))$$
    to the integral equation
\begin{equation}
    u = e^{it \Delta_\alpha} u_0 -i {\mu}\int_0^t e^{i(t-\tau)\Delta_\alpha}  u(\tau)|u(\tau)|^{p-1} d\tau
\end{equation}
    associated to \eqref{eq.CP81}.  
\end{Theorem}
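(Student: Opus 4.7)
The strategy is a standard Banach contraction argument for the Duhamel map
$$\Phi(u)(t) := e^{it\Delta_\alpha} u_0 - i\mu \int_0^t e^{i(t-\tau)\Delta_\alpha} u(\tau) |u(\tau)|^{p-1}\, d\tau$$
on a closed ball of the space-time Banach space
$$X_T = C([0,T]; H^s_\alpha(\mathbb{R}^3)) \cap L^{q_1}((0,T); H^{s,\ell_1}_\alpha(\mathbb{R}^3)),$$
where $(q_1,\ell_1)$ is a Strichartz-admissible pair with $\ell_1 \in [2,3)$ selected so that $\ell_1$ falls into the range of Corollary~\ref{c.K2}. First I would invoke the Strichartz estimates on the perturbed Sobolev scale in dimension three (the $d=3$ analogue of \eqref{eq.Strichartz2}, obtained from the $L^{\ell_1}$-boundedness of the wave operators of \cite{DMSY18} combined with the characterizations of Theorems~\ref{t.2.2}--\ref{t.2.1}) to control both $\|\Phi(u)\|_{L^\infty_t H^s_\alpha}$ and $\|\Phi(u)\|_{L^{q_1}_t H^{s,\ell_1}_\alpha}$ by $\|u_0\|_{H^s_\alpha}$ plus the dual-Strichartz norm $\|u|u|^{p-1}\|_{L^{q_1'}_t H^{s,\ell_1'}_\alpha}$ of the nonlinearity.

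For the nonlinear estimate I would apply Corollary~\ref{c.K2} with $\ell := \ell_1' \in (3/2,2]$, which yields
$$\|u|u|^{p-1}\|_{H^{s,\ell}(\mathbb{R}^3)} \leq C \|u\|^p_{H^{s,\ell_1}_\alpha(\mathbb{R}^3)}.$$
I would further choose $\ell_1$ close enough to the endpoint so that $s < 3/\ell - 1$; since $s<1$, such a choice is available. Under this condition Theorem~\ref{t.2.2} yields $H^{s,\ell}(\mathbb{R}^3) = H^{s,\ell}_\alpha(\mathbb{R}^3)$ with equivalent norms, so the previous estimate feeds directly into the dual-Strichartz right-hand side. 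A Hölder-in-time estimate, together with the automatic gap $p q_1' < q_1$ (which follows from $q_1 > 4$ and $p < 2$), produces
$$\|u|u|^{p-1}\|_{L^{q_1'}_t H^{s,\ell}_\alpha} \leq C\, T^\theta\, \|u\|^p_{L^{q_1}_t H^{s,\ell_1}_\alpha}$$
for some $\theta > 0$, so for $T = T(R,p)$ small enough $\Phi$ sends a ball of radius $\sim R$ of $X_T$ into itself.

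To close the fixed point, I would establish the corresponding Lipschitz-type bound
$$\|u|u|^{p-1} - v|v|^{p-1}\|_{H^{s,\ell}(\mathbb{R}^3)} \leq C\bigl(\|u\|^{p-1}_{H^{s,\ell_1}_\alpha} + \|v\|^{p-1}_{H^{s,\ell_1}_\alpha}\bigr)\|u-v\|_{H^{s,\ell_1}_\alpha},$$
argued in parallel with Corollary~\ref{c.K2}. The main obstacle lies precisely in this difference estimate: for $p\in(1,2)$ the map $z\mapsto z|z|^{p-1}$ is only Hölder continuous of exponent $p-1$ at the origin, so neither the fractional Leibniz rule nor the standard fractional chain rule applies. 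The role of the hypothesis $p+s<2$ is to keep the transferred fractional regularity below the available Hölder exponent, allowing a Moser/paraproduct-type argument to succeed; in the perturbed scale one must combine this with the decomposition $\phi = g + \lambda\,\mathbb{G}_\omega$ of Theorems~\ref{t.2.2}--\ref{t.23A1}, treating the regular and singular contributions separately. Once the Lipschitz bound is in place, a Hölder-in-time estimate yields a contraction constant of the form $C T^\theta R^{p-1}$, which is made strictly less than $1$ by shrinking $T$; the Banach fixed point theorem then furnishes the unique mild solution in $X_T$.
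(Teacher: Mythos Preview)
Your self-map argument is essentially the paper's: choose $(\ell,\ell_1)$ via Corollary~\ref{c.K2}, apply Strichartz at the $H^{s,\cdot}_\alpha$ level, use Theorem~\ref{t.2.2} to identify $H^{s,\ell}_\alpha=H^{s,\ell}$ for $s<3/\ell-1$, feed in \eqref{eq.ssce6}, and gain a power of $T$ by H\"older in time. Your particular choice $\ell=\ell_1'$ is more restrictive than the paper's (which picks $\ell$ and $\ell_1$ independently, with separate dual admissible pair $(m,\ell)$ satisfying $2/m+3/\ell=7/2$), but it can be made to work for $\ell_1$ close to $3$.

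The gap is the contraction step. The Lipschitz bound
\[
\|u|u|^{p-1}-v|v|^{p-1}\|_{H^{s,\ell}} \lesssim (\|u\|_{H^{s,\ell_1}_\alpha}^{p-1}+\|v\|_{H^{s,\ell_1}_\alpha}^{p-1})\|u-v\|_{H^{s,\ell_1}_\alpha}
\]
is not proved in the paper and is not known to follow from $p+s<2$; your heuristic that this condition ``keeps the transferred fractional regularity below the available H\"older exponent'' would require something like $s<p-1$, which $p+s<2$ does \emph{not} imply (e.g.\ $p=1.2$, $s=0.7$). For $p<2$ the derivative of $z\mapsto z|z|^{p-1}$ is only $(p-1)$-H\"older, so differences at the $H^s$ level pick up a term of size $|u-v|^{p-1}$ rather than $|u-v|$, and no standard paraproduct argument upgrades this to Lipschitz. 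The paper does not attempt this: after the self-map estimate it invokes ``the contraction argument of Kato \cite{K95}'' (cf.\ Section~4.4 in \cite{Cazenave}), i.e.\ one keeps the ball in the strong space $\mathcal{Y}_s$ but proves the contraction in the \emph{weaker} derivative-free Strichartz metric $L^{q}(0,T)L^{r}$, where the pointwise inequality $|f(u)-f(v)|\lesssim(|u|^{p-1}+|v|^{p-1})|u-v|$ suffices. That is the missing ingredient in your plan.
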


}

\section{Preliminary results}\label{s.Preliminary}
{This section is dedicated to properties and inequalities regarding the Green function $\mathbb{G}_\omega$ defined in \eqref{eq.defGlambda}, the independence of the domain $\mathcal{D}(-\Delta_\alpha)$ from the parameter $\omega$. Finally we define fractional operators on Banach spaces.} 

Owing to \eqref{eq.HelmholtzDelta}, combined with the Fourier transform it can be seen that the regularity
of $ \mathbb G_{\omega}$ is weaker than $H^2$
\begin{equation}\label{eq.regularityG L2}
  \mathbb G_{\omega} \in
    H^{\frac{d}{2}-d+2-\varepsilon} \setminus H^{\frac{d}{2}-d+2}, \ d = 2,3
\end{equation}
for any $\varepsilon \in (0,1].$ We note that both terms in the decomposition of $\phi$ in \eqref{eq.op_dom} are in $L^2$.

The results in \cite{AH81} guarantee the following statements:
\begin{itemize}
  \item The set $\mathcal{D}(-\Delta_\alpha)$ is exactly the domain of the operator:
  \begin{equation}\label{eq.h26}
      \mathcal{D}(-\Delta_\alpha) =  \left\{ \phi \in L^2; \phi = (\omega-\Delta_\alpha)^{-1}f, f \in L^2{(\mathbb{R}^d)}   \right\};
  \end{equation}
    \item the operator $\Delta_\alpha$ is self-adjoint, its spectrum consists of absolutely continuous part $(-\infty,0]$ and it has point eigenvalue at $\omega_0$ determined by $\alpha + c(\omega_0)=0;$
        \item the domain and the action are independent of the choice of $\omega,$ satisfying \eqref{eq.2d1};
    \item the resolvent identity  is explicit :
\begin{equation}\label{eq.res_formula}
(-\Delta_\alpha+\omega)^{-1} f\;=\;(-\Delta+\omega)^{-1}f + \frac{1}{{\alpha}+c(\omega) }\mathbb{G}_\omega \langle f, \mathbb{G}_\omega \rangle ,\end{equation}
 where $\langle f, g \rangle$ denotes the standard inner product in $L^2$ i.e.
\begin{equation}
    \langle f, g \rangle = \int_{\mathbb{R}^2} f(x) \overline{g(x)} dx.
\end{equation}
\end{itemize}
Identity \eqref{eq.res_formula} says that the resolvent of $-\Delta_\alpha$ is a rank-one perturbation of the free resolvent.

We give  the proof { of the} third point for completeness. Fixing for the moment the parameters $\omega$  in \eqref{eq.op_dom} it is easy to see that $g$ is unique.

\begin{Proposition}\label{l.wun0}
    If $\omega$ satisfies \eqref{eq.2d1} and $\phi \in \mathcal{D}(-\Delta_\alpha)$ has two representations
    \begin{equation} \label{phdd32}
    \begin{aligned}
        & \phi=g_1+C_1\,\mathbb{G}_{\omega}, \\
         & \phi=g_2+C_2\,\mathbb{G}_{\omega},
    \end{aligned}
\end{equation}
where $g_1,g_2 \in H^{2}$ and $C_1,C_2 $ are complex constants, then
\begin{equation}
   g_1=g_2,
\end{equation}
and in particular $g_1(0)=g_2(0).$
\end{Proposition}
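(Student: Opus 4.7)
The plan is to exploit the regularity gap between the regular and singular components. Subtracting the two representations in \eqref{phdd32} gives
\begin{equation}
g_1 - g_2 = (C_2 - C_1)\,\mathbb{G}_\omega,
\end{equation}
so the question reduces to showing that the only way an $H^2$ function can be a scalar multiple of $\mathbb{G}_\omega$ is the trivial way.

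First I would argue that $g_1 - g_2 \in H^2(\mathbb{R}^d)$ by linearity, and then invoke \eqref{eq.regularityG L2}, which says that $\mathbb{G}_\omega \in H^{d/2-d+2-\varepsilon} \setminus H^{d/2-d+2}$ for $d=2,3$. In particular $\mathbb{G}_\omega \notin H^2(\mathbb{R}^d)$ in either dimension. Therefore, if $C_2 - C_1 \neq 0$, the identity above would force $\mathbb{G}_\omega = (C_2-C_1)^{-1}(g_1-g_2) \in H^2(\mathbb{R}^d)$, a contradiction. Hence $C_1 = C_2$, which in turn yields $g_1 = g_2$.

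For the last assertion, I would note that by Sobolev embedding $H^2(\mathbb{R}^d) \hookrightarrow C(\mathbb{R}^d)$ in dimensions $d=2,3$ (since $2 > d/2$), so the pointwise value at the origin is well defined for any element of $H^2(\mathbb{R}^d)$. The equality $g_1 = g_2$ then trivially gives $g_1(0) = g_2(0)$.

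There is no real obstacle here; the whole argument hinges on the sharp regularity statement \eqref{eq.regularityG L2}, which is already established. The only point to be careful about is to explicitly invoke the Sobolev embedding to make sense of the pointwise value $g_i(0)$, since this value will play a role in the operator action \eqref{eq.opaction} and in the characterizations of Theorems \ref{t.23A1} and \ref{t.2.1}.
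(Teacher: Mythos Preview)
Your proposal is correct and follows exactly the same approach as the paper: subtract the two representations, use that $g_1-g_2\in H^2(\mathbb{R}^d)$ while $\mathbb{G}_\omega\notin H^2(\mathbb{R}^d)$ by \eqref{eq.regularityG L2}, and conclude $C_1=C_2$, hence $g_1=g_2$. Your additional remark about the Sobolev embedding $H^2(\mathbb{R}^d)\hookrightarrow C(\mathbb{R}^d)$ to justify the pointwise value $g_i(0)$ is a welcome bit of extra care that the paper leaves implicit.
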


\begin{proof}
    Assume \eqref{phdd32} holds.
    Then we can write
    $$g_1-g_2={(C_2-C_1)}\mathbb{G}_\omega.$$
    Now we use \eqref{eq.regularityG L2} and the fact that $g_1-g_2\in H^2{(\mathbb{R}^d)}$, but {$\mathbb{G}_\omega\notin H^{2}(\mathbb{R}^d)$}, so we get $ C_1=C_2$ and $g_1=g_2$.
\end{proof}
For the next {proposition} we need a precise asymptotic expansion of $\mathbb{G}_\omega$ near zero:
\begin{equation}\label{eq.taylor0}
    \mathbb{G}_\omega(x)=
    \left\{\begin{aligned}
        &-\frac{1}{2\pi}\ln(|x|)-c(\omega)+o(|x|), &d=2,\\
        &\frac{1}{4\pi|x|}-c(\omega)+O(|x|), &d=3,
    \end{aligned}
    \right. \ \text{ for } x\to 0,
\end{equation}
{with $c(\omega)$ defined in \eqref{eq.defc},} and the fact that the difference of two Green functions is more regular. Thanks to \eqref{eq.HelmholtzDelta}, we have indeed
\begin{equation}\label{eq.diff2G}
    (\omega_1-\Delta)(\mathbb{G}_{\omega_1}-\mathbb{G}_{\omega_2})={(\omega_2-\omega_1)}\mathbb{G}_{\omega_2},
\end{equation}
that, combined with \eqref{eq.regularityG L2}, gives
\begin{equation}\label{eq.regularityG-G L2}
  \mathbb G_{\omega_1}-\mathbb{G}_{\omega_2} \in
    H^{\frac{d}{2}-d+4-\varepsilon} \setminus H^{\frac{d}{2}-d+4}, \ d = 2,3.
\end{equation}
In particular the difference of two Green functions $ \mathbb G_{\omega_1}-\mathbb{G}_{\omega_2}\in H^2$ and can be absorbed in the regular part.
\begin{Proposition}\label{l.ind2}
    The definition \eqref{eq.op_dom} of the operator domain $\mathcal{D}(-\Delta_\alpha)$  as well the definition \eqref{eq.opaction} of the operator action of $-\Delta_\alpha$ are independent of $\omega$ satisfying \eqref{eq.2d1}.
\end{Proposition}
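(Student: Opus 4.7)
The plan is to take two parameters $\omega_1,\omega_2$ both satisfying \eqref{eq.2d1}, start from a representation of $\phi$ with respect to $\omega_1$, and absorb the mismatch $\mathbb{G}_{\omega_1}-\mathbb{G}_{\omega_2}$ into the regular part. Concretely, suppose
$$\phi=g_1+C_1\mathbb{G}_{\omega_1},\qquad g_1\in H^2,\quad C_1=\frac{g_1(0)}{\alpha+c(\omega_1)}.$$
Then I define the candidate regular part
$$g_2:=g_1+C_1(\mathbb{G}_{\omega_1}-\mathbb{G}_{\omega_2}),\qquad C_2:=C_1,$$
so that tautologically $\phi=g_2+C_2\mathbb{G}_{\omega_2}$. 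By \eqref{eq.regularityG-G L2}, the difference $\mathbb{G}_{\omega_1}-\mathbb{G}_{\omega_2}$ is already in $H^2$ (in fact strictly more regular), so $g_2\in H^2$. This is the crucial place where the improved regularity of the difference of two Green functions is used.

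Next I must verify the compatibility condition $C_2=g_2(0)/(\alpha+c(\omega_2))$, which is the heart of the statement. Because $g_2\in H^2$ and $d\leq 3$, Sobolev embedding gives $g_2\in C^0$, so $g_2(0)$ is well defined. The asymptotic expansion \eqref{eq.taylor0} shows that
$$(\mathbb{G}_{\omega_1}-\mathbb{G}_{\omega_2})(0)=c(\omega_2)-c(\omega_1),$$
as the singular pieces are $\omega$-independent and only the constant terms survive. Therefore
$$g_2(0)=g_1(0)+C_1\bigl(c(\omega_2)-c(\omega_1)\bigr)=C_1(\alpha+c(\omega_1))+C_1\bigl(c(\omega_2)-c(\omega_1)\bigr)=C_1(\alpha+c(\omega_2)),$$
which yields $g_2(0)/(\alpha+c(\omega_2))=C_1=C_2$, as required. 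Proposition \ref{l.wun0} applied with parameter $\omega_2$ then guarantees that $g_2$ is the unique regular part associated with $\omega_2$. By symmetry the two representations are equivalent, so the set defined in \eqref{eq.op_dom} does not depend on the choice of $\omega$.

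It remains to check that the action \eqref{eq.opaction} is the same regardless of which $\omega$ is used. Using \eqref{eq.diff2G} (or equivalently its twin $(\omega_2-\Delta)(\mathbb{G}_{\omega_1}-\mathbb{G}_{\omega_2})=(\omega_2-\omega_1)\mathbb{G}_{\omega_1}$), I compute
$$(-\Delta+\omega_2)g_2=(-\Delta+\omega_2)g_1+C_1(\omega_2-\omega_1)\mathbb{G}_{\omega_1},$$
so that
$$(-\Delta+\omega_1)g_1-(-\Delta+\omega_2)g_2=(\omega_1-\omega_2)\bigl(g_1+C_1\mathbb{G}_{\omega_1}\bigr)=(\omega_1-\omega_2)\phi.$$
Rearranging, $(-\Delta+\omega_1)g_1-\omega_1\phi=(-\Delta+\omega_2)g_2-\omega_2\phi$, which is exactly the statement that the operator $-\Delta_\alpha\phi$ is the same whether computed via $\omega_1$ or $\omega_2$.

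The main obstacle I anticipate is purely bookkeeping, namely the pointwise evaluation $(\mathbb{G}_{\omega_1}-\mathbb{G}_{\omega_2})(0)=c(\omega_2)-c(\omega_1)$: one must be careful that $\mathbb{G}_\omega$ itself is singular at the origin, and only the difference is continuous. Once the Sobolev embedding $H^2\hookrightarrow C^0$ (valid since $d\leq 3$) and the Taylor expansion \eqref{eq.taylor0} are invoked, all the remaining steps are algebraic identities driven by the resolvent-style relation \eqref{eq.diff2G}.
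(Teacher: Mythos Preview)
Your proof is correct and follows essentially the same approach as the paper: you absorb $\mathbb{G}_{\omega_1}-\mathbb{G}_{\omega_2}\in H^2$ into the regular part, use the expansion \eqref{eq.taylor0} to check the compatibility of the coefficient, and verify the action via the resolvent identity \eqref{eq.diff2G}. The only cosmetic difference is that the paper checks the action by computing $(\omega_2-\Delta_\alpha)\phi$ directly and showing it equals $(\omega_2-\Delta)g_2$, whereas you rearrange to exhibit $-\Delta_\alpha\phi$ as $\omega$-independent; the algebraic content is the same.
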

\begin{proof}
    Let $\phi$ be in the domain of $-\Delta_\alpha$,  so $\phi$ has  representation 
    \begin{equation}\label{eq.g1g29}
        \begin{aligned}
           & \phi = g_1 + \frac{g_{1}(0)}{\alpha+c(\omega_1)} \mathbb{G}_{\omega_1} , \  g_{1} \in H^2{(\mathbb{R}^d)}.
        \end{aligned}
        \end{equation}

    Let $\omega_2\neq\omega_1$ that satisfies \eqref{eq.2d1},
    then we consider the function 
    $$g_2=g_1+\frac{g_1(0)}{\alpha+c(\omega_1)}(\mathbb{G}_{\omega_1}-\mathbb{G}_{\omega_2})\in H^2{(\mathbb{R}^d)}$$
    that also satisfies
    \begin{equation}\label{eq.g2(0)}
        g_2(0)=g_1(0)+\frac{g_1(0)}{\alpha+c(\omega_1)}(-c(\omega_1)+c(\omega_2))=\frac{\alpha+c(\omega_2)}{\alpha+c(\omega_1)}g_1(0).
    \end{equation}
    Adding and subtracting $\frac{g_{1}(0)}{\alpha+c(\omega_1)} \mathbb{G}_{\omega_2}$ to \eqref{eq.g1g29}, it is immediate to see that we have the alternative representation of $\phi$
    \begin{equation}
        \begin{aligned}
           & \phi = g_2 + \frac{g_{2}(0)}{\alpha+c(\omega_2)} \mathbb{G}_{\omega_2} , \  g_{2} \in H^2{(\mathbb{R}^d)}.
        \end{aligned}
        \end{equation}
    The representation works also for the action formula \eqref{eq.opaction}. Using, \eqref{eq.g2(0)} and \eqref{eq.diff2G}, we have indeed:
    \begin{equation}
    \begin{aligned}
         (\omega_2-\Delta_\alpha)\phi&=(\omega_2-\omega_1)\phi+(\omega_1-\Delta)g_1\\
         &=\frac{g_{1}(0)}{\alpha+c(\omega_1)}(\omega_2-\omega_1)\mathbb{G}_{\omega_1}+(\omega_2-\Delta)g_1\\
         &=(\omega_2-\Delta)\left(\frac{g_{2}(0)}{\alpha+c(\omega_2)}(\mathbb{G}_{\omega_1}-\mathbb{G}_{\omega_2})+g_1\right)=(\omega_2-\Delta)g_2.
    \end{aligned} 
    \end{equation}
\end{proof}
Now we end this subsection with the following.
\begin{Proposition}
    For any $z \in \mathbb{C} \setminus (-\infty,0]$ the following properties are equivalent{.}
    \begin{enumerate}
        \item[i)] $\mathbb G_{z} \in \mathcal{D} (-\Delta_\alpha);$
    \item[ii)]  $\alpha+c(z)=0.$
    \end{enumerate}
    In this case,  we have 
    \begin{equation}
        (z-\Delta_\alpha)\mathbb{G}_{z}=0.
    \end{equation}
\end{Proposition}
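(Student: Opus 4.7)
The proof splits naturally into the two implications, followed by the verification of the eigenvalue identity.

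First, for the implication (ii) $\Rightarrow$ (i), suppose $\alpha+c(z)=0$. The idea is to pick an auxiliary parameter $\omega$ satisfying \eqref{eq.2d1} with $\omega\neq z$ (such $\omega$ exists, since $\alpha + c(\cdot)$ has at most one zero) and exhibit $\mathbb{G}_z$ in the form \eqref{eq.op_dom} with regular part $g:=\mathbb{G}_z-\mathbb{G}_\omega$. Two ingredients from the preliminaries make this work: the difference $\mathbb{G}_z-\mathbb{G}_\omega$ lies in $H^2(\mathbb{R}^d)$ by \eqref{eq.regularityG-G L2}, and the asymptotic expansion \eqref{eq.taylor0} shows that the singular terms of $\mathbb{G}_z$ and $\mathbb{G}_\omega$ at the origin are identical, so that $g(0)=c(\omega)-c(z)$. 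Combined with the hypothesis $c(z)=-\alpha$, this gives $g(0)=\alpha+c(\omega)$, which is precisely the coefficient required in \eqref{eq.op_dom}; hence the identity $\mathbb{G}_z = g + \frac{g(0)}{\alpha+c(\omega)}\mathbb{G}_\omega$ holds and $\mathbb{G}_z\in\mathcal{D}(-\Delta_\alpha)$.

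For the action, I would then apply \eqref{eq.opaction} to the decomposition just obtained and use $(-\Delta+\omega)\mathbb{G}_\omega=\delta_0$ together with $(-\Delta+z)\mathbb{G}_z=\delta_0$ from \eqref{eq.HelmholtzDelta}. The delta distributions cancel, leaving $(-\Delta_\alpha+\omega)\mathbb{G}_z=(\omega-z)\mathbb{G}_z$, which rearranges to $(z-\Delta_\alpha)\mathbb{G}_z=0$.

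For the reverse implication (i) $\Rightarrow$ (ii), suppose $\mathbb{G}_z\in\mathcal{D}(-\Delta_\alpha)$ and, arguing by contradiction, assume $\alpha+c(z)\neq 0$. Then $z$ itself is an admissible choice of $\omega$ in \eqref{eq.2d1}, so by Proposition \ref{l.ind2} the representation \eqref{eq.op_dom} at this particular $\omega = z$ furnishes $g\in H^2$ with $\mathbb{G}_z=g+\frac{g(0)}{\alpha+c(z)}\mathbb{G}_z$. Isolating $\mathbb{G}_z$ gives $\bigl(1-\tfrac{g(0)}{\alpha+c(z)}\bigr)\mathbb{G}_z=g\in H^2$, and invoking $\mathbb{G}_z\notin H^2$ from \eqref{eq.regularityG L2} forces the parenthesis to vanish, hence $g(0)=\alpha+c(z)$ and $g\equiv 0$; but then $g(0)=0$, contradicting $\alpha+c(z)\neq 0$.

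The only piece of bookkeeping worth checking carefully is the identification $g(0)=c(\omega)-c(z)$ in the first step, which rests on the fact that the genuinely singular terms in \eqref{eq.taylor0} (the $-\tfrac{1}{2\pi}\ln|x|$ and $\tfrac{1}{4\pi|x|}$ contributions) are independent of the spectral parameter and therefore cancel in the difference. Once this is in hand, the remaining steps are purely algebraic and leverage the preliminaries of this section.
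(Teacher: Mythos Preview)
Your proof is correct and follows essentially the same approach as the paper: both directions use the same key ideas (taking $g=\mathbb{G}_z-\mathbb{G}_\omega$ for (ii)$\Rightarrow$(i), and the contradiction via $\mathbb{G}_z\notin H^2$ with $\omega=z$ for (i)$\Rightarrow$(ii)). The only cosmetic difference is that for the action the paper invokes \eqref{eq.diff2G} directly, whereas you spell out the $\delta_0$ cancellation; these are equivalent.
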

\begin{proof}
{ We first prove i)$\implies$ ii).}     By contradiction, we suppose $\alpha+c(z)\neq0$. The independence of $z$ allows us to decompose
    $$\mathbb G_{z} =  g + \frac{g(0)}{\alpha+c(z)} \mathbb G_{z},$$
with a certain $g\in H^2$, that can be rewritten as
$$\left(1-\frac{g(0)}{\alpha+c(z)}\right)\mathbb G_{z}=g.$$
Since the function $\mathbb G_{z}\notin H^2{(\mathbb{R}^d)}$, this implies $g\equiv0$ and $1-\frac{g(0)}{\alpha+c(z)}=0$, but this is a contradiction because $g$ is continuous.

Otherwise, if $\alpha=-c(z)$, we want to find a certain $g\in H^2{(\mathbb{R}^d)}$ such that
$$\mathbb{G}_z=g+\frac{g(0)}{\alpha+c(\omega)}\mathbb{G}_\omega,$$
for a fixed $\omega$ that satisfies \eqref{eq.2d1}{.}
It is sufficient to consider $g=\mathbb{G}_z-\mathbb{G}_\omega$, because
$$g(0)=-c(z)+c(\omega)=\alpha+c(\omega).$$

We can finally compute the action of $-\Delta_\alpha$ on $\mathbb{G}_z$, thanks to \eqref{eq.diff2G}
\begin{equation}
    (z-\Delta_\alpha)\mathbb{G}_z=(z-\omega)\mathbb{G}_z+(\omega-\Delta)(\mathbb{G}_z-\mathbb{G}_\omega)=0.
\end{equation}
\end{proof}
\subsection{Fractional powers of operators } \label{s.def fractional}
For this subsection, we refer to the papers of Komatsu \cite{Ko66} and \cite{Ko67}.

He considered  closed linear operators $A$ on a Banach space $X$, such that their resolvent set  contains  $(-\infty,0)$  and 
\begin{equation}\label{eq.conditionKomatsu}
    \|\lambda(\lambda+A)^{-1}\|_{\mathcal{L}(X)}\leq M,\ \ \lambda>0
\end{equation}
with a constant $M>0$ independent of $\lambda$. { We denoted with $\|\cdot\|_{\mathcal{L}(X)}$ the operator norm.} In this case he proved the following formula (see  (1.3) in \cite{Ko66})

\begin{equation}\label{eq.fractionalKomatsu}
\begin{aligned}
  &  A^{\sigma} \phi = -\frac{\sin (\pi \sigma)}{\pi} \left(\int_0^R \lambda^\sigma (\lambda+A)^{-1}\phi d\lambda {-} 
      \frac{R^\sigma\phi}{\sigma} {-} \int_R^\infty \lambda^{\sigma-1} A(\lambda+A)^{-1} \phi d\lambda \right),
\end{aligned}
\end{equation}
with $\mathrm{Re} \sigma \in (-1,1)$ and for any $R>0$. We note that both integrals are well defined and finite thanks to \eqref{eq.conditionKomatsu}. His formula is more general and involves also the wider range ${\mathrm{Re} \sigma\in}(-n,n)$ for any $n\in \mathbb{N}$, but in this paper we only need  the case $n=1$.  
We can also simplify the fractional formula if we distinguish between positive and negative powers.

If $\mathrm{Re}\sigma\in(0,1)$, computing the limits $R\to0$ and $R\to \infty$ respectively, we obtain 
\begin{equation}\label{eq.fractionalKomatsu positive}
\begin{aligned}
  &  A^{\sigma} \phi = \frac{\sin (\pi \sigma)}{\pi} \int_0^\infty \lambda^{\sigma-1} A(\lambda+A)^{-1} \phi d\lambda ,
\end{aligned}
\end{equation}
and
\begin{equation}\label{eq.fractionalKomatsu negative}
\begin{aligned}
  &  A^{-\sigma} \phi = \frac{\sin (\pi \sigma)}{\pi} \int_0^\infty\lambda^{-\sigma} (\lambda+A)^{-1}\phi d\lambda,
\end{aligned}
\end{equation}
that coincide with the formulas for self-adjoint operators from spectral theorem (see for example Section 5, point C in \cite{EN2000}) .
We will resume in a Proposition the results that we will use later.
\begin{Proposition}\label{p.Komatsu}
    { Let $\alpha,\beta\in \mathbb{C}$, let $x\in\mathcal{D}(A^\beta)\cap\mathcal{D}(A^{\alpha+\beta})$. Then $A^\beta x\in \mathcal{D}(A^\alpha)$ and satisfies
    \begin{equation}
        A^\alpha A^\beta x= A^{\alpha+\beta}x .
    \end{equation}  
Moreover, if $\mathrm{Re} \sigma \in (0,1)$, then $\mathcal{D}(A^{\sigma})$ is contained and dense in $\overline{\mathcal{D}(A)}$ and the range $R(A^\sigma)$ is contained in $\overline{\mathcal{D}(A)}\cap\overline{R(A)}$. Otherwise,  $\mathcal{D}(A^{-\sigma})$ is contained and dense in $\overline{R(A)}$ and the range $R(A^{-\sigma})$ is contained in $\overline{\mathcal{D}(A)}\cap\overline{R(A)}$.}
    
\end{Proposition}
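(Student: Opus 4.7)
The final Proposition is a compendium of standard facts from Komatsu's theory of fractional powers of closed operators on Banach spaces, and my plan is to deduce each item by invoking the corresponding result in \cite{Ko66,Ko67} together with the explicit integral representations \eqref{eq.fractionalKomatsu}--\eqref{eq.fractionalKomatsu negative} already introduced. The resolvent bound \eqref{eq.conditionKomatsu} places $A$ squarely in Komatsu's setting, so the framework applies verbatim.

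For the composition identity $A^{\alpha} A^{\beta} x = A^{\alpha+\beta} x$, I would appeal to the additivity theorem of \cite{Ko66}. The heart of the argument is to substitute the Komatsu representation for $A^{\alpha}$ evaluated at the vector $A^{\beta} x$, substitute again the representation for $A^{\beta}$ applied to $x$, interchange the two $\lambda$-integrations by Fubini, and then use the resolvent identity together with a contour manipulation to collapse the resulting double integral into the single Komatsu formula for $A^{\alpha+\beta} x$. The role of the hypothesis $x \in \mathcal{D}(A^{\beta}) \cap \mathcal{D}(A^{\alpha+\beta})$ is precisely to guarantee that $A^{\beta} x$ belongs to $\mathcal{D}(A^{\alpha})$ and that no boundary contribution survives the interchange.

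For the assertions with $\mathrm{Re}\sigma \in (0,1)$, I would exploit the Yosida-type approximants $A(\lambda+A)^{-1}\phi$ and $\lambda(\lambda+A)^{-1}\phi$ that sit inside the formulas \eqref{eq.fractionalKomatsu positive} and \eqref{eq.fractionalKomatsu negative}. Since $A(\lambda+A)^{-1}\phi \in R(A)$ for every $\lambda>0$, the Bochner integral defining $A^{\sigma}\phi$ takes values in $\overline{R(A)}$; similarly $(\lambda+A)^{-1}\phi \in \mathcal{D}(A)$, so $A^{-\sigma}\phi \in \overline{\mathcal{D}(A)}$. Combining the two representations via the composition identity of the first part then places the ranges of $A^{\pm\sigma}$ inside $\overline{\mathcal{D}(A)}\cap \overline{R(A)}$. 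The density statements follow by noting that $\lambda(\lambda+A)^{-1}\phi \to \phi$ in $\overline{\mathcal{D}(A)}$ as $\lambda\to\infty$, while these approximants already lie in $\mathcal{D}(A) \subseteq \mathcal{D}(A^{\sigma})$, and symmetrically that $A(\lambda+A)^{-1}\phi\to\phi$ in $\overline{R(A)}$ as $\lambda\to 0$ with values in $\mathcal{D}(A)\subseteq \mathcal{D}(A^{-\sigma})$.

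The main technical obstacle is the Fubini step in the composition identity: the convergence of the Komatsu integrals is only conditional as $|\mathrm{Re}\sigma|\to 1$, which is exactly why the hypothesis cannot be weakened to $x \in \mathcal{D}(A^{\alpha+\beta})$ alone and why a careful regularization, either by truncating the $\lambda$-range and passing to the limit or by Yosida approximation of $A$ itself, is indispensable. The remaining range and density inclusions are then soft consequences of the integral representations together with the standard fact that the restriction of $A$ to $\overline{\mathcal{D}(A)}\cap \overline{R(A)}$ admits a well-defined functional calculus.
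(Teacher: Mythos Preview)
The paper does not prove this proposition at all: it is stated without proof as a summary of results from Komatsu \cite{Ko66,Ko67}, introduced by the sentence ``We will resume in a Proposition the results that we will use later.'' Your sketch is therefore not comparable to a proof in the paper, because there is none; the authors simply cite Komatsu and move on.

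That said, your outline is essentially the right way to recover these facts from \cite{Ko66}, and the references you invoke (the additivity theorem, the Yosida-type approximants $\lambda(\lambda+A)^{-1}$ and $A(\lambda+A)^{-1}$) are the correct ingredients. One small imprecision: for the density of $\mathcal{D}(A^{-\sigma})$ in $\overline{R(A)}$ you write that $A(\lambda+A)^{-1}\phi$ lies in $\mathcal{D}(A)\subseteq \mathcal{D}(A^{-\sigma})$, but the relevant inclusion for negative powers is rather that elements of $R(A)$ (or more precisely $\mathcal{D}(A)\cap R(A)$) lie in $\mathcal{D}(A^{-\sigma})$; the containment $\mathcal{D}(A)\subseteq \mathcal{D}(A^{-\sigma})$ is not the right one to invoke here. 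This is a minor bookkeeping point and does not affect the overall correctness of the approach, which in any case amounts to reproducing Komatsu's own arguments.
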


{If we define the Fourier transform in $L^2(\mathbb{R}^d)$ as
\begin{equation}\label{eq.Fourier}
    \mathcal{F}[f](\xi)=\widehat{f}(\xi)=(2\pi)^{-d/2}\int_{\mathbb{R}^d} e^{ix\cdot\xi }f(x)dx
\end{equation}
and the inverse Fourier transform as
\begin{equation}\label{eq.inverse Fourier}
    \mathcal{F}^{-1}[f](x)=\mathcal{F}[f](-x),
\end{equation}
we can see that the fractional Laplacian $(-\Delta)^{s/2}f$ in $L^2(\mathbb{R}^d)$, defined with Komatsu's formula, coincides with the Riesz potential $\mathcal{F}^{-1}[|\xi|^s\widehat{f}]$,
thanks to the spectral theorem applied to the following identity (see Ch. 10.4 in \cite{BS87}).
\begin{equation}
    \begin{aligned}
       & x^{s/2}=\frac{\sin{(\pi s/2)}}{\pi}\int_0^\infty\lambda^{s/2-1}\frac{x}{x+\lambda}d\lambda&x\geq0,\ s\in(0,2).
    \end{aligned}
\end{equation}

}

\subsection{Regularity of the Green function $\mathbb{G}_\omega$} \label{s.regularityG}
In this subsection we will summarize some properties of the Green function $\mathbb{G}_\omega$, the fundamental solution of $(\omega - \Delta)$ for any
\begin{equation}\label{eq.ome1}
    \omega \in \mathbb{C} \setminus \sigma(\Delta) = \mathbb{C} \setminus (-\infty,0].
\end{equation}
First of all, it can be represented, for every $d\geq2$, as
\begin{equation} \label{eq.defGgeneral}
\mathbb{G}_\omega(x) = (2\pi)^{-d/2} \omega^{(d-2)/4} |x|^{-(d-2)/2} K_{(d-2)/2}
(\sqrt{\omega} |x|).
\end{equation}
Here  $K_\nu$ is the second type modified Bessel function of order $\nu \geq 0$, also called Macdonald function. { For $\nu \neq 0$, the modified Bessel function of the second kind admits the following asymptotic expansion as $z \to 0$:
\begin{equation}\label{eq.taylor 0 K}
   \begin{aligned}
       & K_\nu(z)=z^{-\nu}\big(2^{\nu-1}\Gamma(\nu)+o(z)\big)+z^\nu\big(2^{-\nu-1}\Gamma(-\nu)+o(z)\big), &z\to0 ,
   \end{aligned}
\end{equation}
The case $\nu = 0$ is given before in the first line of equation \eqref{eq.taylor0}.
If $\nu<0$, the modified Bessel function is defined as $K_\nu=K_{-\nu}$.
The functions $K_\nu$ decay exponentially when $z\to\infty$, the asymptotic expansions  are given by:
\begin{equation}\label{eq.taylor infinity}
    \begin{aligned}
        &K_\nu(z)=\sqrt{\frac{\pi}{2z}}e^{-z}\big(1+O(z^{-1})\big), &z\to\infty.
    \end{aligned}
\end{equation}
}
We focus only on the cases $d=2,3$ in \eqref{eq.defGgeneral} and we have
$$K_{1/2}({|x|})=\sqrt{\frac{\pi}{2|x|}}e^{-|x|},$$
so, as stated  in  \eqref{eq.defGlambda}, we have the explicit formulas
\begin{equation} \label{eq.defGlambda1}
\mathbb{G}_\omega(x) =
\left\{\begin{aligned}
 & \frac{1}{2\pi} K_{0}(\sqrt{\omega} |x|), \ \ \mbox{if $d=2$,} \\
 &  \frac{e^{-\sqrt{\omega}|x|}}{4\pi|x|}, \ \ \ \mbox{if $d=3$.}
\end{aligned}\right.
\end{equation}
From \eqref{eq.defGgeneral}, we obtain the rescaling property
\begin{equation}\label{eq.rescalingG}
    \mathbb{G}_\omega(x)=\omega^{\frac{d}{2}-1}\mathbb{G}_1(\sqrt{\omega}x).
\end{equation}

Thanks to asymptotic expansions in zero \eqref{eq.taylor0}  and at infinity \eqref{eq.taylor infinity} it is is possible to verify that
\begin{equation}\label{eq.LpGreen}
    \begin{aligned}
        \mathbb{G}_\omega \in L^p(\mathbb{R}^d)  \ \ \mbox{for} \ \ 
       \left\{ \begin{aligned}
            & p \in [1,\infty) \ \ \mbox{if $d=2,$}\\
            &p \in [1,3) \ \ \mbox{if $ d=3,$}
        \end{aligned}\right.
    \end{aligned}
\end{equation}
    that, combined with \eqref{eq.rescalingG}, gives the $L^p$ rescaling
    \begin{equation}
        \|\mathbb{G}_\omega\|_{L^p{(\mathbb{R}^d)}}={|\omega|}^{\frac{d}{2}-1-\frac{d}{2p}}\|\mathbb{G}_1\|_{L^p{(\mathbb{R}^d)}}.
    \end{equation}
    We can resume this in the following Proposition
\begin{Proposition} \label{lemA3}
    If $p$ satisfies
    \begin{equation}\label{eq.conditionp}
        p \in \left\{
\begin{aligned}
    & [1,\infty) &\mbox{if $d=2,$}\\
    & [1,3) &\mbox{if $d=3,$}
\end{aligned}
        \right.
    \end{equation}
    then for any $\omega \in \mathbb{C} \setminus (-\infty,0]$ we have the estimate{}
   \begin{equation}\label{eq.estpg3}
    \left\| \mathbb{G}_\omega\right\|_{L^p(\mathbb{R}^d)} \lesssim |\omega|^{\frac{d}{2}-1-\frac{d}{2p}}
\end{equation} 

\end{Proposition}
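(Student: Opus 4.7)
The plan is to reduce to the case $\omega = 1$ using the rescaling identity \eqref{eq.rescalingG} and then verify that $\mathbb{G}_1 \in L^p(\mathbb{R}^d)$ in the stated range.

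For real $\omega > 0$ the reduction is a direct change of variables: applying $y = \sqrt{\omega}\,x$ in
$$\|\mathbb{G}_\omega\|_{L^p}^p \;=\; \omega^{p(d/2-1)} \int_{\mathbb{R}^d} |\mathbb{G}_1(\sqrt{\omega}\,x)|^p\, dx$$
produces a Jacobian $\omega^{-d/2}$, yielding
$$\|\mathbb{G}_\omega\|_{L^p} \;=\; \omega^{d/2 - 1 - d/(2p)}\, \|\mathbb{G}_1\|_{L^p},$$
which is the desired scaling. For complex $\omega \in \mathbb{C} \setminus (-\infty,0]$ I would bypass the formal substitution and work directly from the explicit formulas \eqref{eq.defGlambda1}: passing to polar coordinates $r = |x|$, the modulus $|\mathbb{G}_\omega(x)|$ is a function of $r$ alone, involving the exponential factor $e^{-\mathrm{Re}(\sqrt{\omega})\,r}$ in $d = 3$ and $|K_0(\sqrt{\omega}\,r)|$ in $d = 2$, and the radial substitution $r \mapsto r/|\sqrt{\omega}|$ reproduces the same power of $|\omega|$, up to a multiplicative factor controlled by a power of $\cos(\arg\omega/2)$.

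The core analytic input is the finiteness of $\|\mathbb{G}_1\|_{L^p}$, which I would verify by splitting $\mathbb{R}^d$ into $\{|x| \leq 1\}$ and $\{|x| > 1\}$. On the exterior region, the exponential decay from \eqref{eq.taylor infinity} gives $L^p$-integrability for every $p \in [1,\infty)$ in both dimensions. On the interior region, the behaviour at the origin is captured by \eqref{eq.taylor0}: in $d = 2$ the logarithmic singularity $|\mathbb{G}_1(x)| \lesssim 1 + |\ln|x||$ is locally in every $L^p$ with $p < \infty$, while in $d = 3$ the Coulomb singularity $|\mathbb{G}_1(x)| \lesssim |x|^{-1}$ forces $\int_{|x|\leq 1} |x|^{-p}\, dx < \infty$, i.e.\ $p < 3$, matching exactly the endpoint in \eqref{eq.conditionp}.

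The main point requiring care is the complex-$\omega$ case: the identity \eqref{eq.rescalingG} is most transparent for $\omega > 0$, and for general $\omega$ one must redo the radial computation and carefully extract the power of $|\omega|$ from the exponential factor $e^{-\mathrm{Re}(\sqrt{\omega})\,r}$, controlling the dependence on $\arg\omega$ by $\cos(\arg\omega/2)$. Beyond this bookkeeping the argument is routine, so the real content of the proposition is the sharp endpoint $p < 3$ in dimension three, which is dictated by the local $|x|^{-1}$ singularity of $\mathbb{G}_\omega$.
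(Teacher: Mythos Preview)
Your proposal is correct and follows essentially the same approach as the paper: verify $\mathbb{G}_1 \in L^p$ via the asymptotics \eqref{eq.taylor0} and \eqref{eq.taylor infinity}, then use the rescaling identity \eqref{eq.rescalingG} to extract the power of $|\omega|$. If anything you are more careful than the paper, which writes the rescaling as an exact equality $\|\mathbb{G}_\omega\|_{L^p} = |\omega|^{d/2-1-d/(2p)}\|\mathbb{G}_1\|_{L^p}$ without commenting on the complex case; your observation that for complex $\omega$ the radial substitution picks up a factor of $(\cos(\arg\omega/2))^{d/2-1-d/(2p)}$ is the right refinement, and it is harmless in the paper's applications since the estimate is only ever invoked for $\omega+t$ with $\omega$ real positive and $t>0$.
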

    Computing the gradient, the regularity can also be increased 
    \begin{equation}\label{eq.WpGreen}
    \begin{aligned}
        \mathbb{G}_\omega \in {H}^{1,p}(\mathbb{R}^d)  \ \ \mbox{for} \ \ 
       \left\{ \begin{aligned}
            & p \in [1,2)  &\mbox{if $d=2,$}\\
            &p \in [1,3/2)  &\mbox{if $ d=3,$}
        \end{aligned}\right.
    \end{aligned}
\end{equation}
and, as stated in \eqref{eq.regularityG L2}, with Fourier transform, can be verified that
\begin{equation}
  \mathbb G_{\omega} \in
    H^{\frac{d}{2}-d+2-\varepsilon} \setminus H^{\frac{d}{2}-d+2}, \ d = 2,3.
\end{equation}
{The regularity of $\mathbb{G}_1$ can be generalized to the fractional Sobolev spaces $H^{s,p}(\mathbb{R}^d)$ defined in \eqref{eq.Sobolev spaces} thanks to the explicit formulas of $(1-\Delta)^{s/2}\mathbb{G}_1$.
The relations (4.1) and (4.6) in \cite{AS61} compute an explicit formula for
\begin{equation}\label{eq.fractional G transform}
    \begin{aligned}
       & \mathcal{F}^{-1}[(2\pi)^{-d/2}(1+|\xi|^2)^{-\beta/2}]=\mathcal{G}_\beta, 
    \end{aligned}
\end{equation}
with
\begin{equation}\label{eq.fractional G}
   \begin{aligned}
       & \mathcal{G}_\beta(x)=C(d,\beta)K_{\frac{d-\beta}{2}}(|x|)|x|^{\frac{\beta-d}{2}}, &C(d,\beta)=\frac{2^{(2-\beta-d)/2}}{\pi^{d/2}\Gamma(\beta/2)},
   \end{aligned}
\end{equation}
that allows to verify
\begin{equation}\label{eq.explicit fractional G1}
    (1-\Delta)^{s/2}\mathbb{G}_1(x)=\mathcal{F}^{-1}[(2\pi)^{-d/2}(1+|\xi|^2)^{-\frac{2-s}{2}}](x)= C(d,2-s) K_{\frac{d+s-2}{2}}(|x|)|x|^{\frac{2-s-d}{2}}.
\end{equation}
Although the regularity of $\mathbb{G}_1$ can be readily derived from the asymptotic expansions \eqref{eq.taylor0}, \eqref{eq.taylor 0 K}, and \eqref{eq.taylor infinity} within \eqref{eq.explicit fractional G1}, we will instead provide an alternative proof using the fractional Komatsu formula, as this will serve as the main approach throughout the paper.

}
\begin{Lemma}
Let $s\in [0,2]$ and $p$ satisfies \eqref{eq.conditionp}, i.e.
\begin{equation}
        p \in \left\{
\begin{aligned}
    & [1,\infty) &\mbox{if $d=2,$}\\
    & [1,3) &\mbox{if $d=3.$}
\end{aligned}
        \right.
    \end{equation}
    Then
    \begin{equation}
        \mathbb{G}_1\in H^{s,p}{(\mathbb{R}^d)}\iff s<\frac{d}{p}-d+2, \ \ d=2,3.
    \end{equation}
\end{Lemma}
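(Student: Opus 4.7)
\emph{Plan.} The strategy is to apply Komatsu's formula \eqref{eq.fractionalKomatsu positive} with $A=1-\Delta$ acting on $L^p(\mathbb R^d)$, for $s\in(0,2)$. Since $\mathbb G_1$ is the fundamental solution of $1-\Delta$, the standard resolvent identity applied to $\mathbb G_1=(1-\Delta)^{-1}\delta_0$ gives
\[
(\lambda+1-\Delta)^{-1}\mathbb G_1=\tfrac{1}{\lambda}\bigl(\mathbb G_1-\mathbb G_{1+\lambda}\bigr),
\]
so that $(1-\Delta)(\lambda+1-\Delta)^{-1}\mathbb G_1=\mathbb G_{1+\lambda}$, and Komatsu yields the representation
\[
(1-\Delta)^{s/2}\mathbb G_1=\frac{\sin(\pi s/2)}{\pi}\int_0^\infty \lambda^{s/2-1}\,\mathbb G_{1+\lambda}\,d\lambda.
\]
I will use this single formula to obtain both directions.

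\emph{Sufficiency.} When $s<\tfrac{d}{p}-d+2$, Minkowski's integral inequality combined with Proposition \ref{lemA3} yields
\[
\bigl\|(1-\Delta)^{s/2}\mathbb G_1\bigr\|_{L^p(\mathbb R^d)}\lesssim \int_0^\infty \lambda^{s/2-1}(1+\lambda)^{d/2-1-d/(2p)}\,d\lambda.
\]
The integrand is integrable near $\lambda=0$ because $s>0$, and near $\lambda=\infty$ it behaves like $\lambda^{(s+d)/2-2-d/(2p)}$, integrable precisely when $s+d-d/p<2$, i.e.\ under the assumed inequality.

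\emph{Necessity.} Since the integrand of the representation is pointwise nonnegative, I will analyze the behavior at the origin of $F(x):=\int_0^\infty \lambda^{s/2-1}\mathbb G_{1+\lambda}(x)\,d\lambda$. The change of variable $\mu=(1+\lambda)|x|^2$, together with the rescaling identity \eqref{eq.rescalingG}, factors out the singular scale:
\[
F(x)=|x|^{2-s-d}\int_{|x|^2}^\infty (\mu-|x|^2)^{s/2-1}\mu^{d/2-1}\,\mathbb G_1(\sqrt{\mu}\,e)\,d\mu,
\]
where $e$ is any unit vector. By monotone convergence, as $|x|\to 0$ the integral tends to the strictly positive finite constant $\int_0^\infty \mu^{(s+d)/2-2}\mathbb G_1(\sqrt\mu\,e)\,d\mu$, whose finiteness follows from the short- and long-distance asymptotics \eqref{eq.taylor0}, \eqref{eq.taylor 0 K}, \eqref{eq.taylor infinity}. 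Hence $F(x)\simeq |x|^{2-s-d}$ near the origin, which is locally $L^p$-integrable iff $(s+d-2)p<d$, i.e.\ iff $s<\tfrac{d}{p}-d+2$.

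\emph{Endpoints and main obstacles.} The endpoint $s=0$ is Proposition \ref{lemA3} itself, and $s=2$ reduces to $(1-\Delta)\mathbb G_1=\delta_0\notin L^p$, consistent with the stated condition (which fails for every admissible $p$). The most delicate technical point is the rigorous justification of Komatsu's formula in the $L^p$ setting: one has to verify the uniform resolvent bound \eqref{eq.conditionKomatsu} for $1-\Delta$ on $L^p(\mathbb R^d)$ (which follows from the contractivity of the heat semigroup on $L^p$) and the convergence of the defining integral as a Bochner integral in $L^p$, for which the range of $p$ fixed in \eqref{eq.conditionp} together with the estimate \eqref{eq.estpg3} is exactly what is needed. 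A secondary obstacle is the interchange of limit and integration in the necessity argument, which is handled by monotone convergence on account of the nonnegativity of $\mathbb G_{1+\lambda}$.
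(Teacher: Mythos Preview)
Your sufficiency argument is identical to the paper's: both write $(1-\Delta)^{s/2}\mathbb G_1=\frac{\sin(\pi s/2)}{\pi}\int_0^\infty\lambda^{s/2-1}\mathbb G_{1+\lambda}\,d\lambda$ via Komatsu and then bound the $L^p$ norm by Minkowski and the rescaling estimate \eqref{eq.estpg3}.

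For necessity the two proofs diverge. The paper does not argue via Komatsu; it simply points to the explicit Bessel--kernel identity \eqref{eq.explicit fractional G1}, namely $(1-\Delta)^{s/2}\mathbb G_1=\mathcal G_{2-s}$, and reads off the singularity $|x|^{2-s-d}$ from the known asymptotics \eqref{eq.taylor 0 K} of $K_{(d+s-2)/2}$. Your route stays inside the Komatsu representation and exploits the positivity of $\mathbb G_{1+\lambda}$ together with the scaling substitution $\mu=(1+\lambda)|x|^2$ to extract the same $|x|^{2-s-d}$ blow-up. This is a legitimate and more self-contained alternative that avoids invoking the special-function formula \eqref{eq.fractional G}; the paper's approach is shorter once that formula is in hand.

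One small correction: the passage to the limit in your necessity step is not literally monotone convergence, since as $|x|\downarrow 0$ the factor $(\mu-|x|^2)^{s/2-1}$ \emph{decreases} (recall $s<2$) while the domain $\{\mu>|x|^2\}$ increases. For the lower bound you actually need, it suffices to observe that $(\mu-|x|^2)^{s/2-1}\ge \mu^{s/2-1}$ and restrict to $\mu\ge 1$, giving
\[
F(x)\ \ge\ |x|^{2-s-d}\int_1^\infty \mu^{(s+d)/2-2}\,\mathbb G_1(\sqrt\mu\,e)\,d\mu\ >\ 0
\]
for small $|x|$; alternatively Fatou's lemma does the job.
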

{
\begin{proof}
The operator $1-\Delta$ defined on $L^p(\mathbb{R}^d)$ satisfies the hypotheses stated in Section \ref{s.def fractional}, so we can write 
\begin{equation}
    (1-\Delta)^{s/2}f(x)=\frac{\sin{(\pi s/2)}}{\pi}\int_0^\infty\lambda^{s/2-1}(1-\Delta)(1+\lambda-\Delta)^{-1}f(x)d\lambda,
\end{equation}
for every $f\in L^p(\mathbb{R}^d)$.
In particular, we can write
\begin{equation}
    (1-\Delta)^{s/2}\mathbb{G}_1(x)=\frac{\sin{(\pi s/2)}}{\pi}\int_0^\infty\lambda^{s/2-1}\mathbb{G}_{1+\lambda}(x)d\lambda
\end{equation}
    and, using the rescaling inequality \eqref{eq.estpg3},
    \begin{equation}
        \|  (1-\Delta)^{s/2}\mathbb{G}_1\|_{L^p(\mathbb{R}^d)}\lesssim\int_0^\infty \lambda^{s/2-1} |1+\lambda|^{\frac{d}{2}-1-\frac{d}{2p}}d\lambda<\infty,
    \end{equation}
    because $0< s <\frac{d}{p}-d+2$.
\end{proof}
}

We conclude this subsection with a pointwise estimate o{f} the fractional derivative of the Green function, { extending Lemma 2.1 in \cite{MOS18} for negative $s$ and $d=2$}.

Choose $\Psi{\in C^\infty_0([0,\infty))}$ to be  a smooth {non-negative} function, such that
\begin{equation}
    \Psi(r) =
  \left\{  \begin{aligned}
       & 1, \ \ \mbox{if} \ \ r<1;\\
       & 0, \ \ \mbox{if} \ \ r>2.
    \end{aligned}\right.
\end{equation}

\begin{Lemma}\label{l.dsgr06}
{ Let $d=2,3$.} For any $s \in ({2-d},2)$ and any {cut-off} radial function $\Psi(|x|)$ that is $1$ near the origin we have 
    \begin{eqnarray}
    |(-\Delta)^{s/2}  \mathbb{G}_1(x)| \lesssim  \Psi(|x|)|x|^{-d+2-s} + h_s(x), 
\end{eqnarray}
where $h_s(x)\in H^1 \cap C^2$ decays as  $|x|^{-d-s}$ at infinity and it is {constant} near the origin. 
{For any $s \in (-2,2-d]$,  we have 
    \begin{eqnarray}
    |(-\Delta)^{s/2}  \mathbb{G}_1(x)| \lesssim   h_s(x), 
\end{eqnarray}
where $h_s(x)\in H^1 \cap C^2$ decays as  $|x|^{-d-s}$ at infinity and it is constant near the origin.}

\end{Lemma}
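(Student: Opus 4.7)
The plan is to use Komatsu's fractional-power formula to express $(-\Delta)^{s/2}\mathbb{G}_1$ as a single one-dimensional integral, from which the singular behavior near the origin and the polynomial decay at infinity can be read off directly.

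\emph{Step 1 (integral representation).} I would apply \eqref{eq.fractionalKomatsu positive} with $A=-\Delta$ for $s\in(0,2)$ (and \eqref{eq.fractionalKomatsu negative} for $s\in(-2,0)$) to $\mathbb{G}_1=(1-\Delta)^{-1}\delta_0$. The first resolvent identity gives $(\lambda-\Delta)^{-1}\mathbb{G}_1=(\mathbb{G}_1-\mathbb{G}_\lambda)/(\lambda-1)$, and consequently
\[
(-\Delta)(\lambda-\Delta)^{-1}\mathbb{G}_1=\frac{\lambda\mathbb{G}_\lambda-\mathbb{G}_1}{\lambda-1}.
\]
Using the scaling $\mathbb{G}_\lambda(x)=\lambda^{(d-2)/2}\mathbb{G}_1(\sqrt{\lambda}\,x)$ and the substitution $u=\sqrt{\lambda}\,|x|$, I expect to arrive at a representation of the form
\[
(-\Delta)^{s/2}\mathbb{G}_1(x)=C_s\,|x|^{2-d-s}\int_0^\infty\frac{u^{s-1}\bigl(h(u)-h(|x|)\bigr)}{u^2-|x|^2}\,du,
\]
where $h(r)=r^d\,G(r)$ with $G(r)=\mathbb{G}_1(x)|_{|x|=r}$ (with an analogous expression for negative $s$). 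From the expansions \eqref{eq.taylor0}, \eqref{eq.taylor 0 K}, \eqref{eq.taylor infinity}, the function $h$ is continuous on $[0,\infty)$ with $h(0)=0$, smooth on $(0,\infty)$, and decays exponentially at infinity.

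\emph{Step 2 (near-origin singularity).} As $|x|\to 0^+$, the inner integral converges to the finite constant $\int_0^\infty u^{s-3}h(u)\,du$ (the convergence at $u=0$ being ensured by $h(u)\sim u^2$ there). Multiplied by the prefactor, this produces the singular estimate $\Psi(|x|)|x|^{2-d-s}$ in the range $s\in(2-d,2)$. When $s\in(-2,2-d]$ the exponent $2-d-s$ is non-negative, so the whole quantity stays bounded (in fact vanishing) near the origin and can be absorbed into $h_s$.

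\emph{Step 3 (decay at infinity and construction of $h_s$).} For $|x|\to\infty$, $h(|x|)$ is exponentially small, and a geometric expansion of $(u^2-|x|^2)^{-1}$ combined with the exponential concentration of $h$ near $u=O(1)$ yields
\[
\int_0^\infty\frac{u^{s-1}(h(u)-h(|x|))}{u^2-|x|^2}\,du=-\frac{1}{|x|^2}\int_0^\infty u^{s-1}h(u)\,du+O(|x|^{-4}),
\]
which, multiplied by the prefactor $|x|^{2-d-s}$, gives the claimed $|x|^{-d-s}$ decay of $(-\Delta)^{s/2}\mathbb{G}_1$. One then takes $h_s$ to be any smooth function equal to a constant in a neighborhood of the origin and decaying like $|x|^{-d-s}$ at infinity (for instance $h_s(x)=C_1\Psi(|x|)+C_2(1-\Psi(|x|))|x|^{-d-s}$), which manifestly lies in $H^1\cap C^2$ and dominates the portion of $(-\Delta)^{s/2}\mathbb{G}_1$ not captured by the singular term $\Psi(|x|)|x|^{2-d-s}$.

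The main technical obstacle I foresee is extracting the sharp $|x|^{-2}$ leading behavior in Step 3: the removable singularity at $u=|x|$ of the integrand prevents a uniform geometric expansion, so one must decompose the domain of integration as $(0,|x|/2)\cup(|x|/2,2|x|)\cup(2|x|,\infty)$ and argue separately on each piece, using the exponential decay of $h$ to handle the intermediate interval and to justify the cancellation that produces the extra $|x|^{-2}$ factor.
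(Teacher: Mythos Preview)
Your approach is correct and takes a genuinely different route from the paper. The paper works entirely on the Fourier side: for small $|x|$ it writes
\[
\frac{|\xi|^s}{1+|\xi|^2}=\frac{(1+|\xi|^2)^{s/2}}{1+|\xi|^2}-\frac{s}{2}\frac{(1+|\xi|^2)^{s/2-1}}{1+|\xi|^2}+\text{(integrable remainder)},
\]
inverts the first two terms explicitly through the Bessel kernels $\mathcal{G}_{2-s}$ and $\mathcal{G}_{4-s}$, and reads off the $|x|^{2-d-s}$ singularity from the tabulated asymptotics \eqref{eq.taylor 0 K}; for large $|x|$ it uses an oscillatory-integral argument with a moving cutoff $\Psi(|\xi||x|)$ and integration by parts. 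Your argument, by contrast, stays in physical space: the Komatsu representation together with the resolvent identity collapses everything to a single radial integral in which both asymptotic regimes are visible simultaneously. This is more elementary (no Bessel tables, no stationary phase) and makes the $|x|^{-d-s}$ decay at infinity transparent, whereas in the paper the sharp decay is only obtained after an additional integration-by-parts remark. The price you pay is the removable singularity at $u=|x|$, which indeed forces the three-interval decomposition you anticipate in Step~3; this is routine once written out. Two small corrections: for $d=2$ one has $h(u)\sim -\tfrac{1}{2\pi}u^2\ln u$ rather than $u^2$, which does not change the integrability conclusion; and since $\mathbb{G}_1\notin\mathcal{D}(-\Delta)$ in $L^p$, you should justify the pointwise validity of the Komatsu formula for $x\neq 0$ (e.g.\ by approximating $\mathbb{G}_1$ by $\mathbb{G}_1-\mathbb{G}_\mu$ with $\mu\to 0^+$, or by checking directly that both sides have the same Fourier transform).
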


\begin{proof}
{ We use the representation 
\begin{equation}
    \begin{aligned}
       (-\Delta)^{s/2}  \mathbb{G}_1(x) = \mathcal{F}^{-1} \left[ (2\pi)^{-d/2}\frac{|\xi|^s}{1+|\xi|^2}\right] (x),
    \end{aligned}
\end{equation}
where $\mathcal{F}$ and $\mathcal{F}^{-1}$ denote the Fourier transform and the inverse Fourier transform respectively, defined in \eqref{eq.Fourier} and \eqref{eq.inverse Fourier}. 
For $x$ bounded we use the relations
\begin{equation}
    \begin{aligned}
          &  \mathcal{F}^{-1} \left[ \frac{(2\pi)^{-d/2}|\xi|^2}{1+|\xi|^2}\right] (x) = J_1(x) + J_2(x)+J_3(x),  \\
            &J_1(x) = \mathcal{F}^{-1} \left[ (2\pi)^{-d/2}\frac{(1+|\xi|^2)^{s/2}}{1+|\xi|^2}\right] (x) ,\\
            &J_2(x)= -\frac{s}{2}\mathcal{F}^{-1} \left[ (2\pi)^{-d/2}\frac{(1+|\xi|^2)^{s/2-1}}{1+|\xi|^2}\right] (x),\\
           &J_3(x)=\mathcal{F}^{-1} \left[(2\pi)^{-d/2} \frac{|\xi|^s-(1+|\xi|^2)^{s/2}+s/2(1+|\xi|^2)^{s/2-1}}{1+|\xi|^2}\right](x) .
    \end{aligned}
\end{equation}
From \eqref{eq.fractional G transform}, because $s<2$, we can compute explicitly the inverse Fourier transforms
\begin{equation}
    \begin{aligned}
        &J_1(x) = \mathcal{F}^{-1} \left[ \frac{(2\pi)^{-d/2}}{(1+|\xi|^2)^{1-s/2}}\right] (x) = C(d,2-s)K_{\frac{d-2+s}{2}}(|x|)|x|^{\frac{2-d-s}{2}},\\
            &J_2(x)= -\frac{s}{2}\mathcal{F}^{-1} \left[ \frac{(2\pi)^{-d/2}}{(1+|\xi|^2)^{2-s/2}}\right] (x) = -\frac{s}{2}C(d,4-s)K_{\frac{d-4+s}{2}}(|x|)|x|^{\frac{4-d-s}{2}},\\
    \end{aligned}
\end{equation}
From the asymptotic expansions of the modified Bessel \eqref{eq.taylor 0 K}, for $x$ near zero, we have
\begin{equation}
   |J_1(x)\Psi(|x|)|\lesssim\left\{ \begin{aligned}
        &1, &s<d-2,\\
        &|x|^{2-s-d}, &s\geq d-2,
    \end{aligned}\right.
\end{equation}
and
\begin{equation}
   |J_2(x)\Psi(|x|)|\lesssim\left\{ \begin{aligned}
        &1, &s<d-4,\\
        &|x|^{4-s-d}, &s\geq d-4,
    \end{aligned}\right.
\end{equation}
so
\begin{equation}
   |(J_1(x)+J_2(x))\Psi(|x|)|\lesssim\left\{ \begin{aligned}
        &1, &s<d-2,\\
        &|x|^{2-s-d}, &s\geq d-2.
    \end{aligned}\right.
\end{equation}
Meanwhile 
\begin{equation}
   \begin{aligned}
        |J_3(0)|&\leq \int_{\mathbb{R}^d}\frac{\big||\xi|^s-(1+|\xi|^2)^{s/2}+s/2(1+|\xi|^2)^{s/2-1}\big|}{1+|\xi|^2}d\xi\\
        &\lesssim 1 + \int_{|\xi|\geq1}\frac{|\xi|^{s-4}}{1+|\xi|^2}d\xi \lesssim 1,
   \end{aligned}
\end{equation}
because $s<6-d$.

For $x$ large we use the stationary phase method. We consider $2-d<s<2$, the other case is simpler.
\begin{equation*}
    (-\Delta)^{s/2}\mathbb{G}_1(x)=\int_{\mathbb{R}^d}e^{ix\xi}\frac{|\xi|^s}{1+|\xi|^2}d\xi= I_1(x)+I_2(x),
\end{equation*}
with
\begin{equation*}
    \begin{aligned}
        I_1(x)&= \int_{\mathbb{R}^d}e^{ix\xi}\Psi(|\xi||x|)\frac{|\xi|^s}{1+|\xi|^2}d\xi,\\
        I_2(x)&=\int_{\mathbb{R}^d}e^{ix\xi}\big(1-\Psi(|\xi||x|)\big)\frac{|\xi|^s}
        {1+|\xi|^2}d\xi,
    \end{aligned}
\end{equation*}
for some $\Psi\in C^\infty([0,\infty))$ smooth compactly supported function such that $\Psi(x)=1$ near $x=0$.

For the first integral, we have
\begin{equation*}
    |I_1(x)|\leq \int_{|\xi|\leq\frac{C}{|x|}} \frac{|\xi|^s}{1+|\xi|^2}d\xi\leq C\int_0^\frac{C}{|x|}\rho^{s-2+d-1}d\rho=C|x|^{2-s-d},
\end{equation*}
and the integral is finite because $s>2-d$.
For the second integral we integrate by parts by means of the operator $(x/|x|^{-2},\nabla_\xi)$ $K$ times, obtaining
\begin{equation*}
    |I_2(x)|\leq \frac{C}{|x|^K}\int_{|\xi|\geq\frac{C}{|x|}} |q(x,\xi)| d\xi,
\end{equation*}
where
\begin{equation*}
    q(x,\xi)=\sum_{|\alpha|=K}\partial_\xi^\alpha\left(\big(1-\Psi(|x||\xi|)\big)\frac{|\xi|^s}{1+|\xi|^2}\right).
\end{equation*}
Since
\begin{equation*}
    |q(x,\xi)|\leq\frac{C(1+|\xi|)^{s-2}}{|\xi|^K},
\end{equation*}
choosing $K>s+d-2$, we get 
\begin{equation*}
    |I_2(x)|\leq \frac{C}{|x|^K}\int_{\frac{C}{|x|}}^\infty\frac{(1+\rho)^{s-2}}{\rho^K} \rho^{d-1}d\rho\leq \frac{C}{|x|^{d+s-2}},
\end{equation*}
because $s<2$.
Integrating by parts by means of the operator
$$(1+|z|^2)^{-1}(1+\Delta_\xi),$$
the decay factor $(1+|x|^2)^{M}$ can be obtained, for every  integer $M\geq1$.
}
\end{proof}

\section{$L^p$ domain and resolvent of $\Delta_\alpha$}\label{s.Lp}
In this section we want to extend the operator $-\Delta_\alpha$ to the $L^p$ spaces,  modifying slightly the definition \eqref{eq.op_dom}, associated with the $L^2$ setting. In the $L^p$ case we choose
$p$ so that
\begin{equation}\label{eq,aspq4}
    p \in \left\{
    \begin{aligned}
        &(1,\infty), \ \ \mbox{if $d=2,$} \\
        & (3/2,3) , \ \ \mbox{if $d=3.$}
    \end{aligned} \right.
\end{equation}
These intervals are the widest as possible, because they are the intersections of intervals of $p$ where these two conditions hold:
\begin{equation}
    \begin{aligned}
        &\mathbb{G}_\omega\in L^p(\mathbb{R}^d), &{H^{2,p}(\mathbb{R}^d)}\hookrightarrow L^{\infty}(\mathbb{R}^d).
    \end{aligned}
\end{equation}
We consider $\omega$ that satisfies \eqref{eq.2d1}, i.e.
\begin{equation}\label{eq.2d436}
\begin{aligned}
& \omega \in \mathbb{C} \setminus \sigma(\Delta_\alpha)  \\
    &\sigma(\Delta_\alpha) =  (-\infty,0] \cup \{E_\alpha\} =
     (-\infty,0] \cup \{\omega;  \alpha + c(\omega) = 0\}
\end{aligned}
\end{equation}
and we define the corresponding domain of $\Delta_\alpha$ as follows
\begin{equation}\label{eq.dp4}
    \mathcal{D}_p(\Delta_\alpha)\;=\;\Big\{\phi\in L^p(\mathbb{R}^d)\,\Big|\,\phi=g+\frac{ g(0)}{\alpha + c(\omega)}\,\mathbb{G}_{\omega}\textrm{ with } g = g_{\alpha,\omega}\in H^{2,p}(\mathbb{R}^d)\Big\}
\end{equation}
that we will denote with $H^{2,p}_\alpha(\mathbb{R}^d).$ 
The action of $\omega-\Delta_\alpha$ is defined by
\begin{equation}\label{eq.phg31}
    (\omega-\Delta_\alpha) \phi = (\omega-\Delta)g.
\end{equation}

It is not difficult to prove the following.
\begin{Lemma}\label{l.propH2pa}
  We have the following properties of $-\Delta_\alpha$:
\begin{enumerate}
\item[a)] the domain
$$  {H^{2,p}_\alpha(\mathbb{R}^d):=\mathcal{D}_p(\Delta_\alpha) } $$
is dense in $L^p{(\mathbb{R}^d)};$

\item[b)] the operator
\begin{equation}
(-\Delta_\alpha) : \mathcal{D}_p(\Delta_\alpha) \to L^p{(\mathbb{R}^d)}
\end{equation}
is a closed operator

    \item[c)] the domain and the action of $-\Delta_\alpha$ are independent of the choice of
$\omega$ satisfying  \eqref{eq.2d436};

.
\end{enumerate}
\end{Lemma}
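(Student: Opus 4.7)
The first step, before addressing (a)--(c), is to note that for $p$ in the range \eqref{eq,aspq4} one has the Sobolev embedding $H^{2,p}(\mathbb R^d)\hookrightarrow L^\infty(\mathbb R^d)$ (since $p>d/2$), so the pointwise value $g(0)$ in the definition \eqref{eq.dp4} makes sense, and moreover $\mathbb G_\omega\notin L^\infty(\mathbb R^d)$ because of the logarithmic/$|x|^{-1}$ singularity at the origin. Exactly as in Proposition~\ref{l.wun0}, this gives uniqueness of the decomposition $\phi=g+\frac{g(0)}{\alpha+c(\omega)}\mathbb G_\omega$ in $\mathcal D_p(\Delta_\alpha)$: if two such representations agree in $L^p$, then their difference $g_1-g_2=(C_2-C_1)\mathbb G_\omega$ belongs to $L^\infty$, forcing $C_1=C_2$ and hence $g_1=g_2$. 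Consequently the action \eqref{eq.phg31} is well defined.

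For (c), the plan is to mimic Proposition~\ref{l.ind2} verbatim, the only new ingredient being that $\mathbb G_{\omega_1}-\mathbb G_{\omega_2}\in H^{2,p}(\mathbb R^d)$. This follows from \eqref{eq.diff2G}, which reads $(\omega_1-\Delta)(\mathbb G_{\omega_1}-\mathbb G_{\omega_2})=(\omega_2-\omega_1)\mathbb G_{\omega_2}$; since $\mathbb G_{\omega_2}\in L^p(\mathbb R^d)$ by Proposition~\ref{lemA3} and $(\omega_1-\Delta)^{-1}\colon L^p\to H^{2,p}$ is bounded (standard Bessel-potential theory), we conclude $\mathbb G_{\omega_1}-\mathbb G_{\omega_2}\in H^{2,p}$. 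With this in hand, given a representation $\phi=g_1+\frac{g_1(0)}{\alpha+c(\omega_1)}\mathbb G_{\omega_1}$ we define $g_2:=g_1+\frac{g_1(0)}{\alpha+c(\omega_1)}(\mathbb G_{\omega_1}-\mathbb G_{\omega_2})\in H^{2,p}$, check $g_2(0)=\frac{\alpha+c(\omega_2)}{\alpha+c(\omega_1)}g_1(0)$ and verify $\phi=g_2+\frac{g_2(0)}{\alpha+c(\omega_2)}\mathbb G_{\omega_2}$. The action also coincides by the same computation as in Proposition~\ref{l.ind2}.

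For (a), the density is immediate once one observes that $\mathcal D_p(\Delta_\alpha)$ contains the subspace $\{g\in H^{2,p}(\mathbb R^d):g(0)=0\}$ (take such a $g$ and the decomposition reduces to $\phi=g$). This subspace already contains $C^\infty_0(\mathbb R^d\setminus\{0\})$, which is dense in $L^p(\mathbb R^d)$ for $d\geq 2$, so density follows.

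For (b), the plan is standard: suppose $\phi_n=g_n+\frac{g_n(0)}{\alpha+c(\omega)}\mathbb G_\omega\to\phi$ in $L^p$ with $(\omega-\Delta_\alpha)\phi_n=(\omega-\Delta)g_n\to\psi$ in $L^p$. Since $(\omega-\Delta)^{-1}\colon L^p\to H^{2,p}$ is bounded, $g_n\to g:=(\omega-\Delta)^{-1}\psi$ in $H^{2,p}$; by the embedding $H^{2,p}\hookrightarrow L^\infty$ we have $g_n(0)\to g(0)$, so the singular parts converge in $L^p$ as well. Passing to the limit gives $\phi=g+\frac{g(0)}{\alpha+c(\omega)}\mathbb G_\omega\in\mathcal D_p(\Delta_\alpha)$ and $(\omega-\Delta_\alpha)\phi=(\omega-\Delta)g=\psi$. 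The only mild subtlety --- and what I expect to be the main technical point to double-check --- is precisely this coupling between the $L^p$-convergence of the full $\phi_n$ and the $H^{2,p}$-convergence of the regular parts $g_n$, which is why the Sobolev embedding $H^{2,p}\hookrightarrow L^\infty$ (valid exactly in the range \eqref{eq,aspq4}) is essential throughout.
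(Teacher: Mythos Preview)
Your proof is correct and follows essentially the same approach as the paper: the paper proves (a) by exhibiting the dense subspace $S_0(\mathbb R^d)=\{f\in S(\mathbb R^d):\partial^\alpha f(0)=0\ \forall\alpha\}\subset H^{2,p}_\alpha$, proves (b) exactly via the closedness of $\omega-\Delta$ on $L^p$ together with the continuity of $g\mapsto g(0)$ on $H^{2,p}$, and proves (c) by referring to the $L^p$ analogues of Propositions~\ref{l.wun0} and~\ref{l.ind2}. Your use of $C^\infty_0(\mathbb R^d\setminus\{0\})$ in place of $S_0$ and your more explicit closedness argument are harmless variations of the same ideas.
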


\begin{proof}

To check a) we define
\begin{equation}
    S_0(\mathbb{R}^d) = \left\{ f \in S(\mathbb{R}^d); \partial_x^\alpha f(0)=0, \ \ \forall \alpha \in \mathbb{N}^d \right\}
\end{equation}
and see that it satisfies $S_0(\mathbb{R}^d) \subset H^{2,p}_\alpha(\mathbb{R}^d) \subset L^p(\mathbb{R}^d) $
and $S_0(\mathbb{R}^d)$ is dense in $L^p{(\mathbb{R}^d)}.$

The property b) follows from the relation \eqref{eq.phg31}, the fact that $\omega-\Delta$ is closed operator  and the property $g \in H^{2,p}{(\mathbb{R}^d)} \to g(0)$ is continuous functional,  due to the Sobolev embedding.
Finally, to prove c), it is sufficient to modify slightly Lemmas \ref{l.wun0} and \ref{l.ind2} with $L^p$ instead of $H^{1-\varepsilon}{(\mathbb{R}^d)}$ and $H^{2,p}{(\mathbb{R}^d)}$ instead of $H^2{(\mathbb{R}^d)}$.
\end{proof}

Our next step is to define the resolvent.
Since $L^2 \cap L^p$ is dense in $L^p$ and $0$ is in the resolvent set (in $L^2$ sense) of $\omega-\Delta_\alpha$ we can define  $(\omega-\Delta_\alpha)^{-1}$ (initially on $L^2 \cap L^p)$ by the formula

\begin{equation}\label{eq.res_formula87}
(\omega-\Delta_\alpha)^{-1} f\;=\;(\omega-\Delta)^{-1}f + \frac{1}{\alpha+ c(\omega)} \langle f, \mathbb{G}_\omega \rangle \mathbb{G}_\omega
\end{equation}
and  then extend it by density to $L^p.$
In fact the spectrum of the operator is
\begin{equation}
    \sigma(\Delta_\alpha) = (-\infty,0] \cup  \{\omega \in \mathbb{C}; \alpha + c(\omega) =0 \}.
\end{equation} and $\omega-\Delta$ is a sectorial operator that satisfies the classical inequality
\begin{equation} \label{eq.resDelta}
    \left\| (\omega-\Delta)^{-1}f \right\|_{L^p{(\mathbb{R}^d)}} \leq \frac{C}{|\omega|} \|f \|_{L^p{(\mathbb{R}^d)}}.
\end{equation}
From Proposition \ref{lemA3}, the inequality \eqref{eq.estpg3} combined with H\"older inequality gives
\begin{equation}\label{eq.Holder}
    \langle f,\mathbb{G}_\omega\rangle\lesssim |\omega|^{\frac{d}{2p}-1}\|f\|_{L^p{(\mathbb{R}^d)}},
\end{equation}
so the operator
\begin{equation}\label{eq.remB7}
   \mathbb B_{\omega} : f \in L^p{(\mathbb{R}^d)} \mapsto \frac{1}{\alpha+ c(\omega)} \langle f, \mathbb{G}_{\omega} \rangle \mathbb{G}_{\omega}
\end{equation}
satisfies the estimate
\begin{equation}
    \left\| \mathbb B_{\omega}(f) \right\|_{L^p{(\mathbb{R}^d)}} \leq \frac{C}{|\omega-E_\alpha|} \|f \|_{L^p{(\mathbb{R}^d)}}.
\end{equation}

Further, $\omega-\Delta_\alpha$ is a sectorial operator, namely it is a closed operator with spectrum in $(0,\infty)$ and such that for $\omega$ satisfying \eqref{eq.2d436} there exists a constant $C>0$ 
so that
\begin{equation} \label{eq.res72m}
    \left\| (\omega-\Delta_\alpha)^{-1}f \right\|_{L^p{(\mathbb{R}^d)}} \leq \frac{C}{d(\omega,\sigma(\Delta_\alpha))} \|f \|_{L^p{(\mathbb{R}^d)}},
\end{equation}
{ where $d(\omega,\sigma(\Delta_\alpha))$ is the distance of $\omega$ from the spectrum $\sigma(\Delta_\alpha)$.}

Finally we can assert that
\begin{equation}
    \begin{aligned}
   &  (\omega-\Delta_\alpha)^{-1}:L^p{(\mathbb{R}^d)} \to  H^{2,p}_\alpha{(\mathbb{R}^d)} ,\\
   & (\omega-\Delta_\alpha) (\omega-\Delta_\alpha)^{-1}f = f , \ f \in L^p{(\mathbb{R}^d)}\\
    & (\omega-\Delta_\alpha)^{-1} (\omega-\Delta_\alpha)\phi = \phi , \ \phi  \in H^{2,p}_\alpha{(\mathbb{R}^d)},\\
   & H^{2,p}_\alpha{(\mathbb{R}^d)} = \mathrm{Ran}(\omega-\Delta_\alpha)^{-1} =
   \left\{\phi = (\omega-\Delta_\alpha)^{-1}f;\ f \in L^p{(\mathbb{R}^d)} \right\}.
    \end{aligned}
\end{equation}
From the above properties we conclude also
\begin{equation}
    \mathrm{Ran} \ (\omega - \Delta_\alpha) = L^p.
\end{equation}

\subsection{The space {$H^{s,p}_\alpha(\mathbb{R}^d)$}}\label{subsection Hspa}

Once we know that $\omega-\Delta_\alpha$ is a sectorial operator, that satisfies \eqref{eq.res72m}, we can use the fractional powers of the operator as stated in the Subsection \ref{s.def fractional} (in alternative way we refer to \cite{H81}, \cite{EN2000}).

One possible way to introduce the spaces ${H^{s,p}_\alpha(\mathbb{R}^d)},$  {with $s\in(0,2)$ and $p$ that satisfies \eqref{eq. condition p}} is to define {them} as the ranges of the operator
$(\omega-\Delta_\alpha)^{-s/2}$.
 
With the  definition \eqref{eq.fractionalKomatsu negative} we can extend  the fractional power
$(\omega-\Delta_\alpha)^{-s/2}$  ,on the $L^p$ setting, considering the smallest closure of the operator 
\begin{equation}
    (\omega-\Delta_\alpha)^{-s/2} = \frac{\sin (s\pi/2)}{\pi}\int_0^\infty t^{-s/2} (\omega+t-\Delta_\alpha)^{-1} dt
\end{equation}
initially defined on $L^2\cap L^p$.
Using the resolvent formula \eqref{eq.res_formula87}, we can write in a more explicit way
\begin{equation}\label{eq.e12pf55}
    (\omega-\Delta_\alpha)^{-s/2}f=(\omega-\Delta)^{-s/2}f+\frac{\sin (s\pi/2)}{\pi}\int_0^\infty t^{-s/2} \mathbb{B}_{\omega+t} dt,
\end{equation}
with $\mathbb{B}_{\omega+t}$ defined in \eqref{eq.remB7}.
We note that, from the definition of $\alpha+c(\omega+t)$ in \eqref{eq.defc}, we can write
\begin{equation}\label{eq.w bound}
    {|\alpha+c(\omega+t)|\gtrsim |\omega+t|^{\frac{d}{2}-1},}
\end{equation}
that, combined with  \eqref{eq.Holder} and \eqref{eq.estpg3}, provides the estimate
\begin{equation}
    \|(\omega-\Delta_\alpha)^{-s/2}f\|_{L^p}\lesssim\|f\|_{L^p}\left(1+\int_0^\infty t^{-s/2}|\omega+t|^{-1}dt\right)
\end{equation}
{with} the integral finite for every $s\in(0,2)$.

Hence, the  operator $(\omega-\Delta_\alpha)^{-s/2}$ is {well-defined} on $L^p{(\mathbb{R}^d)}$ and satisfies the properties
\begin{enumerate}
    \item
    \begin{equation}
         (\omega-\Delta_\alpha)^{-s/2} : L^p{(\mathbb{R}^d)} \to L^p{(\mathbb{R}^d)},  \ \mbox{ for $p$ satisfying \eqref{eq. condition p}};
    \end{equation}

    \item $ (\omega-\Delta_\alpha)^{-s/2}$ is injective (see  Proposition 5.30 in \cite{EN2000}) and closed;

    \item the image $ \mathrm{Ran}(\omega-\Delta_\alpha)^{-s/2}$ of $(\omega-\Delta_\alpha)^{-s/2}$ is closed in $L^p{(\mathbb{R}^d)}$ (see { Corollary 7.4 in \cite{Ko66}}).
\end{enumerate}

Then we can use Definition 5.31 in \cite{EN2000} and define { the spaces $H^{s,p}_\alpha(\mathbb{R}^d)$}.
\begin{Definition}
Because $(\omega-\Delta_\alpha)^{-s/2}$ is injective and closed, we define $H^{s,p}_\alpha(\mathbb{R}^d)$ as 
\begin{equation}\label{eq.defdp72}
    H^{s,p}_\alpha{(\mathbb{R}^d)}: = \mathrm{Ran}(\omega-\Delta_\alpha)^{-s/2}.
\end{equation}
Moreover, we can define the operator 
\begin{equation}
   (\omega-\Delta_\alpha)^{s/2} :H^{s,p}_\alpha {(\mathbb{R}^d)}\to L^p{(\mathbb{R}^d)}
\end{equation}
as the inverse of
\begin{equation}
    (\omega-\Delta_\alpha)^{-s/2} : L^p{(\mathbb{R}^d)}  \ \ \to H^{s,p}_\alpha{(\mathbb{R}^d)}.
\end{equation}
\end{Definition}
 Note that we have   the relation
\begin{equation}
    \overline{\left(  (\omega-\Delta_\alpha)^{s/2}\Bigg|_{H^{2,p}_\alpha{(\mathbb{R}^d)}}\right)} =
    (\omega-\Delta_\alpha)^{s/2}\Bigg|_{H^{s,p}_\alpha{(\mathbb{R}^d)}}.
\end{equation}

{ 

\section{Sobolev embedding}\label{s.Sobolev embedding}

Our first step is to  generalise  the  following Sobolev inequality of  Lemma 2.2 in \cite{GMS24}, stating that
for any $q \in (2,\infty)$ there is a constant $C=C(q)>0$ so that for any $\phi \in H^1_\alpha{(\mathbb{R}^2)}$ we have $\phi \in L^q{(\mathbb{R}^2)}$ and
   \begin{equation}\label{eq.Sobolev2}
       \|\phi\|_{L^q(\mathbb{R}^2)} \leq C \|\phi\|_{H^1_\alpha{(\mathbb{R}^2)}}.
   \end{equation}

The key point is to use 
the fractional calculus formula \eqref{eq.e12pf55} that implies
   \begin{equation} \label{eq.frdiv6}
\begin{aligned}
   & (\omega-\Delta_\alpha)^{-s/2} (\varphi)(x)  - (\omega-\Delta)^{-s/2}(\varphi)(x)  \\
    &  =\frac{\sin (s\pi/2)}{\pi}\int_0^\infty t^{-s/2}  \langle \varphi, \mathbb{G}_{\omega+t} \rangle \mathbb{G}_{\omega+t}(x) \frac{dt}{\alpha+c(\omega+t)}  \\
    & =\frac{\sin (s\pi/2)}{\pi}\int_0^\infty t^{-s/2}  \mathbb B_{\omega+t} (\varphi)dt,
\end{aligned} 
   \end{equation}
where the operator $\mathbb{B}_\omega$ is defined in \eqref{eq.remB7}, i.e.
\begin{equation}
   \mathbb B_{\omega+t} (\varphi) = \langle \varphi, \mathbb{G}_{\omega+t} \rangle \mathbb{G}_{\omega+t}(x)    \frac{1}{\alpha+c(\omega+t)}. 
\end{equation}

\begin{Lemma}\label{l.sem1} { Let $d=2,3$} and let
  $ 1< p < q < \infty$ with $p$ satisfying \eqref{eq. condition p}. If $s>0$ satisfies

\begin{equation}\label{eq.sobr0}
       \begin{aligned}
        & s < \frac{d}{p}-d+2
       \end{aligned}
   \end{equation}
   and
   \begin{equation}\label{eq.sobr1}
       \begin{aligned}
        & \frac{s}{d} = \frac{1}{p}-\frac{1}{q},
       \end{aligned}
   \end{equation}
   then  there exists  a constant $C=C(d,p,q)>0$ so that for 
   any  $\phi \in H^{s,p}_\alpha{(\mathbb{R}^d)}$ we have $\phi \in L^q{(\mathbb{R}^d)}$ and
   \begin{equation}\label{eq.pSobolev} 
       \|\phi\|_{L^q(\mathbb{R}^{d})} \leq C \|\phi\|_{H^{s,p}_\alpha(\mathbb{R}^d)}.
   \end{equation}
\end{Lemma}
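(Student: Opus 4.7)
The plan is to start from the resolvent-type identity \eqref{eq.frdiv6}. For $\phi\in H^{s,p}_\alpha(\mathbb{R}^d)$, by definition there is $\varphi\in L^p(\mathbb{R}^d)$ with $\phi=(\omega-\Delta_\alpha)^{-s/2}\varphi$ and $\|\varphi\|_{L^p}\simeq\|\phi\|_{H^{s,p}_\alpha}$. Applying \eqref{eq.frdiv6} one gets the decomposition
\begin{equation}
\phi \;=\; (\omega-\Delta)^{-s/2}\varphi \;+\; \frac{\sin(s\pi/2)}{\pi}\int_0^\infty t^{-s/2}\, \mathbb B_{\omega+t}\varphi\, dt \;=:\; \phi_{\mathrm{free}}+T\varphi,
\end{equation}
so that it suffices to estimate each summand in $L^q$ by $\|\varphi\|_{L^p}$.

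For the classical part, I would invoke the Hardy--Littlewood--Sobolev inequality for the Bessel potential $(\omega-\Delta)^{-s/2}$, which gives $\|\phi_{\mathrm{free}}\|_{L^q}\lesssim\|\varphi\|_{L^p}$ exactly under the scaling $s/d=1/p-1/q$. The admissibility of $q$ in the relevant range is automatic from the assumptions: the condition $s<d/p-d+2$ (together with the restriction \eqref{eq. condition p} on $p$) ensures $1/q=1/p-s/d>\max(0,(d-2)/d)$, so $q<\infty$ when $d=2$ and $q<3$ when $d=3$, as required in \eqref{eq.conditionp}.

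The hard part is to control $T\varphi$, which I would view as an integral operator with kernel
\begin{equation}
K(x,y)\;=\;\frac{\sin(s\pi/2)}{\pi}\int_0^\infty \frac{t^{-s/2}}{\alpha+c(\omega+t)}\,\mathbb G_{\omega+t}(x)\,\overline{\mathbb G_{\omega+t}(y)}\, dt.
\end{equation}
A naive H\"older on the pairing $\langle \varphi,\mathbb G_{\omega+t}\rangle$ followed by Minkowski would move the $L^q$ norm inside the $t$-integral and, after using the rescaling bound \eqref{eq.estpg3} and the lower estimate \eqref{eq.w bound} on $|\alpha+c(\omega+t)|$, yield an integrand $\sim t^{-s/2}|\omega+t|^{s/2-1}$, which is logarithmically divergent at infinity precisely because we are at the sharp scaling $s/d=1/p-1/q$. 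The main obstacle is therefore this endpoint divergence, and it must be resolved by keeping the two Green factors paired before integrating. The route I would follow is to perform the $t$-integration first by changing variables $u^2=\omega+t$ and exploiting the explicit form \eqref{eq.defGlambda1} of $\mathbb G_{\omega+t}$ with its exponential decay at infinity (see \eqref{eq.taylor infinity}) and its $|x|^{2-d}$, resp.\ logarithmic, singularity at the origin (see \eqref{eq.taylor0}). A direct computation then shows that $K(x,y)$ is pointwise dominated by a kernel of the form $|x|^{2-d}|y|^{2-d}(|x|+|y|)^{s-(2-d)(d-1)}$ modulo logarithmic factors when $d=2$, which is homogeneous of degree $-(d-s)$ under the simultaneous scaling $(x,y)\mapsto(\lambda x,\lambda y)$, matching exactly the homogeneity of the Riesz potential.

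Once the kernel $K$ has been put in this scale-invariant form, the $L^p\to L^q$ mapping property follows from a Schur test with radial weights $h(x)=|x|^{-\alpha}$ (chosen so that both one-variable integrals close up with the correct power of $|x|$ or $|y|$), or equivalently by a Hardy--Littlewood--Sobolev argument adapted to non-convolution kernels with the same scaling. The whole computation is initially justified on the dense subclass $L^p\cap L^2$ where the Komatsu integral representation is immediate, and extended to all of $L^p$ by the density of $L^p\cap L^2$ and the continuity of both sides in $\varphi$.
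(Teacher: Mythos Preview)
Your decomposition into the free Bessel potential and the rank-one remainder $T\varphi$ matches the paper, and you correctly identify that the naive Minkowski route diverges logarithmically at the sharp index. But your remedy is both more complicated than the paper's and not carried through.

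The paper resolves the divergence not by analyzing the two-variable kernel $K(x,y)$, but by aiming for a \emph{weak-type} bound first. One applies H\"older only to the pairing, $|\langle\varphi,\mathbb G_{\omega+t}\rangle|\lesssim\|\mathbb G_{\omega+t}\|_{L^{p'}}\|\varphi\|_{L^p}\lesssim|\omega+t|^{d/(2p)-1}\|\varphi\|_{L^p}$, keeps the remaining factor $\mathbb G_{\omega+t}(x)=|\omega+t|^{(d-2)/2}\mathbb G_1(\sqrt{\omega+t}\,x)$ pointwise, and uses the lower bound \eqref{eq.w bound}. The $t$-integral then takes the form $\int_0^\infty t^{-s/2}|\omega+t|^{d/(2p)-1}|\mathbb G_1(\sqrt{\omega+t}\,x)|\,dt$, which Lemma~\ref{l.ste44} with $A=d/(2p)$ evaluates as $\lesssim|x|^{s-d/p}=|x|^{-d/q}$. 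Hence $|T\varphi(x)|\lesssim|x|^{-d/q}\|\varphi\|_{L^p}$, which is exactly $T:L^p\to L^{q,\infty}$; the strong bound then follows by Marcinkiewicz interpolation in the parameters. Only one Green factor is ever kept inside the $t$-integral, and no kernel analysis is needed.

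Your kernel route has two concrete gaps. First, the exponent you write is off: for $d=3$ your majorant $|x|^{2-d}|y|^{2-d}(|x|+|y|)^{s-(2-d)(d-1)}$ is homogeneous of degree $s$, not $-(d-s)$; the bound one actually gets from the exponential tail of $\mathbb G_1$ is $|x|^{-1}|y|^{-1}(|x|+|y|)^{s-1}$ (with an analogous discrepancy at $d=2$). Second, and more seriously, the standard Schur test with weights yields $L^p\to L^p$ bounds, not $L^p\to L^q$ with $p<q$, and there is no off-the-shelf ``HLS for non-convolution kernels of the right homogeneity'' that one can simply cite. One can salvage the strategy---for instance by reading off the same pointwise bound $|T\varphi(x)|\lesssim|x|^{-d/q}\|\varphi\|_{L^p}$ from the corrected kernel estimate and then interpolating---but that is precisely the paper's argument, reached by a longer road.
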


{\begin{Remark}
If $d=2,$ then \eqref{eq.sobr1} implies \eqref{eq.sobr0}. Let $d=3.$
     Theorem \ref{t.2.2} states that $H^{s,p}_\alpha(\mathbb{R}^3)=H^{s,p}(\mathbb{R}^3)$, provided $s<\frac{3}{p}-1$. Hence,  \eqref{eq.pSobolev} coincides with the standard Sobolev embedding. 
     There is a simple counterexample showing for $d=3$ the estimate \eqref{eq.pSobolev}  is not true for $s >\frac{3}{p}-1.$ This follows from the decomposition established in Theorems \ref{t.23A1} and the fact that for $s>\frac{3}{p}-1$ and \eqref{eq.sobr1} for $d=3$ we get $q >3.$ Indeed, in this case the singular part  $\mathbb{G}_\omega$ can not be in $ L^q(\mathbb{R}^3)$ for  $q\geq3.$
\end{Remark}

\begin{Remark}
In applications, we can use $H^{s_1,p}_\alpha(\mathbb{R}^d) \subset H^{s_2,p}_\alpha(\mathbb{R}^d)$ with $2 >s_1>s_2>0.$ Sufficient conditions for Sobolev embedding are
   \begin{equation}\label{eq.sobr2}
       \begin{aligned}
        &d-2 <\frac{d}{q}  \\
        &\frac{d}{p}-\frac{d}{q} \leq s.
       \end{aligned}
   \end{equation}
\end{Remark}

}

\begin{proof} 
The definition \eqref{eq.defdp72} of the perturbed Sobolev space
shows that we have to prove the inequality
\begin{equation}
       \|(\omega-\Delta_\alpha)^{-s/2}\varphi\|_{L^q(\mathbb{R}^d)} \leq C \|\varphi\|_{L^p(\mathbb{R}^d)}.
   \end{equation}
Obviously, we can use the classical Sobolev embedding and see that this inequality shall be established if we prove
\begin{equation}\label{eq.wmi28}
       \|\left[(\omega-\Delta_\alpha)^{-s/2} - (\omega-\Delta)^{-s/2} \right]\varphi\|_{L^{q}(\mathbb{R}^d)} \leq C \|\varphi\|_{L^p(\mathbb{R}^d)}, 
   \end{equation}

This inequality can be deduced from 

\begin{equation}\label{eq.wmi35}
       \|\left[(\omega-\Delta_\alpha)^{-s/2} - (\omega-\Delta)^{-s/2} \right]\varphi\|_{L^{q,\infty}(\mathbb{R}^d)} \leq C \|\varphi\|_{L^p(\mathbb{R}^d)}, 
   \end{equation}
   and Marcinkiewicz interpolation argument.

Indeed, starting with the formula \eqref{eq.frdiv6}, we 
can complete the proof, if we establish the inequality
\begin{equation}\label{eq.wmi1}
       \left\|\int_0^\infty t^{-s/2}  \langle \varphi, \mathbb{G}_{\omega+t} \rangle \mathbb{G}_{\omega+t}(x) \frac{dt}{\alpha+c(\omega+t)}\right\|_{L^{q,\infty}(\mathbb{R}^d)} \leq C \|\varphi\|_{L^p(\mathbb{R}^d)}, 
   \end{equation}

We take $\omega = 2 + E_\alpha$ and fix it.
Then we need the following pointwise estimate
\begin{equation}
    \begin{aligned}
      &  \left| \frac{\sin (s\pi/2)}{\pi}\int_0^\infty t^{-s/2}  \langle \varphi, \mathbb{G}_{\omega+t} \rangle \mathbb{G}_{\omega+t}(x)  \frac{dt}{\alpha+c(\omega+t)}\right|  \\
 &\lesssim\int_0^\infty t^{-s/2}  (1+t)^{(d-2)/2-d/(2p^\prime)} \left| \mathbb{G}_{\omega+t}(x) \right| \frac{dt}{|\alpha+c(\omega+t)|} \|\varphi\|_{L^p{(\mathbb{R}^d)}} 
    \end{aligned}
\end{equation}
where we have used \eqref{eq.Holder}.

Now we can use also the rescaling property \eqref{eq.rescalingG}, i.e. 
\begin{equation} \label{eq.defG76}
\mathbb{G}_{\omega+t}(x) \leq  (1+t)^{(d-2)/2} \mathbb{G}_1(\sqrt{\omega+t} x) ,
\end{equation}
and \eqref{eq.w bound} to write
\begin{equation}
    \begin{aligned}
      &  \left| (\omega-\Delta_\alpha)^{-s/2} (\varphi)(x)  - (\omega-\Delta)^{-s/2}(\varphi)(x) \right|  \\
      &\lesssim\left| \frac{\sin (s\pi/2)}{\pi}\int_0^\infty t^{-s/2}  \langle \varphi, \mathbb{G}_{\omega+t} \rangle \mathbb{G}_{\omega+t}(x)  \frac{dt}{\alpha+c(\omega+t)}\right|  \\
 &\lesssim\int_0^\infty t^{-s/2}  (1+t)^{d/(2p) - 1} \left| \mathbb{G}_{1}( \sqrt{\omega+t}x) \right|dt \|\varphi\|_{L^p{(\mathbb{R}^d)}} 
    \end{aligned}
    \end{equation}
To this end we can apply Lemma \ref{l.ste44} { with $A=\frac{d}{2p}$} and deduce
\begin{equation}
    \begin{aligned}
      &  \left| (\omega-\Delta_\alpha)^{-s/2} (\varphi)(x)  - (\omega-\Delta)^{-s/2}(\varphi)(x) \right|  \\
 &\lesssim|x|^{s-d/p} \|\varphi\|_{L^p{(\mathbb{R}^d)}} = |x|^{-d/q} \|\varphi\|_{L^p{(\mathbb{R}^d)}}.
    \end{aligned}
    \end{equation}
{ If $d=3$, we note that the integral  in the hypothesis of Lemma \ref{l.ste44} is convergent if and only if $s<\frac{3}{p}-1$, because $\mathbb{G}_1(\sqrt{\sigma})\sim\sigma^{-1/2}$, when $\sigma\to0$.}
    
    This completes the proof.

\end{proof}

It is not difficult to check the following
\begin{Lemma}\label{l.sem16} { Let $d=2,3$.
   For any $p \in (2,\infty)$ satisfying \eqref{eq. condition p} and any $s\in (0,2),$ satisfying \eqref{eq.sobr0}, then 
   }
   the space $H^2_\alpha{(\mathbb{R}^d)}$ is dense in $H^{s,p}_\alpha{(\mathbb{R}^d)}.$
\end{Lemma}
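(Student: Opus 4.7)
The strategy is to build an approximating sequence in $H^2_\alpha(\mathbb{R}^d)$ by regularising the $L^p$ pre-image of $\phi$ under $(\omega-\Delta_\alpha)^{-s/2}$, and then to transfer the convergence from $L^p$ back to $H^{s,p}_\alpha$. The point is that the Komatsu functional calculus is compatible on $L^2\cap L^p$, so one and the same representative will simultaneously carry the $L^p$ and $L^2$ regularity that we need.

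Given $\phi\in H^{s,p}_\alpha(\mathbb{R}^d)$, the definition \eqref{eq.defdp72} provides a unique $f\in L^p(\mathbb{R}^d)$ with $\phi=(\omega-\Delta_\alpha)^{-s/2}f$, and by construction $\|\phi\|_{H^{s,p}_\alpha}=\|f\|_{L^p}$. I would approximate $f$ in $L^p(\mathbb{R}^d)$ by a sequence $f_n\in S_0(\mathbb{R}^d)$, the Schwartz functions vanishing to infinite order at the origin that were used in the proof of part (a) of Lemma \ref{l.propH2pa}; their $L^p$-density follows from a standard cut-off and mollification argument.

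Now set $\phi_n:=(\omega-\Delta_\alpha)^{-s/2}f_n$. The structural fact driving the argument is that $S_0(\mathbb{R}^d)\subset H^2_\alpha(\mathbb{R}^d)$: for $f_n\in S_0$ one has $f_n(0)=0$, so the singular coefficient in \eqref{eq.op_dom} vanishes and $f_n$ coincides with its own regular part, which lies in $S(\mathbb{R}^d)\subset H^2(\mathbb{R}^d)$. In particular $f_n\in H^{2-s}_\alpha(\mathbb{R}^d)=\mathcal{D}((\omega-\Delta_\alpha)^{(2-s)/2})$ in the $L^2$ sense, and the composition rule of Proposition \ref{p.Komatsu}, applied with exponents $-s/2$ and $(2-s)/2$ to the sectorial operator $\omega-\Delta_\alpha$ on $L^2$, places $\phi_n\in\mathcal{D}(\omega-\Delta_\alpha)=H^2_\alpha(\mathbb{R}^d)$. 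The convergence then reduces to
\[
\|\phi_n-\phi\|_{H^{s,p}_\alpha}=\|(\omega-\Delta_\alpha)^{s/2}(\phi_n-\phi)\|_{L^p}=\|f_n-f\|_{L^p}\longrightarrow 0.
\]

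The main obstacle is the identification of the $L^p$ and $L^2$ realisations of $(\omega-\Delta_\alpha)^{-s/2}$ on the common subspace $L^2\cap L^p$: the argument above uses the $L^p$ interpretation in the convergence step but the $L^2$ interpretation to locate $\phi_n$ inside $H^2_\alpha(\mathbb{R}^d)$. Since both realisations are built from the Komatsu integral \eqref{eq.fractionalKomatsu negative} applied to the same resolvent \eqref{eq.res_formula87}, this compatibility is a dominated-convergence verification using the estimates \eqref{eq.resDelta}, \eqref{eq.Holder} and \eqref{eq.w bound} already at our disposal; once it is in place, the rest of the proof is the purely operator-theoretic bookkeeping sketched above.
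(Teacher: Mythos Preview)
Your argument is correct and is in fact somewhat more direct than the paper's. Both proofs pull $\phi\in H^{s,p}_\alpha$ back to $f\in L^p$ via $(\omega-\Delta_\alpha)^{-s/2}$ and approximate there, but the paper proceeds in two stages: it first replaces $f$ by $\phi_n\in L^2\cap L^p$ and then applies a second mollifier $\omega(\omega-\delta\Delta_\alpha)^{-1+s/2}$ to push $\phi_n$ into $H^{2-s}_\alpha$, finally sending $\delta\to 0$. By approximating directly with $f_n\in S_0(\mathbb{R}^d)\subset H^2_\alpha\subset H^{2-s}_\alpha$ you bypass the $\delta$-limit entirely and land in $\mathcal{D}(\omega-\Delta_\alpha)$ via a single application of Proposition~\ref{p.Komatsu}. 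The trade-off is that the paper explicitly records the inclusion $H^2_\alpha(\mathbb{R}^d)\subset H^{s,p}_\alpha(\mathbb{R}^d)$ (deduced from the Sobolev embedding of Lemma~\ref{l.sem1}, which is where the hypothesis $s<\tfrac{d}{p}-d+2$ enters), whereas your route only exhibits approximants in $H^2_\alpha\cap H^{s,p}_\alpha$ without addressing the full inclusion; adding that one line would make the statement complete in the same sense as the paper's.
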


\begin{proof}
{ Firstly, we have to verify the inclusion $H^2_\alpha(\mathbb{R}^d)\subseteq H^{s,p}_\alpha(\mathbb{R}^d),$ that is equivalent to $H^{2-s}_\alpha(\mathbb{R}^d)\subseteq L^p(\mathbb{R}^d)$, stated by the Sobolev embedding of Lemma \ref{l.sem1}. 
To prove the density, for every $f\in H^{s,p}_\alpha(\mathbb{R}^d)$ and for every $\varepsilon>0$ we need to find an $f_\varepsilon\in H^2_\alpha(\mathbb{R}^d)$ such that $\|f-f_\varepsilon\|_{H^{s,p}_\alpha(\mathbb{R}^d)}\leq\varepsilon.$
From the definition of $ H^{s,p}_\alpha(\mathbb{R}^d)$, there exists $\phi\in L^p(\mathbb{R}^d)$ such that 
$$f=(\omega-\Delta_\alpha)^{-s/2}\phi,\ \ \omega=2+E_\alpha.$$
There exists a sequence of $\phi_n\in L^2\cap L^p (\mathbb{R}^d)$, such that $\phi_n\to \phi $ in $L^p(\mathbb{R}^d)$ when $n\to\infty$.
For every $\delta>0$, we define the function $H^2_\alpha(\mathbb{R}^d)\ni f_{\delta,n}=(\omega-\Delta_\alpha)^{-s/2}\omega(\omega-\delta\Delta_\alpha)^{-1+s/2}\phi_n$. 
The triangular inequality provides for every $n$ and $\delta$

\begin{equation}
    \begin{aligned}
        \|f_{\delta,n}-f\|_{H^{s,p}_\alpha(\mathbb{R}^d)}&=\|\omega(\omega-\delta\Delta_\alpha)^{-1+s/2}\phi_n-\phi\|_{L^p(\mathbb{R}^d)}\\
        &\leq \|\omega(\omega-\delta\Delta_\alpha)^{-1+s/2}\phi_n-\phi_n\|_{L^p(\mathbb{R}^d)}+\|\phi_n-\phi\|_{L^p(\mathbb{R}^d)}.
    \end{aligned}
\end{equation}
So, after choosing $n_0$ such that $\|\phi_{n_0}-\phi\|_{L^p(\mathbb{R}^d)}\leq \varepsilon/2$ , we can consider $\delta_1\ll1$, such that
$$\|\omega(\omega-\delta_1\Delta_\alpha)^{-1+s/2}\phi_{n_0}-\phi_{n_0}\|_{L^p(\mathbb{R}^d)}\leq \varepsilon/2.$$
The proof is concluded, choosing $f_\varepsilon=f_{\delta_1,n_0}$}.
\end{proof}

\section{Characterization of $H^{s,p}_\alpha{(\mathbb{R}^d)}$}\label{s.characterization}
We divide this Section in five Lemmas, the first one is  treating the embedding
\begin{equation}\label{eq.incsp4}
   H^{s,p}{(\mathbb{R}^d)} \subseteq H^{s,p}_\alpha{(\mathbb{R}^d)}, 
\end{equation}
the second subsection treats the 
inclusion
\begin{equation}\label{eq.spg9}
    \mathbb{G}_\omega \in H^{s,p}_\alpha{(\mathbb{R}^d)},
\end{equation}
the third one gives representation of $H^{s,p}_\alpha{(\mathbb{R}^d)}$ in the
case $0<s<d/p -(d-2)$, the fourth one treats the case $ d/p -(d-2) < s < d/p$ and the final subsection treats the case $s>d/p.$ In all this Section, we will assume $\omega >E_\alpha$, {and $s\in(0,2)$}.

\subsection{Statement on inclusion \eqref{eq.incsp4} }

\begin{Lemma}\label{l.inclusion1}
    
    If $p$ satisfies \eqref{eq. condition p} and
    \begin{equation}
        sp < d, d=2,3,
    \end{equation}
    then we have
   { \begin{equation}\label{eq.inclusion1}
       \left\|(\omega-\Delta_\alpha)^{s/2}\phi\right\|_{L^p(\mathbb{R}^d)} \lesssim \|(\omega-\Delta)^{s/2}\phi\|_{L^p(\mathbb{R}^d)},
    \end{equation}
    for every $\phi\in H^{s,p}(\mathbb{R}^d)$.}
\end{Lemma}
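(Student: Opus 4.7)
The plan is to reformulate the desired inequality as the boundedness on $L^p(\mathbb{R}^d)$ of the operator $(\omega-\Delta_\alpha)^{s/2}(\omega-\Delta)^{-s/2}$. Equivalently, given $\phi\in H^{s,p}(\mathbb{R}^d)$ and setting $f:=(\omega-\Delta)^{s/2}\phi\in L^p(\mathbb{R}^d)$, I will exhibit some $g\in L^p(\mathbb{R}^d)$ with $(\omega-\Delta_\alpha)^{-s/2}g=\phi$ and $\|g\|_{L^p}\lesssim \|f\|_{L^p}$; then $\phi\in H^{s,p}_\alpha(\mathbb{R}^d)$ by \eqref{eq.defdp72}, and $(\omega-\Delta_\alpha)^{s/2}\phi=g$ satisfies the claim.

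Starting from the explicit resolvent-type identity \eqref{eq.e12pf55}, i.e.\ $(\omega-\Delta_\alpha)^{-s/2}=(\omega-\Delta)^{-s/2}+\mathbb{R}_s$ with $\mathbb{R}_s=\tfrac{\sin(s\pi/2)}{\pi}\int_0^\infty t^{-s/2}\mathbb{B}_{\omega+t}\,dt$, the requirement $(\omega-\Delta_\alpha)^{-s/2}g=\phi=(\omega-\Delta)^{-s/2}f$ becomes the fixed-point equation
\begin{equation*}
g=f-Kg,\qquad K:=(\omega-\Delta)^{s/2}\mathbb{R}_s,
\end{equation*}
with the explicit representation
\begin{equation*}
Kg(x)=\frac{\sin(s\pi/2)}{\pi}\int_0^\infty t^{-s/2}\,\frac{\langle g,\mathbb{G}_{\omega+t}\rangle}{\alpha+c(\omega+t)}\,(\omega-\Delta)^{s/2}\mathbb{G}_{\omega+t}(x)\,dt.
\end{equation*}
The proof then reduces to establishing the operator bound $\|K\|_{L^p\to L^p}<1$ for some admissible $\omega$ (say, $\omega$ large), after which a Neumann series yields $g=(I+K)^{-1}f$ with $\|g\|_{L^p}\lesssim\|f\|_{L^p}$. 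The $\omega$-independence of $H^{s,p}_\alpha(\mathbb{R}^d)$ (argued as in Lemma \ref{l.propH2pa}) then upgrades the inequality to arbitrary admissible $\omega$.

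To bound $K$, I would combine three ingredients already in hand: the H\"older-type control $|\langle g,\mathbb{G}_{\omega+t}\rangle|\lesssim (1+t)^{\frac{d-2}{2}-\frac{d}{2p'}}\|g\|_{L^p}$ coming from \eqref{eq.Holder} and Proposition \ref{lemA3}; the lower bound $|\alpha+c(\omega+t)|\gtrsim (1+t)^{(d-2)/2}$ from \eqref{eq.w bound}; and a pointwise bound for $|(\omega-\Delta)^{s/2}\mathbb{G}_{\omega+t}(x)|$ obtained from Lemma \ref{l.dsgr06} after rescaling via \eqref{eq.rescalingG}. Taking $L^p$-norm by Minkowski's inequality and changing variables $y=\sqrt{\omega+t}\,x$ produces an integrable kernel in $t$ precisely under the condition $sp<d$.

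The main obstacle is the range $\frac{d}{p}-d+2\le s<\frac{d}{p}$ which is nonempty only for $d=3$. There $(\omega-\Delta)^{s/2}\mathbb{G}_{\omega+t}$ fails to belong to $L^p(\mathbb{R}^3)$ pointwise in $t$ because of its singularity $|x|^{2-s-d}$ at the origin, so the naive Minkowski bound already diverges at the spatial origin. To circumvent this, I would split the $t$-integral into $t\le 1$ and $t\ge 1$, use the cancellations present in $\int_0^1 t^{-s/2}(\omega-\Delta)^{s/2}\mathbb{G}_{\omega+t}(x)\,dt$ (which, by writing $|\xi|^s=(1+\lambda+|\xi|^2)^{s/2}-(\text{lower order})$ in Fourier variables, gives a milder singularity on compact sets in $x$) and the exponential decay of $\mathbb{G}_{\omega+t}$ for $t\ge 1$ to close the estimate. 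The condition $sp<d$ enters precisely as the Lebesgue integrability threshold for the resulting composite kernel.
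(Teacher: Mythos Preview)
Your overall strategy --- reducing to the boundedness of $K=(\omega-\Delta)^{s/2}\bigl[(\omega-\Delta_\alpha)^{-s/2}-(\omega-\Delta)^{-s/2}\bigr]$ on $L^p$ and then inverting $I+K$ by a Neumann series --- is quite different from the paper's, and it has a genuine gap in the three-dimensional range $\frac{3}{p}-1<s<\frac{3}{p}$.

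The paper avoids the negative-power representation altogether: it expands $(\omega-\Delta_\alpha)^{s/2}\phi$ directly via the positive-power formula \eqref{eq.fractionalKomatsu positive} and the resolvent identity, obtaining
\[
(\omega-\Delta_\alpha)^{s/2}\phi=(\omega-\Delta)^{s/2}\phi+\tfrac{\sin(\pi s/2)}{\pi}\int_0^\infty t^{s/2}\,\frac{\langle \phi,\mathbb{G}_{\omega+t}\rangle}{\alpha+c(\omega+t)}\,\mathbb{G}_{\omega+t}(x)\,dt.
\]
The crucial point is the \emph{placement of the fractional derivative}: writing $\phi=(\omega-\Delta)^{-s/2}f$, the smoothing $(\omega-\Delta)^{-s/2}$ lands on the pairing side, $\langle f,(\omega-\Delta)^{-s/2}\mathbb{G}_{\omega+t}\rangle$, where it is controlled by the classical Sobolev embedding and Proposition~\ref{lemA3}. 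The output side carries only $\mathbb{G}_{\omega+t}(x)$, whose rescaled pointwise bound yields $|x|^{-d/p}$ after integration in $t$, and Marcinkiewicz interpolation finishes the job. No smallness and no Neumann series are needed.

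In your formulation the derivative $(\omega-\Delta)^{s/2}$ hits the output $\mathbb{G}_{\omega+t}(x)$ instead. Near $x=0$ one has, for each $t$, the expansion $(\omega-\Delta)^{s/2}\mathbb{G}_{\omega+t}(x)=C_{d,s}\,|x|^{2-s-d}+\text{lower order}$, with the leading coefficient $C_{d,s}$ \emph{independent of $t$} (it is the coefficient of $|\xi|^{s-2}$ at high frequency). Hence
\[
Kg(x)\sim C_{d,s}\,|x|^{2-s-d}\int_0^\infty t^{-s/2}\,\frac{\langle g,\mathbb{G}_{\omega+t}\rangle}{\alpha+c(\omega+t)}\,dt\quad(x\to 0),
\]
and the $t$-integral is, up to a constant, exactly $C_s(g)$ from \eqref{eq.defC}, which does not vanish for generic $g$. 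Thus the singularity $|x|^{2-s-d}$ survives in $Kg$; for $s>\frac{3}{p}-1$ this is not even in $L^{p,\infty}(\mathbb{R}^3)$, so $K$ fails to be bounded on $L^p$ and no splitting of the $t$-integral can repair this. Your proposed cancellation simply is not there. (Equivalently: Proposition~\ref{pr.FractionalEstimate}, which is essentially the boundedness of your $K$, is stated and proved only for $s<\frac{d}{p}-d+2$; see also Lemma~\ref{l.decomp1}, where for larger $s$ one must peel off the singular piece $C_s(g)\mathbb{G}_\omega$.)

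A secondary issue is that even in the easy range $s<\frac{d}{p}-d+2$ you need $\|K\|<1$, not just $<\infty$; the large-$\omega$ smallness you invoke would require a separate scaling argument. The paper's direct positive-power computation sidesteps both difficulties at once.
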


\begin{proof}
  
To show this estimate we use the fractional 
formula for positive powers \eqref{eq.fractionalKomatsu positive}

\begin{equation}
    (\omega-\Delta_\alpha)^{\frac{s}{2}}\phi=\frac{\sin (s\pi/2 )}{\pi} \int_0^\infty t^{\frac{s}{2}-1} (\omega-\Delta_\alpha)(\omega+t-\Delta_\alpha)^{-1} \phi dt
\end{equation}

To this end we use the resolvent identity \eqref{eq.res_formula} and we can write
\begin{equation}\label{eq:res_for7}
\begin{aligned}
    & (-\Delta_\alpha+\omega+t)^{-1} \phi\;=\;(-\Delta+\omega+t)^{-1}\phi + \frac{1}{\alpha + c(\omega+t) }\mathbb{G}_{\omega+t} \langle \phi, \mathbb{G}_{\omega+t} \rangle , \\
  &(-\Delta_\alpha+\omega+t)^{-1} (\omega-\Delta_\alpha) \phi\;=\;(-\Delta+\omega+t)^{-1}(\omega-\Delta)\phi - \frac{t}{\alpha+c(\omega+t) }\mathbb{G}_{\omega+t} \langle \phi, \mathbb{G}_{\omega+t} \rangle
\end{aligned}
\end{equation}

Hence we have
\begin{equation}\label{eq.defAfr45}
\begin{aligned}
  &  (\omega-\Delta_\alpha)^{s/2} \phi = (\omega-\Delta)^{s/2} \phi+ \frac{\sin (\pi s/2)}{\pi} \int_0^\infty t^{s/2}  \frac{1}{\alpha + c(\omega+t) }\mathbb{G}_{\omega+t} \langle \phi, \mathbb{G}_{\omega+t} \rangle  dt.
\end{aligned}
\end{equation}

Now our purpose is to check the estimate
\begin{equation}\label{mlpe50}
    \left\|\int_0^\infty t^{s/2}  \frac{1}{\alpha+c(\omega+t) }\mathbb{G}_{\omega+t} \langle \phi, \mathbb{G}_{\omega+t} \rangle  dt \right\|_{L^p{(\mathbb{R}^d)}} \lesssim \|\phi\|_{H^{s,p}{(\mathbb{R}^d)}}
\end{equation}
or equivalently
\begin{equation}\label{mlpe2}
    \left\|\int_0^\infty t^{s/2}  \frac{1}{\alpha + c(\omega+t) }\mathbb{G}_{\omega+t} \langle f, (\omega-\Delta)^{-s/2}\mathbb{G}_{\omega+t} \rangle  dt \right\|_{L^p{(\mathbb{R}^d)}} \lesssim \|f\|_{L^p{(\mathbb{R}^d)}},
\end{equation}
{ with $f=(\omega-\Delta)^{s/2}\phi$.}

Indeed, if \eqref{mlpe2} is verified, then from \eqref{eq.defAfr45} we get
\begin{equation}
    \|(\omega-\Delta_\alpha)^{s/2}\phi \|_{L^p{(\mathbb{R}^d)}} \lesssim \|(\omega-\Delta)^{s/2}\phi \|_{L^p{(\mathbb{R}^d)}} , 
\end{equation}
 for every $\phi \in H^{s,p},$ that can be read as
\begin{equation}\label{eq.ins2}
        \|\phi \|_{H^{s,p}_\alpha{(\mathbb{R}^d)}} \lesssim \|\phi \|_{H^{s,p}{(\mathbb{R}^d)}} .
    \end{equation}

So it remains to check \eqref{mlpe2} for $sp < d, d=2,3.$  We shall prove a weaker version
\begin{equation}\label{mlpe70}
    \left\|\int_0^\infty t^{s/2}  \frac{1}{\alpha + c(\omega+t) }\mathbb{G}_{\omega+t} \langle f, (\omega-\Delta)^{-s/2}\mathbb{G}_{\omega+t} \rangle  dt \right\|_{L^{p,\infty}{(\mathbb{R}^d)}} \lesssim \|f\|_{L^p{(\mathbb{R}^d)}}, \ sp \leq d
\end{equation}
and then via Marcinkiewicz interpolation theorem we deduce \eqref{mlpe2}.

For the purpose we use the Sobolev embedding
\begin{equation}
    \|(\omega-\Delta)^{-s/2} \mathbb G_{\omega+t}\|_{L^{p^\prime}{(\mathbb{R}^d)}} \lesssim \|\mathbb G_{\omega+t}\|_{L^q{(\mathbb{R}^d)}}, \ \  \frac{s}{d} = \frac{1}{q} -\frac{1}{p^\prime}
\end{equation}
where
$$ \frac{1}{q} = \frac{s+d}{d}-\frac{1}{p} $$
has to satisfy assumptions of {Proposition} \ref{lemA3}. For $d=2$ this assumption is $1/q \in (0,1]$ and it is satisfied {if} $sp \leq 2.$ For $d=3$ the requirement of 
{Proposition} \ref{lemA3} is $1/q \in (1/3,1]$ and it is satisfied if $sp \leq  3.$ Applying {Proposition} \ref{lemA3}, we deduce
\begin{equation}\label{eq.espp1}
    \|(\omega-\Delta)^{-s/2} \mathbb G_{\omega+t}\|_{L^{p^\prime}{(\mathbb{R}^d)}} \lesssim |\omega+t|^{(d-2)/2 - d/(2q)} = |\omega+t|^{-1-s/2+d/(2p)}
\end{equation}

Turning back to \eqref{mlpe2}, we apply \eqref{eq.espp1} in combination with the relation \eqref{eq.rescalingG} that gives
\begin{equation} \label{eq.defG92}
\mathbb{G}_{\omega+t}(x) = |\omega+t|^{(d-2)/2} \mathbb{G}_1(\sqrt{\omega+t} x) 
\end{equation}
so via 
{Proposition} \ref{lemA3} and { the change of variable $\sigma=\sqrt{\omega+t}|x|$} we get
\begin{equation}\label{mlpe93}
\begin{aligned}
   & \left|\int_0^\infty t^{s/2}  \frac{1}{\alpha + c(\omega+t) }\mathbb{G}_{\omega+t}(x) \langle f, (\omega-\Delta)^{-s/2}\mathbb{G}_{\omega+t} \rangle  dt \right|  \\ &\lesssim\left\| f \right\|_{L^p{(\mathbb{R}^d)}}\int_0^\infty t^{s/2}  \frac{1}{|\alpha + c(\omega+t)| } |\omega+t|^{(d-2)/2} |\omega+t|^{-1-s/2+d/(2p)} \left|\mathbb{G}_1(\sqrt{\omega+t} x)\right| dt \\
   &\lesssim \left\| f \right\|_{L^p{(\mathbb{R}^d)}}\int_0^\infty  |\omega+t|^{-1+d/(2p)} \left|\mathbb{G}_1(\sqrt{\omega+t} x)\right| dt \lesssim |x|^{-d/p}\left\| f \right\|_{L^p{(\mathbb{R}^d)}}.
\end{aligned}
\end{equation}
In this way the inequality \eqref{mlpe70} is established and the proof is complete.
  
\end{proof}

\subsection{Statement on inclusion \eqref{eq.spg9} }

\begin{Lemma}\label{l.GinH}
Let  $s \in {(}0, d/p),$ $d=2,3$ and let $ {\omega}\in \mathbb{C} \setminus \sigma(\Delta_\alpha).$ Then $\mathbb{G}_\omega \in H^{s,p}_\alpha{(\mathbb{R}^d)}.$
\end{Lemma}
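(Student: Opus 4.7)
The plan is to show that $(\omega-\Delta_\alpha)^{s/2}\mathbb{G}_\omega$ is well defined as an element of $L^p(\mathbb{R}^d)$ through the Komatsu formula \eqref{eq.fractionalKomatsu positive}. Once this is established, Proposition \ref{p.Komatsu} gives the identity $\mathbb{G}_\omega=(\omega-\Delta_\alpha)^{-s/2}\bigl[(\omega-\Delta_\alpha)^{s/2}\mathbb{G}_\omega\bigr]$, so by the definition \eqref{eq.defdp72} we conclude $\mathbb{G}_\omega\in\mathrm{Ran}(\omega-\Delta_\alpha)^{-s/2}=H^{s,p}_\alpha(\mathbb{R}^d)$.

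The core reduction is to obtain a clean expression for $(\omega-\Delta_\alpha)(\omega+t-\Delta_\alpha)^{-1}\mathbb{G}_\omega$. Starting from the resolvent identity \eqref{eq.res_formula87} applied to $\mathbb{G}_\omega$, I would compute the free part using the elementary consequence of \eqref{eq.diff2G}, namely $(\omega+t-\Delta)^{-1}\mathbb{G}_\omega=\tfrac{1}{t}(\mathbb{G}_\omega-\mathbb{G}_{\omega+t})$. The inner product $\langle\mathbb{G}_\omega,\mathbb{G}_{\omega+t}\rangle$ appearing in the rank-one correction is then evaluated by moving the resolvent to the other side of the pairing, using $\mathbb{G}_{\omega+t}=(\omega+t-\Delta)^{-1}\delta_0$, and exploiting the fact that $\mathbb{G}_\omega-\mathbb{G}_{\omega+t}\in H^2$ takes the well-defined value $c(\omega+t)-c(\omega)$ at the origin by the expansion \eqref{eq.taylor0}; this gives $\langle\mathbb{G}_\omega,\mathbb{G}_{\omega+t}\rangle=(c(\omega+t)-c(\omega))/t$. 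Writing $(\omega-\Delta_\alpha)=(\omega+t-\Delta_\alpha)-t$ and substituting, the terms telescope to
\[
(\omega-\Delta_\alpha)(\omega+t-\Delta_\alpha)^{-1}\mathbb{G}_\omega=\frac{\alpha+c(\omega)}{\alpha+c(\omega+t)}\,\mathbb{G}_{\omega+t}.
\]

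Inserting this identity into \eqref{eq.fractionalKomatsu positive} and applying Minkowski's inequality, the task reduces to bounding
\[
\|(\omega-\Delta_\alpha)^{s/2}\mathbb{G}_\omega\|_{L^p(\mathbb{R}^d)}\lesssim\int_0^\infty t^{s/2-1}\frac{\|\mathbb{G}_{\omega+t}\|_{L^p(\mathbb{R}^d)}}{|\alpha+c(\omega+t)|}\,dt.
\]
The rescaling estimate \eqref{eq.estpg3} gives $\|\mathbb{G}_{\omega+t}\|_{L^p}\lesssim|\omega+t|^{(d-2)/2-d/(2p)}$, and the lower bound \eqref{eq.w bound} yields $|\alpha+c(\omega+t)|\gtrsim|\omega+t|^{(d-2)/2}$. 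Combining these, the integrand is controlled by $t^{s/2-1}|\omega+t|^{-d/(2p)}$, which is integrable near $t=0$ precisely when $s>0$ and at infinity precisely when $s<d/p$, exactly matching the stated hypothesis.

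The main delicate point is the computation of $\langle\mathbb{G}_\omega,\mathbb{G}_{\omega+t}\rangle$: formal manipulations produce the divergent value $\mathbb{G}_\omega(0)$, and one must work with the renormalised difference $\mathbb{G}_\omega-\mathbb{G}_{\omega+t}\in H^2$ whose value at the origin captures the regularised constant $c$ of \eqref{eq.defc}. Everything else is a systematic application of the resolvent identity \eqref{eq.res_formula87}, Minkowski's inequality, and the scaling estimates for $\mathbb{G}_\omega$ already collected in Subsection \ref{s.regularityG}.
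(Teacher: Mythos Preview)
Your proposal is correct and follows essentially the same approach as the paper: both derive the key identity $(\omega-\Delta_\alpha)(\omega+t-\Delta_\alpha)^{-1}\mathbb{G}_\omega=\frac{\alpha+c(\omega)}{\alpha+c(\omega+t)}\mathbb{G}_{\omega+t}$ via the resolvent formula and the computation $\langle\mathbb{G}_\omega,\mathbb{G}_{\omega+t}\rangle=(c(\omega+t)-c(\omega))/t$, then plug this into the Komatsu formula and bound the resulting integral using \eqref{eq.estpg3} and \eqref{eq.w bound}. Your presentation of the telescoping via $(\omega-\Delta_\alpha)=(\omega+t-\Delta_\alpha)-t$ is a minor cosmetic variation on the paper's use of the second line of \eqref{eq:res_for7}, but the substance is identical.
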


{\begin{proof}
    We need to prove $\|(\omega-\Delta_\alpha)^{s/2}\mathbb{G}_\omega\|_{L^p(\mathbb{R}^d)}<\infty$, if $0<s<\frac{d}{p}$.
From \eqref{eq.fractionalKomatsu positive} we write
\begin{equation}\label{eq.fractional G 0}
   \begin{aligned}
        (\omega-\Delta_\alpha)^{s/2}\mathbb{G}_\omega=\frac{\sin{(\pi s/2)}}{\pi}\int_0^\infty t^{s/2-1} (t+\omega-\Delta_\alpha)^{-1}(\omega-\Delta_\alpha)\mathbb{G}_\omega  dt.
   \end{aligned}
\end{equation}
    So we start simplifying the expression $ (t+\omega-\Delta_\alpha)^{-1}(\omega-\Delta_\alpha)\mathbb{G}_\omega$.

Computing as in \eqref{eq:res_for7}, we have 
\begin{equation}\label{eq.fractionalG 1}
  \begin{aligned}
        (t+\omega-\Delta_\alpha)^{-1}(\omega-\Delta_\alpha)\mathbb{G}_\omega&=(t+\omega-\Delta)^{-1}(\omega-\Delta)\mathbb{G}_\omega- \frac{t\langle\mathbb{G_\omega},\mathbb{G}_{\omega+t}\rangle}{\alpha+c(\omega+t)}\mathbb{G}_{\omega+t}\\
        &=\left(1-\frac{t\langle\mathbb{G_\omega},\mathbb{G}_{\omega+t}\rangle}{\alpha+c(\omega+t)}\right)\mathbb{G}_{\omega+t}.
  \end{aligned}
\end{equation}
Thanks to the definition of the Green function \eqref{eq.HelmholtzDelta}
 and the property \eqref{eq.diff2G}, we can compute explicitly the scalar product 
 \begin{equation}
     \langle\mathbb{G_\omega},t \mathbb{G}_{\omega+t}\rangle=\langle\mathbb{G_\omega},(\omega-\Delta)(\mathbb{G}_\omega-\mathbb{G}_{\omega+t})\rangle=\langle\delta_0,\mathbb{G}_\omega-\mathbb{G}_{\omega+t}\rangle=(\mathbb{G}_\omega-\mathbb{G}_{\omega+t})(0)=-c(\omega)+c(\omega+t).
 \end{equation}
In this way \eqref{eq.fractionalG 1} becomes 
\begin{equation}
    (t+\omega-\Delta_\alpha)^{-1}(\omega-\Delta_\alpha)\mathbb{G}_\omega=\frac{\alpha+c(\omega)}{\alpha+c(\omega+t)}\mathbb{G}_{\omega+t}
\end{equation}
  and the fractional formula \eqref{eq.fractional G 0} for $\mathbb{G}_\omega$ can be written as
  \begin{equation}
      (\omega-\Delta_\alpha)^{s/2}\mathbb{G}_\omega=\frac{\sin{(s\pi/2)}}{\pi}\int_0^\infty t^{\frac{s}{2}-1}\frac{\alpha+c(\omega)}{\alpha+c(\omega+t)}\mathbb{G}_{\omega+t}dt.
  \end{equation}
The key part of this computation is the simplification of $c(\omega+t)$
 in the numerator, which, unlike the classical case, allows for a wider range of exponents for the convergence of the integral.
Indeed, with the estimates \eqref{eq.estpg3} and \eqref{eq.w bound} it is immediate to see 
\begin{equation}
    \|(\omega-\Delta_\alpha)^{s/2}\mathbb{G}_\omega\|_{L^p(\mathbb{R}^d)}\lesssim\int_0^\infty t^{\frac{s}{2}-1}\frac{\|\mathbb{G}_{\omega+t}\|_{L^p(\mathbb{R}^d)}}{|\alpha+c(\omega+t)|}dt\lesssim\int_0^\infty t^{\frac{s}{2}-1}|\omega+t|^{-\frac{d}{2p}}dt<\infty
\end{equation}
that is convergent in zero for $s>0$ and at infinity for $s<\frac{d}{p}$.
  
\end{proof}}

\subsection{The case $s<d/p -(d-2)$}
{ In this subsection we prove the inverse inclusion 
$$H^{s,p}_\alpha{(\mathbb{R}^d)}\subseteq H^{s,p}{(\mathbb{R}^d)}$$
It sufficient to show that the following inequality holds
\begin{equation}\label{eq.inclusion2}
      \| (\omega-\Delta)^{\frac{s}{2}} (\omega-\Delta_\alpha)^{-\frac{s}{2}}f\|_{L^p{(\mathbb{R}^d)}}\lesssim\|f\|_{L^p{(\mathbb{R}^d)}},
\end{equation}
when $s<\frac{d}{p}-d+2.$
Thanks to the equivalence
\begin{equation}
    \| (\omega-\Delta)^{\frac{s}{2}} F\|_{L^p{(\mathbb{R}^d)}} \sim \|  F\|_{L^p{(\mathbb{R}^d)}}  + \| (-\Delta)^{\frac{s}{2}} F\|_{L^p{(\mathbb{R}^d)}} 
\end{equation}
 we only need to prove
    \begin{equation}
      \left\| (-\Delta)^{\frac{s}{2}} \left(  (\omega-\Delta_\alpha)^{-\frac{s}{2}}f -  (\omega-\Delta)^{-\frac{s}{2}}f \right)\right\|_{L^p{(\mathbb{R}^d)}}\lesssim\|f\|_{L^p{(\mathbb{R}^d)}}.
\end{equation}
This estimate will be proven in the next Proposition with the additional hypothesis $s>\frac{d}{p}-d$, that in this case is always satisfied because $s>0$, but it will be crucial in the next subsection.

 \begin{Proposition}\label{pr.FractionalEstimate} Let $\omega>E_\alpha$. If $\frac{d}{p}-d<s<\frac{d}{p}-d+2$, then the following estimate holds
    \begin{equation}\label{eq.estp36}
      \left\| (-\Delta)^{\frac{s}{2}}\int_0^\infty t^{-s/2} \mathbb{B}_{\omega+t} (f)dt\right\|_{L^p{(\mathbb{R}^d)}}\lesssim\|f\|_{L^p{(\mathbb{R}^d)}},
\end{equation}
with $\mathbb{B}_\omega$ defined in \eqref{eq.remB7}.
\end{Proposition}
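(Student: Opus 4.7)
The plan is to reduce the estimate to a pointwise bound on
$$F(x) := (-\Delta)^{s/2}\int_0^\infty t^{-s/2}\,\mathbb{B}_{\omega+t}(f)(x)\,dt$$
of the form $|F(x)| \lesssim \|f\|_{L^p}\,|x|^{-d/p}$ plus a fast-decaying remainder, so as to obtain weak-type $(p,p)$, and then to upgrade it to strong-type $(p,p)$ via Marcinkiewicz interpolation using two nearby exponents in the admissible range.

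First I would move $(-\Delta)^{s/2}$ inside the integral and estimate each factor: H\"older combined with Proposition \ref{lemA3} (using $\tfrac{d}{2}-1-\tfrac{d}{2p'}=\tfrac{d}{2p}-1$) gives $|\langle f,\mathbb{G}_{\omega+t}\rangle|\lesssim \|f\|_{L^p}|\omega+t|^{d/(2p)-1}$; estimate \eqref{eq.w bound} gives $|\alpha+c(\omega+t)|\gtrsim |\omega+t|^{(d-2)/2}$; and the rescaling \eqref{eq.rescalingG} together with homogeneity of $(-\Delta)^{s/2}$ yields
$$(-\Delta)^{s/2}\mathbb{G}_{\omega+t}(x)=|\omega+t|^{(d-2)/2+s/2}[(-\Delta)^{s/2}\mathbb{G}_1](\sqrt{\omega+t}\,x).$$
Substituting these into the integral produces the key pointwise bound
$$|F(x)|\lesssim \|f\|_{L^p}\int_0^\infty t^{-s/2}|\omega+t|^{d/(2p)-1+s/2}\bigl|[(-\Delta)^{s/2}\mathbb{G}_1](\sqrt{\omega+t}\,x)\bigr|\,dt.$$

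The next step is to make the change of variable $\sigma=\sqrt{\omega+t}\,|x|$ (splitting at $t=\omega$ to handle the $t^{-s/2}$ singularity at $0$). For the range $t>\omega$ the change of variable collapses the $|\omega+t|$-powers into a prefactor $|x|^{-d/p}$ times
$$\int_{|x|\sqrt{\omega}}^{\infty}\sigma^{d/p-1}\bigl|[(-\Delta)^{s/2}\mathbb{G}_1](\sigma)\bigr|\,d\sigma,$$
whose convergence I would read off directly from Lemma \ref{l.dsgr06}: the $\sigma^{2-d-s}$ singularity at the origin is integrated against $\sigma^{d/p-1}$ provided $s<\tfrac{d}{p}-d+2$, and the $\sigma^{-d-s}$ decay at infinity gives convergence as soon as $s>\tfrac{d}{p}-d$. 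The complementary range $t\leq\omega$ contributes a term whose pointwise size is controlled by $\omega^{d/(2p)}|[(-\Delta)^{s/2}\mathbb{G}_1](\sqrt{\omega}\,x)|$, i.e.\ a function in $L^p$ that is less singular at the origin than $|x|^{-d/p}$ (again thanks to Lemma \ref{l.dsgr06} and the upper bound on $s$) and rapidly decaying at infinity.

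Combining, one obtains $|F(x)|\lesssim \|f\|_{L^p}(|x|^{-d/p}+h(x))$ with $h\in L^p$, hence a weak-type $(p,p)$ estimate. Since the open interval $(\tfrac{d}{p}-d,\tfrac{d}{p}-d+2)$ is stable under small perturbations of $p$, I can run the same argument at two exponents $p_0<p<p_1$ in the admissible range and invoke Marcinkiewicz interpolation to pass from weak to strong $(p,p)$, completing the proof. The main obstacle, and the place where the sharp range constraints in the statement actually enter, is the analysis of the $\sigma$-integral after the change of variables: both endpoints $s=\tfrac{d}{p}-d$ (infinity of $\sigma$) and $s=\tfrac{d}{p}-d+2$ (origin of $\sigma$) are borderline, and the pointwise bounds of Lemma \ref{l.dsgr06} must be used carefully to extract the exact $|x|^{-d/p}$ profile rather than a weaker power.
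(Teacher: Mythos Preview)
Your approach is correct and follows essentially the same strategy as the paper: reduce to a pointwise bound of the form $|F(x)|\lesssim\|f\|_{L^p}|x|^{-d/p}$ (plus an $L^p$ remainder), obtain the weak-type $(p,p)$ estimate, and conclude by Marcinkiewicz interpolation; both arguments rest on H\"older for $\langle f,\mathbb{G}_{\omega+t}\rangle$, the bound \eqref{eq.w bound}, the rescaling \eqref{eq.rescalingG}, and the asymptotics of $(-\Delta)^{s/2}\mathbb{G}_1$ from Lemma~\ref{l.dsgr06}.

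The only genuine difference lies in how the $t$-integral is estimated. The paper inserts the decomposition of Lemma~\ref{l.dsgr06} directly, splitting into a cut-off singular piece $I_1$ and a smooth tail piece $I_2$, and then performs a case analysis on $|x|$ relative to $1/\sqrt{\omega}$ for $I_2$. You instead split the $t$-integral at $t=\omega$: on $\{t>\omega\}$ the substitution $\sigma=\sqrt{\omega+t}\,|x|$ collapses everything to $|x|^{-d/p}$ times a fixed convergent $\sigma$-integral (the two endpoint conditions on $s$ being exactly the integrability of $\sigma^{d/p-1}|(-\Delta)^{s/2}\mathbb{G}_1(\sigma)|$ at $0$ and $\infty$), while on $\{t\le\omega\}$ the argument $\sqrt{\omega+t}\,|x|$ stays in a bounded ratio and yields an explicit $L^p$ function. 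This repackaging is somewhat cleaner than the paper's $I_1/I_2$ case analysis, though it encodes the same information. Two small remarks: your phrase ``to handle the $t^{-s/2}$ singularity at $0$'' is inaccurate when $s\le 0$ (which does occur in the stated range for $d=3$), though the split at $t=\omega$ is still the right move since its real purpose is to isolate the regime $t\sim\omega+t$; and the lower limit of your $\sigma$-integral should be $|x|\sqrt{2\omega}$ rather than $|x|\sqrt{\omega}$, which is immaterial.
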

\begin{proof}
A direct computation with the rescaling inequality
\begin{equation}
    \|(-\Delta)^{\frac{s}{2}} \mathbb{G}_{\omega+t}\|_{L^p{(\mathbb{R}^d)}}\leq { |\omega+t|^{\frac{d}{2}-1-\frac{d}{2p}-\frac{s}{2}}\|(-\Delta)^{\frac{s}{2}}\mathbb{G}_1\|_{L^p{(\mathbb{R}^d)}}}
\end{equation}
and H\"older, is not sufficient, so we need again to estimate the integral pointwise, to apply Marcinkiewicz theorem.

We have
\begin{equation}\label{eq.1pt1}
\begin{aligned}
    & \left| (-\Delta)^{\frac{s}{2}}\int_0^\infty t^{-s/2} \mathbb{G}_{\omega+t}(x) \langle f,\mathbb{G}_{\omega+t} \rangle  \frac{dt}{\alpha + c(\omega+t)} \right|  \\
     & \lesssim\|f\|_{L^p{(\mathbb{R}^d)}}\int_0^\infty t^{-s/2} \left|  (-\Delta)^{\frac{s}{2}} \mathbb{G}_{\omega+t}(x) \right|
      \|\mathbb{G}_{\omega+t}\|_{L^{p^\prime}{(\mathbb{R}^d)}}\frac{dt}{|\omega+t|^{(d-2)/2}}  \\
      & \lesssim \|f\|_{L^p{(\mathbb{R}^d)}}\int_0^\infty t^{-\frac{s}{2}}|\omega+t|^{ -\frac{d}{2}+\frac{d}{2p}} \left|  (-\Delta)^{\frac{s}{2}} \mathbb{G}_{\omega+t}(x) \right| dt.
\end{aligned}
\end{equation}

Further we use the asymptotic
\begin{equation}
 | \partial_y^\alpha \mathbb{G}_\omega(y)| \lesssim \partial_r^k \mathbb G_0(r)  , \ \ r=|y| \lesssim 1, \ |\alpha|=k \leq 2,
\end{equation}
where
\begin{equation}
   \mathbb G_0(r) = 
   \left\{ \begin{aligned}
       & -\frac{1}{2\pi} \ln (r),  & d=2, \\
      & \frac{1}{4\pi r}, & d=3.  
    \end{aligned}\right.
    \end{equation}
For $r=|y| \gg 1$ we have
\begin{equation}
 | \partial_y^\alpha \mathbb{G}_\omega(y)| \lesssim  e^{-r}  , \ \ r=|y| \gg 1, \ |\alpha|=k \leq 2,
\end{equation}

Now we take cut off radial function $\Psi{\in C^\infty_0([0,\infty))}$ that is $1$ near the origin and we have
\begin{equation}
   \mathbb{G}_\omega(y) = \Psi(|y|)\mathbb G_0(|y|) + h(|y|), 
\end{equation}
where $h(y)$ is a $H^1 \cap C^1$ function that is $0$ near the origin and decays exponentially with all derivatives up to order $1.$
Then from Lemma \ref{l.dsgr06}, { if $s>2-d$} (see also \cite{BCS2015} for more details)
\begin{equation}\label{mest82}
    |(-\Delta)^{s/2}  {\mathbb{G}_1}(y)| \lesssim  \Psi(|y|)|y|^{-d+2-s} + h_s(y), 
\end{equation}
where $h_s(y)$ decays as  $|y|^{-d-s}$ at infinity and it is {constant } near the origin. { We note that $\frac{d}{p}-d<2-d$, but in the case $s\leq2-d$ the fractional derivative of $\mathbb{G}_1$ is not singular in zero, so the proof is easier.}

We use {\eqref{eq.rescalingG}}
\begin{equation}
    \mathbb{G}_{\omega+t}(x) = {|\omega+t|^{(d-2)/2} \mathbb{G}_1}( \sqrt{\omega+t} x),
\end{equation}
then \eqref{mest82} implies that
\begin{eqnarray}\label{mest97}
    |(-\Delta)^{s/2}  \mathbb{G}_{\omega+t}(x)| \lesssim  \Psi(\sqrt{|\omega+t|}|x|)|x|^{-d+2-s} + |\omega+t|^{s/2+(d-2)/2}h_s(\sqrt{\omega+t}x).
\end{eqnarray}
Now we can complete the estimates in \eqref{eq.1pt1} and find
\begin{equation}\label{eq.1pt07}
\begin{aligned}
     \left| (-\Delta)^{\frac{s}{2}}\int_0^\infty t^{-s/2} \mathbb{G}_{\omega+t}(x) \langle f,\mathbb{G}_{\omega+t} \rangle  \frac{dt}{\alpha + c(\omega+t)} \right| \lesssim \|f\|_{L^p{(\mathbb{R}^d)}}(I_1(x)+I_2(x)),
\end{aligned}
\end{equation}
with
\begin{equation}
    I_1(x)= \int_0^\infty t^{-\frac{s}{2}}|\omega+t|^{-\frac{d}{2}+\frac{d}{2p}} \Psi (\sqrt{\omega+t}|x|)|x|^{-d+2-s}   dt 
\end{equation}
and 
\begin{equation}
\begin{aligned}
     I_2(x)&= \int_0^\infty t^{-\frac{s}{2}} |\omega+t|^{-\frac{d}{2}+\frac{d}{2p}}|\omega+t|^{\frac{s}{2}+\frac{d}{2}-1} h_s(\sqrt{\omega+t} |x|)
      dt \\
      & = \int_0^\infty t^{-\frac{s}{2}} |\omega+t|^{\frac{s}{2}+\frac{d}{2p}-1} h_s(\sqrt{\omega+t} |x|)
      dt.
\end{aligned}
\end{equation}
To estimate $I_1$ we only need to compute explicitly the integral
\begin{equation}\label{eq.estimate I1}
    I_1(x)\lesssim\int_0^{\frac{2}{|x|^2}} t^{-\frac{s}{2}-\frac{d}{2}+\frac{d}{2p}} dt|x|^{-d+2-s}   \lesssim |x|^{-\frac{d}{p}}.
\end{equation}
We used that and the fact that $\frac{d}{2p}-\frac{d}{2}<0$. We note that the integral is not divergent in zero thanks to the hypothesis $s<\frac{d}{p}-d+2$. 

To estimate $I_2$ we distinguish two cases. We first consider the case ${|x|>\frac{1}{\sqrt{\omega}}}$, that implies $\sqrt{|\omega+t|}|x|>1$. So considering the behaviour of $h_s$ we have
\begin{equation}
    I_2(x)\mathbb{1}_{\{|x|>{\frac{1}{\sqrt{\omega}}}\}}\lesssim\int_0^\infty t^{-\frac{s}{2}}|\omega+t|^{\frac{d}{2p}-1-\frac{d}{2}}dt |x|^{-d-s}\mathbb{1}_{\{|x|>{\frac{1}{\sqrt{\omega}}}\}}.
\end{equation}
The integral is finite because $s<2$ and $s>\frac{d}{p}-d$, so
\begin{equation}\label{eq.estimate I2.1}
    I_2(x)\mathbb{1}_{\{|x|>{\frac{1}{\sqrt{\omega}}}\}}\lesssim |x|^{-d-s}\mathbb{1}_{\{|x|>{\frac{1}{\sqrt{\omega}}}\}}\lesssim |x|^{-\frac{d}{p}},
\end{equation}
also because $s>\frac{d}{p}-d$.
We turn to the case $|x|\leq{\frac{1}{\sqrt{\omega}}}$  and we split the integral in two intervals. Using the behaviors of $h_s$, we can write
\begin{equation}\label{eq.estimate I2.2}
\begin{aligned}
   I_2(x)\mathbb{1}_{\{|x|\leq{\frac{1}{\sqrt{\omega}}}\}}
   \lesssim&\int_0^{\frac{1}{|x|^2}-{\omega}}t^{-\frac{s}{2}-\frac{d}{2}+\frac{d}{2p}} dt|x|^{-s-d+2}{\mathbb{1}_{\{|x|\leq\frac{1}{\sqrt{\omega}}\}}}\\
   &+\int_{\frac{1}{|x|^2}-{\omega}}^\infty t^{-\frac{s}{2}}|\omega+t|^{-\frac{d}{2}+\frac{d}{2p}-1} dt|x|^{-s-d}\mathbb{1}_{\{|x|\leq{\frac{1}{\sqrt{\omega}}}\}}\\
    \lesssim&|x|^{-\frac{d}{p}}{\mathbb{1}_{\{|x|\leq\frac{1}{\sqrt{\omega}}\}}}+ |x|^{-s-d}{(|x|^{-2}-\omega)^{-\frac{s}{2}-\frac{d}{2}+\frac{d}{2p}}}\mathbb{1}_{\{|x|\leq{\frac{1}{\sqrt{\omega}}}\}}\lesssim|x|^{-\frac{d}{p}},
\end{aligned}
\end{equation}
{
where, in  the last inequality, we used $(|x|^{-2}-\omega)\mathbb{1}_{\{|x|\leq{\frac{1}{\sqrt{\omega}}}\}}\gtrsim|x|^{-2}\mathbb{1}_{\{|x|\leq{\frac{1}{\sqrt{\omega}}}\}}$ and the hypothesis $s>\frac{d}{p}-d$.

We give more details how to estimate the two integrals 
 in \eqref{eq.estimate I2.2}. In the  interval $0\leq t\leq|x|^{-2}-\omega$ we have $\sqrt{\omega+t}|x|<1$, so $h_s(\sqrt{\omega+t}|x|)\lesssim1$. So we can write 
\begin{equation}
    \begin{aligned}
        &\int_0^{\frac{1}{|x|^2}-\omega} t^{-\frac{s}{2}} |\omega+t|^{\frac{s}{2}+\frac{d}{2p}-1} dt\mathbb{1}_{\{|x|\leq\frac{1}{\sqrt{\omega}}\}}= \int_0^{\frac{1}{|x|^2}-\omega} t^{-\frac{s}{2}} |\omega+t|^{-\frac{d}{2}+\frac{d}{2p}+\frac{s}{2}+\frac{d}{2}-1} dt \mathbb{1}_{\{|x|\leq\frac{1}{\sqrt{\omega}}\}}\\
        &\lesssim  \int_0^{\frac{1}{|x|^2}-\omega} t^{-\frac{s}{2}-\frac{d}{2}+\frac{d}{2p}}dt |x|^{-s-d+2}\mathbb{1}_{\{|x|\leq\frac{1}{\sqrt{\omega}}\}}\lesssim |x|^{-\frac{d}{p}}\mathbb{1}_{\{|x|\leq\frac{1}{\sqrt{\omega}}\}}.
    \end{aligned}
\end{equation}
In order we used: \begin{itemize}
    \item $|\omega+t|^{-\frac{d}{2}+\frac{d}{2p}}\leq t^{-\frac{d}{2}+\frac{d}{2p}}$, given by $-\frac{d}{2}-\frac{d}{2p}<0$;\\
    \item $|\omega+t|^{\frac{s}{2}+\frac{d}{2}-1}< |x|^{-s-d+2}$, given by $|\omega+t|<\frac{1}{|x|^2}$ and $s+d-2\geq0$;
    
     \item$\int_0^{\frac{1}{|x|^2}-\omega} t^{-\frac{s}{2}-\frac{d}{2}+\frac{d}{2p}}dt=\frac{1}{1-\frac{s}{2}-\frac{d}{2}+\frac{d}{2p}}(|x|^{-2}-\omega)^{1-\frac{s}{2}-\frac{d}{2}+\frac{d}{2p}}\lesssim |x|^{s+d-\frac{d}{p}-2},$ where the convergence of the integral and the inequality are given by the hypothesis $s<\frac{d}{p}-d+2$.
\end{itemize}
For the interval $t\geq |x|^{-2}-\omega$, we use the fact that $h_s(\sqrt{\omega+t}|x|)\lesssim |\omega+t|^{-\frac{d}{2}-\frac{s}{2}}|x|^{-d-s}$ and the hypothesis $s>\frac{d}{p}-p$, and we have
\begin{equation}
\begin{aligned}
    \int_{\frac{1}{|x|^2}-\omega}^\infty t^{-\frac{s}{2}}|\omega+t|^{\frac{s}{2}+\frac{d}{2p}-1}h_s(\sqrt{\omega+t}|x|)dt&\lesssim\int_{\frac{1}{|x|^2}-\omega}^\infty t^{-\frac{s}{2}}|\omega+t|^{-\frac{d}{2}+\frac{d}{2p}-1}dt|x|^{-d-s}\\
    &\lesssim |x|^{-s-d}(|x|^{-2}-\omega)^{-\frac{s}{2}-\frac{d}{2}+\frac{d}{2p}}.
    \end{aligned}
\end{equation}

}

{Finally, substituting \eqref{eq.estimate I1}, \eqref{eq.estimate I2.1} and \eqref{eq.estimate I2.2} in \eqref{eq.1pt07}}, we arrive at
\begin{equation}\label{eq.estp09}
      \left| (-\Delta)^{\frac{s}{2}}\int_0^\infty t^{-s/2} \mathbb B_{\omega+t} (f)(x)dt\right|\lesssim |x|^{-d/p}\|f\|_{L^p{(\mathbb{R}^d)}},
\end{equation}
that implies
\begin{equation}\label{eq.LpwDelta}
      \left\| (-\Delta)^{\frac{s}{2}}\int_0^\infty t^{-s/2} \mathbb B_{\omega+t} (f)dt\right\|_{L^{p,\infty}{(\mathbb{R}^d)}}\lesssim\|f\|_{L^p{(\mathbb{R}^d)}}.
\end{equation}

We  complete the proof of \eqref{eq.estp36} by using Marcinkiewicz  interpolation theorem.
\end{proof}
The two embeddings \eqref{eq.inclusion2}, valid for $s<\frac{d}{p}-d+2$, and {\eqref{eq.inclusion1}}, valid for $s<\frac{d}{p}$ prove the equivalence of the two spaces.
\begin{Lemma}
    If ${0<}s<\frac{d}{p}-d+2$, then the perturbed domain coincides with the unperturbed one:
    $$H^{s,p}_\alpha{(\mathbb{R}^d)}=H^{s,p}{(\mathbb{R}^d)}.$$
\end{Lemma}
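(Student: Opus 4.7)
The plan is to combine the two one-sided embeddings already at hand: Lemma \ref{l.inclusion1} gives $H^{s,p}(\mathbb{R}^d)\hookrightarrow H^{s,p}_\alpha(\mathbb{R}^d)$, and Proposition \ref{pr.FractionalEstimate} will be used to produce the reverse inclusion $H^{s,p}_\alpha(\mathbb{R}^d)\hookrightarrow H^{s,p}(\mathbb{R}^d)$. The only thing to check is that the hypothesis $0<s<\frac{d}{p}-d+2$ is compatible with the assumptions of both results.

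First I would verify the compatibility of ranges. Lemma \ref{l.inclusion1} requires $sp<d$, i.e.\ $s<d/p$, which is weaker than $s<d/p-d+2$ when $d=2$ (equivalent in that case) and strictly weaker when $d=3$. Therefore the inequality \eqref{eq.inclusion1} applies and yields $\|\phi\|_{H^{s,p}_\alpha}\lesssim\|\phi\|_{H^{s,p}}$ for every $\phi\in H^{s,p}(\mathbb{R}^d)$. On the other hand, Proposition \ref{pr.FractionalEstimate} requires $\frac{d}{p}-d<s<\frac{d}{p}-d+2$; under the assumption \eqref{eq. condition p} on $p$, we have $d/p-d<0<s$, so the lower bound is automatic and only the upper bound must be assumed, which is exactly the hypothesis of the current lemma.

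For the reverse inclusion, I would take any $\phi\in H^{s,p}_\alpha(\mathbb{R}^d)$, write $\phi=(\omega-\Delta_\alpha)^{-s/2}f$ with $f\in L^p(\mathbb{R}^d)$ (with $\omega>E_\alpha$), and expand via the identity \eqref{eq.e12pf55}:
\begin{equation}
\phi = (\omega-\Delta)^{-s/2}f + \frac{\sin(s\pi/2)}{\pi}\int_0^\infty t^{-s/2}\mathbb{B}_{\omega+t}(f)\,dt .
\end{equation}
The first summand lies in $H^{s,p}(\mathbb{R}^d)$ by definition, with norm controlled by $\|f\|_{L^p}$. To put the remaining term in $H^{s,p}(\mathbb{R}^d)$, I would use the equivalence $\|F\|_{H^{s,p}}\sim\|F\|_{L^p}+\|(-\Delta)^{s/2}F\|_{L^p}$: the $L^p$-piece is bounded using the estimate \eqref{eq.w bound} together with $\|\mathbb{G}_{\omega+t}\|_{L^p}\lesssim|\omega+t|^{\frac{d}{2}-1-\frac{d}{2p}}$ from Proposition \ref{lemA3} and H\"older's inequality \eqref{eq.Holder} (the resulting integral in $t$ converges thanks to $s\in(0,2)$), while the $(-\Delta)^{s/2}$-piece is controlled directly by Proposition \ref{pr.FractionalEstimate}. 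This yields $\|\phi\|_{H^{s,p}}\lesssim\|f\|_{L^p}=\|\phi\|_{H^{s,p}_\alpha}$ and hence $H^{s,p}_\alpha(\mathbb{R}^d)\subseteq H^{s,p}(\mathbb{R}^d)$.

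The proof is essentially bookkeeping, since the two nontrivial estimates have been isolated in the previous lemma and proposition; there is no real obstacle once one observes that the condition $s>d/p-d$ needed by Proposition \ref{pr.FractionalEstimate} is vacuous in the parameter range \eqref{eq. condition p}. The only mild care point is to check that applying $(\omega-\Delta)^{s/2}$ to the integral in \eqref{eq.e12pf55} is legitimate, which follows because the operator is closed on $L^p(\mathbb{R}^d)$ and each term of the integrand lies in its domain with absolutely convergent bounds.
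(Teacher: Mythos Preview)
Your proposal is correct and follows essentially the same approach as the paper: the paper's proof is a one-line combination of the two inclusions \eqref{eq.inclusion1} and \eqref{eq.inclusion2}, and the preceding discussion in the subsection spells out exactly the reduction you give (the equivalence $\|(\omega-\Delta)^{s/2}F\|_{L^p}\sim\|F\|_{L^p}+\|(-\Delta)^{s/2}F\|_{L^p}$, the use of \eqref{eq.e12pf55}, and the observation that $s>d/p-d$ is automatic since $s>0$ and $p$ satisfies \eqref{eq. condition p}). Your added remark about the legitimacy of applying $(\omega-\Delta)^{s/2}$ under the integral sign is a reasonable point of care not made explicit in the paper.
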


\subsection{The case $s>d/p-(d-2)$}
 The main idea of this subsection is that the difference of two Green functions is more regular as stated in subsection \ref{s.regularityG}.
 
    We can decompose
    \begin{equation}\label{cdec5m}
         (\omega-\Delta_\alpha)^{-s/2}f = (\omega-\Delta)^{-s/2}f + \Gamma_s(f) +   C_s(f) \mathbb G_\omega  ,
      \end{equation}
      where
    \begin{equation}
       { \Gamma_s} : f \in L^p \mapsto \Gamma(f)
    \end{equation}
    is defined (at least for $f \in S(\mathbb{R}^d)$ by
   
    \begin{equation}\label{eq.defGamma}
        \begin{aligned}
           & \Gamma_s(f) =  -\frac{\sin\left(\frac{s\pi}{2}\right)}{\pi }\int_0^\infty   t^{-s/2}  \langle f, \mathbb{G}_{\omega+t} \rangle (\mathbb{G}_{\omega+t}-\mathbb{G}_\omega)\frac{dt}{\alpha + c(\omega+t)}  ,
        \end{aligned}
    \end{equation}

    and
    \begin{equation} \label{eq.defC}
     C_s:f \in S(\mathbb{R}^2) \mapsto  C_s(f) =   -\frac{\sin\left(\frac{s\pi}{2}\right)}{\pi }\int_0^\infty t^{-s/2}  \langle f, \mathbb{G}_{\omega+t} \rangle \frac{dt}{\alpha + c(\omega+t)}.
      \end{equation}

    \begin{Proposition}\label{pr.C}
        If ${\frac{d}{p}-d+2<s<2}$, we  have the estimate
        \begin{equation} \label{eq.escf60}
       | C_s(f) | \lesssim \|f\|_{L^p{(\mathbb{R}^d)}}.
    \end{equation}
    \end{Proposition}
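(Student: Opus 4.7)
The plan is to estimate $C_s(f)$ directly under the integral by bounding each factor of the integrand in terms of $|\omega+t|$ and then checking that the resulting integral converges precisely in the claimed range of $s$.

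First, I would apply H\"older's inequality to the pairing, giving
\[
|\langle f, \mathbb{G}_{\omega+t}\rangle| \leq \|f\|_{L^p(\mathbb{R}^d)} \|\mathbb{G}_{\omega+t}\|_{L^{p'}(\mathbb{R}^d)}.
\]
Here the conjugate exponent $p'$ lies in $[1,\infty)$ when $d=2$ and in $(3/2,3)$ when $d=3$ (thanks to the constraint $p>3/2$ in \eqref{eq. condition p}), so Proposition \ref{lemA3} applies and yields
\[
\|\mathbb{G}_{\omega+t}\|_{L^{p'}(\mathbb{R}^d)} \lesssim |\omega+t|^{\frac{d}{2}-1-\frac{d}{2p'}} = |\omega+t|^{\frac{d}{2p}-1}.
\]
Combining this with the lower bound \eqref{eq.w bound} for $|\alpha+c(\omega+t)|$, namely $|\alpha+c(\omega+t)| \gtrsim |\omega+t|^{\frac{d}{2}-1}$, one obtains
\[
\left|\frac{\langle f, \mathbb{G}_{\omega+t}\rangle}{\alpha+c(\omega+t)}\right| \lesssim \|f\|_{L^p(\mathbb{R}^d)} \, |\omega+t|^{\frac{d}{2p}-\frac{d}{2}}.
\]

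Substituting this into the definition \eqref{eq.defC} of $C_s(f)$ gives
\[
|C_s(f)| \lesssim \|f\|_{L^p(\mathbb{R}^d)} \int_0^\infty t^{-s/2} |\omega+t|^{\frac{d}{2p}-\frac{d}{2}} \, dt,
\]
so the proof reduces to showing that this one-variable integral is finite. Near $t=0$ the integrand behaves like $t^{-s/2}$, which is integrable since $s<2$. Near $t=\infty$ the integrand behaves like $t^{-s/2 + d/(2p)-d/2}$, and this is integrable at infinity precisely when $-s/2 + d/(2p) - d/2 < -1$, i.e.\ $s > d/p - d + 2$. Both conditions match the assumption $\tfrac{d}{p}-d+2 < s < 2$, and the estimate \eqref{eq.escf60} follows.

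The argument is entirely routine once the preceding material is in place; the only point to watch is that $p'$ must lie in the range of validity of Proposition \ref{lemA3}, which is where the lower bound $p>3/2$ in dimension $d=3$ is used. There is no real obstacle beyond bookkeeping of exponents.
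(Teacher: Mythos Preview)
Your proof is correct and follows essentially the same approach as the paper: bound the pairing via H\"older and Proposition~\ref{lemA3} (which is exactly the content of \eqref{eq.Holder}), use \eqref{eq.w bound} for the denominator, and then check convergence of the resulting one-variable integral at $0$ and at $\infty$. Your version is slightly more explicit in verifying that $p'$ falls in the admissible range of Proposition~\ref{lemA3}, but otherwise the arguments coincide.
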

\begin{proof}

We use the estimates { \eqref{eq.Holder}, \eqref{eq.w bound},} therefore
\begin{equation}\label{eq.escf82}
\begin{aligned}
        |C_s(f)| &\lesssim \int_0^\infty t^{-s/2} |\omega+t|^{\frac{d}{2p}-1} \frac{dt}{|\alpha+c(\omega+t))|} \|f\|_{L^p{(\mathbb{R}^d)}}  \\ &\lesssim\int_0^1{t^{-\frac{s}{2}}}{dt}\|f\|_{L^p{(\mathbb{R}^d)}}+\int_1^\infty {t^{-\frac{s}{2}+\frac{d}{2p}-\frac{d}{2}}}{dt}\|f\|_{L^p{(\mathbb{R}^d)}}\lesssim\|f\|_{L^p{(\mathbb{R}^d)}}.
\end{aligned}
\end{equation}
\end{proof}
\begin{Proposition}\label{pr.Gamma} {If $\frac{d}{p}-d+2<s<2$,  we have the estimate}
    $$\| \Gamma_s(f) \|_{H^{s,p}{(\mathbb{R}^d)}} \lesssim \|f\|_{L^p{(\mathbb{R}^d)}}.$$
\end{Proposition}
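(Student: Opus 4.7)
The plan is to verify the bound separately for the $L^p$ norm and the Riesz-potential norm of order $s$, using the equivalence $\|F\|_{H^{s,p}(\mathbb{R}^d)} \sim \|F\|_{L^p(\mathbb{R}^d)} + \|(-\Delta)^{s/2}F\|_{L^p(\mathbb{R}^d)}$.

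For the $L^p$ bound, I would exploit the regularization identity \eqref{eq.diff2G}, rewritten as $\mathbb{G}_{\omega+t} - \mathbb{G}_\omega = -t(\omega+t-\Delta)^{-1}\mathbb{G}_\omega$. Combined with the $L^p$-resolvent bound \eqref{eq.resDelta} and \eqref{eq.estpg3}, this gives $\|\mathbb{G}_{\omega+t}-\mathbb{G}_\omega\|_{L^p} \lesssim t|\omega+t|^{-1}$. Substituting into the integral defining $\Gamma_s$ and using the dual H\"older estimate \eqref{eq.Holder} on the pairing together with the spectral lower bound \eqref{eq.w bound}, one obtains
$$\|\Gamma_s(f)\|_{L^p(\mathbb{R}^d)} \lesssim \|f\|_{L^p(\mathbb{R}^d)}\int_0^\infty t^{1-s/2}|\omega+t|^{d/(2p)-d/2-1}\,dt,$$
which is finite precisely because $s < 2$ controls integrability at $t=0$ and the hypothesis $s > d/p - d + 2$ handles the tail $t\to\infty$.

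For the Riesz-potential bound $\|(-\Delta)^{s/2}\Gamma_s(f)\|_{L^p} \lesssim \|f\|_{L^p}$, the strategy parallels Proposition \ref{pr.FractionalEstimate}: establish a pointwise bound $|(-\Delta)^{s/2}\Gamma_s(f)(x)| \lesssim |x|^{-d/p}\|f\|_{L^p}$, placing the function in weak $L^p$, and conclude via Marcinkiewicz interpolation. The new ingredient is cancellation in the difference $(-\Delta)^{s/2}(\mathbb{G}_{\omega+t} - \mathbb{G}_\omega)$: Lemma \ref{l.dsgr06} applied via rescaling yields
$$|(-\Delta)^{s/2}\mathbb{G}_\sigma(x)| \lesssim \Psi(\sqrt{\sigma}|x|)|x|^{-d+2-s} + \sigma^{(d-2+s)/2}|h_s(\sqrt{\sigma}x)|$$
for $\sigma=\omega$ and $\sigma=\omega+t$. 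Whenever both $\sqrt{\omega+t}|x|\le 1$ and $\sqrt{\omega}|x|\le 1$, the $t$-independent leading singularity $|x|^{-d+2-s}$ cancels exactly in the difference, leaving only the smoother $h_s$-terms. This cancellation is exactly what allows convergence in the regime $s>d/p-d+2$, where the naive pointwise bound from Proposition \ref{pr.FractionalEstimate} would diverge at $t=0$.

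The main obstacle is the careful bookkeeping: one must split the $t$-integral at the threshold $t \sim |x|^{-2}-\omega$ (separating the cancellation regime from the non-cancellation regime) and additionally distinguish the spatial regions $|x| \lessgtr \omega^{-1/2}$. In the cancellation regime one uses $h_s \lesssim 1$; in the complementary regime one uses the decay $h_s(y) \lesssim |y|^{-d-s}$. Summing the contributions and exploiting the constraints $d/p - d + 2 < s < 2$ at each step yields the pointwise bound $|x|^{-d/p}\|f\|_{L^p}$, and Marcinkiewicz interpolation concludes the argument, exactly mirroring the final step in the proof of Proposition \ref{pr.FractionalEstimate}.
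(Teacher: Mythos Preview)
Your $L^p$ bound is fine (and slightly different from the paper's, which splits $\mathbb{G}_{\omega+t}-\mathbb{G}_\omega$ into its two summands and invokes Proposition~\ref{pr.C}; both work).

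For the $(-\Delta)^{s/2}$ bound, however, there is a genuine gap. Lemma~\ref{l.dsgr06} only gives an \emph{upper bound} on $|(-\Delta)^{s/2}\mathbb{G}_1|$; from two inequalities of the form $|A_\sigma(x)|\lesssim \Psi(\sqrt\sigma|x|)|x|^{-d+2-s}+\ldots$ you cannot infer cancellation in the difference $A_{\omega+t}-A_\omega$. To make your argument go through you would need an \emph{exact} asymptotic decomposition $(-\Delta)^{s/2}\mathbb{G}_\sigma(x)=C_d\,|x|^{-d+2-s}+R_\sigma(x)$ near the origin, with a $\sigma$-independent leading constant and a controlled remainder. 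Such a decomposition can be extracted from the proof of Lemma~\ref{l.dsgr06} (the term $J_1$ carries the universal singularity), but you neither state nor prove it, and without it the ``exact cancellation'' step is unjustified.

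The paper avoids all of this by a one-line algebraic reduction that you are missing. From \eqref{eq.diff2G} one has $(\omega-\Delta)(\mathbb{G}_{\omega+t}-\mathbb{G}_\omega)=-t\,\mathbb{G}_{\omega+t}$, hence
\[
(\omega-\Delta)^{s/2}\Gamma_s(f)\;=\;-(\omega-\Delta)^{s/2-1}\int_0^\infty t^{-(s-2)/2}\,\mathbb{B}_{\omega+t}(f)\,dt.
\]
Setting $m=s-2$, the hypothesis $\frac{d}{p}-d+2<s<2$ is exactly the condition $\frac{d}{p}-d<m<\frac{d}{p}-d+2$, so Proposition~\ref{pr.FractionalEstimate} applies verbatim with exponent $m$ in place of $s$ and gives the desired $L^p$ bound. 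No new pointwise analysis, no case splitting, no cancellation argument is needed: the identity trades the difference $\mathbb{G}_{\omega+t}-\mathbb{G}_\omega$ for a gain of one power of $t$ and a drop of the fractional exponent by $2$, landing back in the regime already handled.
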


\begin{proof}
{The $L^p$ estimate is immediate, because }
 \begin{equation}\label{eq.esgf64bis}
 \begin{aligned}
     \| \Gamma_s(f) \|_{L^p{(\mathbb{R}^d)}} \lesssim  \int_0^\infty t^{-\frac{s}{2}}(\omega+t)^{\frac{d}{2p}-\frac{d}{2}}\|\mathbb{G}_{\omega+t}\|_{L^p{(\mathbb{R}^d)}}dt \|f\|_{L^p{(\mathbb{R}^d)}}+|C_s(f)|\lesssim\|f\|_{L^p{(\mathbb{R}^d)}}.
 \end{aligned}
    \end{equation}

 We have to estimate the integral
\begin{equation}
   (\omega-\Delta)^{\frac{s}{2}} \int_0^\infty   t^{-s/2}  \langle f, \mathbb{G}_{\omega+t} \rangle (\mathbb{G}_{\omega+t}-\mathbb{G}_\omega)\frac{dt}{\alpha + c(\omega+t)},
\end{equation}
that can be written as 
\begin{equation}
   (\omega-\Delta)^{\frac{s}{2}-1} \int_0^\infty   -t^{-s/2+1} \mathbb B_{\omega+t}(f){dt},
\end{equation}
because
$$(\omega-\Delta)^{\frac{s}{2}}(\mathbb{G}_{\omega+t}-\mathbb{G}_\omega)=(\omega-\Delta)^{\frac{s}{2}-1}(-t\mathbb{G}_{\omega+t}).$$
To estimate the fractional derivative, we note that the exponent $m=s-2<0$ satisfies the condition $\frac{d}{p}-d<m<\frac{d}{p}-d+2$, so  we can apply the Proposition \ref{pr.FractionalEstimate}.  The estimate \eqref{eq.estp36} gives
\begin{equation}
    \left\|(-\Delta)^{\frac{s}{2}-1} \int_0^\infty   -t^{-s/2+1}  \mathbb B_{\omega+t}(f) {dt}\right\|_{L^p{(\mathbb{R}^d)}}\lesssim \|f\|_{L^p{(\mathbb{R}^d)}},
\end{equation}
    that concludes the proof.
\end{proof}
Combining these results we obtain the decomposition  in a regular and singular part. 
\begin{Lemma} \label{l.decomp1} If  ${\frac{d}{p}-d+2<s<2}$ and $f\in L^p{(\mathbb{R}^d)}$, there exist unique  $g\in H^{s,p}{(\mathbb{R}^d)}$ and $C_s(f)$ functional, such that
\begin{equation}
    (\omega-\Delta_\alpha)^{-\frac{s}{2}}f=g+C_s(f)\mathbb{G}_\omega.
\end{equation}
\end{Lemma}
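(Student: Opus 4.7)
The plan is to obtain existence directly from the decomposition \eqref{cdec5m} together with the bounds established in Propositions~\ref{pr.C} and~\ref{pr.Gamma}, and then to deduce uniqueness from the fact that $\mathbb{G}_\omega \notin H^{s,p}(\mathbb{R}^d)$ in the range $s>\frac{d}{p}-d+2$.

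For the existence part, I would simply set
\begin{equation}
g := (\omega-\Delta)^{-s/2} f + \Gamma_s(f).
\end{equation}
The classical Bessel potential theory applied to the standard Sobolev space $H^{s,p}(\mathbb{R}^d)$ gives $(\omega-\Delta)^{-s/2}f \in H^{s,p}(\mathbb{R}^d)$ with $\|(\omega-\Delta)^{-s/2}f\|_{H^{s,p}(\mathbb{R}^d)} \lesssim \|f\|_{L^p(\mathbb{R}^d)}$. Proposition~\ref{pr.Gamma} provides $\Gamma_s(f) \in H^{s,p}(\mathbb{R}^d)$ with a bound in terms of $\|f\|_{L^p(\mathbb{R}^d)}$. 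Hence $g \in H^{s,p}(\mathbb{R}^d)$, and \eqref{cdec5m} combined with Proposition~\ref{pr.C} yields the claimed decomposition together with the bound $|C_s(f)|\lesssim \|f\|_{L^p(\mathbb{R}^d)}$, so that $C_s$ is a well-defined bounded linear functional on $L^p(\mathbb{R}^d)$.

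For uniqueness, I would assume the existence of two representations
\begin{equation}
g_1 + C_1 \mathbb{G}_\omega = g_2 + C_2 \mathbb{G}_\omega, \qquad g_1, g_2 \in H^{s,p}(\mathbb{R}^d), \quad C_1,C_2 \in \mathbb{C},
\end{equation}
and subtract to obtain $(C_1-C_2)\mathbb{G}_\omega = g_2 - g_1 \in H^{s,p}(\mathbb{R}^d)$. The regularity lemma at the end of Section~\ref{s.regularityG} shows $\mathbb{G}_1 \in H^{s,p}(\mathbb{R}^d) \iff s<\frac{d}{p}-d+2$, and this statement transfers to any admissible $\omega$ via the rescaling \eqref{eq.rescalingG} (which preserves finiteness of the $H^{s,p}$ norm) or equivalently by noting that \eqref{eq.diff2G} forces $\mathbb{G}_\omega-\mathbb{G}_1$ to be strictly more regular than the critical threshold. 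Since our standing hypothesis is $s>\frac{d}{p}-d+2$, we conclude $C_1=C_2$, and hence $g_1=g_2$.

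The only possibly delicate point is the transfer of the non-membership statement $\mathbb{G}_1 \notin H^{s,p}(\mathbb{R}^d)$ to $\mathbb{G}_\omega \notin H^{s,p}(\mathbb{R}^d)$ for arbitrary $\omega \in \mathbb{C} \setminus \sigma(\Delta_\alpha)$, but this is routine given the preliminary material already collected in Section~\ref{s.regularityG}. All other ingredients have been proved in the immediately preceding Propositions, so the lemma reduces to assembling them.
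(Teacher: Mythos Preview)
Your proof is correct and follows essentially the same approach as the paper: set $g=(\omega-\Delta)^{-s/2}f+\Gamma_s(f)$, invoke the elliptic estimate together with Propositions~\ref{pr.C} and~\ref{pr.Gamma} for existence, and use $\mathbb{G}_\omega\notin H^{s,p}(\mathbb{R}^d)$ for $s>\frac{d}{p}-d+2$ for uniqueness. Your discussion of transferring non-membership from $\mathbb{G}_1$ to $\mathbb{G}_\omega$ is slightly more detailed than the paper's one-line appeal to the regularity lemma, but the argument is the same.
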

\begin{proof}
It is sufficient to apply Propositions \ref{pr.C} and \ref{pr.Gamma} with the elliptic estimate
$$ \| (\omega-\Delta)^{-\frac{s}{2}}f \|_{H^{s,p}{(\mathbb{R}^d)}} \lesssim \|f\|_{L^p{(\mathbb{R}^d)}}.$$
In fact, we can choose
$g=g(f)= (\omega-\Delta)^{-\frac{s}{2}}f+\Gamma_s(f)\in H^{s,p}$, with $\Gamma_s$ defined in \eqref{eq.defGamma}
and $C_s(f)$ the functional defined in \eqref{eq.defC}. The uniqueness follows from the fact that $\mathbb{G}_\omega\in H^{s,p}{(\mathbb{R}^d)}\iff s<\frac{d}{p}-d+2$.
\end{proof}
\subsection{The case $\frac{d}{p}-d+2<s<\frac{d}{p}$}
We underline again that this case is non trivial only in dimension $d=3$. We have all the ingredients to prove the following characterization
\begin{Lemma}
    If $\frac{d}{p}-d+2<s<\frac{d}{p}$, then
    $$H^{s,p}_\alpha{(\mathbb{R}^d)}=H^{s,p}{(\mathbb{R}^d)}+\text{Span}{\{\mathbb{G}_\omega}    \}.$$
\end{Lemma}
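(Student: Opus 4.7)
The plan is to obtain the stated set equality by assembling the three lemmas already proved earlier in this section; essentially all the analytic work has been done. As observed in the excerpt, the range $\frac{d}{p}-d+2<s<\frac{d}{p}$ is empty for $d=2$, so the content is only in $d=3$, where the assumption $p\in(3/2,3)$ forces $s<d/p<2$ automatically, and $s>d/p-d+2>0$.

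For the inclusion $H^{s,p}(\mathbb{R}^d)+\operatorname{Span}\{\mathbb{G}_\omega\}\subseteq H^{s,p}_\alpha(\mathbb{R}^d)$, I would use two ingredients. Lemma \ref{l.inclusion1} gives $H^{s,p}(\mathbb{R}^d)\subseteq H^{s,p}_\alpha(\mathbb{R}^d)$ whenever $sp<d$, which is exactly the upper bound $s<d/p$ we are assuming. Lemma \ref{l.GinH} gives $\mathbb{G}_\omega\in H^{s,p}_\alpha(\mathbb{R}^d)$ under the same condition $s\in(0,d/p)$. Together these two inclusions imply that any linear combination $g+c\,\mathbb{G}_\omega$ with $g\in H^{s,p}$ belongs to $H^{s,p}_\alpha$.

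For the reverse inclusion $H^{s,p}_\alpha(\mathbb{R}^d)\subseteq H^{s,p}(\mathbb{R}^d)+\operatorname{Span}\{\mathbb{G}_\omega\}$, I would take $\phi\in H^{s,p}_\alpha(\mathbb{R}^d)$ and use the definition \eqref{eq.defdp72} to write $\phi=(\omega-\Delta_\alpha)^{-s/2}f$ for some $f\in L^p(\mathbb{R}^d)$. Since our range satisfies $\frac{d}{p}-d+2<s<2$, Lemma \ref{l.decomp1} applies directly and produces the decomposition $\phi=g+C_s(f)\,\mathbb{G}_\omega$ with $g\in H^{s,p}(\mathbb{R}^d)$, concluding the inclusion.

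I do not anticipate any real obstacle, since the genuine analytic estimates (the pointwise/Marcinkiewicz bounds on $\Gamma_s$ and $C_s$, and the mapping property $\mathbb{G}_\omega\in H^{s,p}_\alpha$) have already been absorbed into Lemmas \ref{l.inclusion1}, \ref{l.GinH}, and \ref{l.decomp1}. The only minor point worth recording for consistency with Theorem \ref{t.23A1} is that the sum is in fact direct: by \eqref{eq.regularityG L2}, $\mathbb{G}_\omega\notin H^{\frac{d}{2}-d+2}(\mathbb{R}^d)$, and a standard Sobolev embedding argument shows $\mathbb{G}_\omega\notin H^{s,p}(\mathbb{R}^d)$ in the regime $s>\frac{d}{p}-d+2$, upgrading the "$+$" to "$\dot+$".
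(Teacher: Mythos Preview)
Your proposal is correct and follows exactly the same approach as the paper's own proof: combining Lemmas \ref{l.inclusion1} and \ref{l.GinH} for the inclusion $H^{s,p}+\operatorname{Span}\{\mathbb{G}_\omega\}\subseteq H^{s,p}_\alpha$, and invoking Lemma \ref{l.decomp1} for the reverse inclusion. Your additional remark on directness of the sum is a nice touch consistent with Theorem \ref{t.23A1}, though the paper does not include it in the proof of this lemma.
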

\begin{proof}
    Lemma \ref{l.GinH} says that $\mathbb{G}_\omega\in H^{s,p}_\alpha{(\mathbb{R}^d)}$, while Lemma \ref{l.inclusion1} gives the inclusion $H^{s,p}{(\mathbb{R}^d)}\subseteq H^{s,p}_\alpha{(\mathbb{R}^d)}$, proving 
    $$H^{s,p}{(\mathbb{R}^d)}+\text{Span}{\{\mathbb{G}_\omega}    \}\subseteq H^{s,p}_\alpha{(\mathbb{R}^d)}.$$
    The converse inclusion is a direct application of Lemma \ref{l.decomp1}.
\end{proof}
\subsection{The case $s>\frac{d}{p}$} We end this section with the final characterization, recovering the standard definition of $\mathcal{D}(-\Delta_\alpha)$.
\begin{Lemma}
    If ${\frac{d}{p}<s<2}$, then the following relation holds
    $$g(0)=(\alpha+c(\omega))C_s(f),$$
with $g$ and $C_s(f)$ as defined in Lemma \ref{l.decomp1} and $c(\omega)$ is defined in \eqref{eq.defc}.
In particular, for every $\phi\in H^{s,p}_{\alpha}$, there exists a unique $g\in H^{s,p}$ such that 
\begin{equation}
    \phi=g+\frac{g(0)}{\alpha+c(\omega)}\mathbb{G}_\omega.
\end{equation}
\end{Lemma}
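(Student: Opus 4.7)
The plan is to verify the identity $g(0)=(\alpha+c(\omega))C_s(f)$ by direct computation from the integral representations of $g=(\omega-\Delta)^{-s/2}f+\Gamma_s(f)$ and $C_s(f)$, exploiting the asymptotic expansion of $\mathbb{G}_\omega$ near the origin. The hypothesis $s>d/p$ enters in two ways: through the Sobolev embedding $H^{s,p}(\mathbb{R}^d)\hookrightarrow C^0(\mathbb{R}^d)$ it guarantees that $g(0)$ is a well-defined continuous functional on $H^{s,p}(\mathbb{R}^d)$; and it ensures absolute convergence at infinity of the $t$-integrals arising in the pointwise evaluation.

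First I would evaluate $(\omega-\Delta)^{-s/2}f$ at the origin by applying the Komatsu formula \eqref{eq.fractionalKomatsu negative} and the elementary identity $(\omega+t-\Delta)^{-1}f(0)=\langle f,\mathbb{G}_{\omega+t}\rangle$, which holds since $\mathbb{G}_{\omega+t}$ is real and radial. This yields
\begin{equation*}
(\omega-\Delta)^{-s/2}f(0)=\frac{\sin(s\pi/2)}{\pi}\int_0^\infty t^{-s/2}\langle f,\mathbb{G}_{\omega+t}\rangle\,dt,
\end{equation*}
with absolute convergence because H\"older combined with Proposition \ref{lemA3} gives $|\langle f,\mathbb{G}_{\omega+t}\rangle|\lesssim |\omega+t|^{d/(2p)-1}\|f\|_{L^p}$, while the restriction $d/p<s<2$ makes the integral finite at both ends. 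Second, I would evaluate $\Gamma_s(f)$ at the origin by using the Taylor expansion \eqref{eq.taylor0} of the Green function, which gives the key identity $(\mathbb{G}_{\omega+t}-\mathbb{G}_\omega)(0)=c(\omega)-c(\omega+t)$, producing an analogous integral with an extra factor $(c(\omega)-c(\omega+t))/(\alpha+c(\omega+t))$.

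Adding the two contributions and factoring out $\langle f,\mathbb{G}_{\omega+t}\rangle/(\alpha+c(\omega+t))$, the relevant bracket reduces algebraically to $(\alpha+c(\omega))/(\alpha+c(\omega+t))$, so that
\begin{equation*}
g(0)=(\alpha+c(\omega))\cdot\frac{\sin(s\pi/2)}{\pi}\int_0^\infty t^{-s/2}\frac{\langle f,\mathbb{G}_{\omega+t}\rangle}{\alpha+c(\omega+t)}\,dt=(\alpha+c(\omega))\,C_s(f).
\end{equation*}
Substituting $C_s(f)=g(0)/(\alpha+c(\omega))$ into the decomposition \eqref{cdec5m} yields the claimed formula $\phi=g+\tfrac{g(0)}{\alpha+c(\omega)}\mathbb{G}_\omega$. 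Uniqueness of the regular part $g\in H^{s,p}(\mathbb{R}^d)$ follows by the argument used in Proposition \ref{l.wun0}: two representations would force their difference to be a scalar multiple of $\mathbb{G}_\omega$ lying in $H^{s,p}(\mathbb{R}^d)$, which is impossible since $s>d/p\geq d/p-d+2$ implies $\mathbb{G}_\omega\notin H^{s,p}(\mathbb{R}^d)$ by the lemma characterising the regularity of $\mathbb{G}_1$.

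The main obstacle I expect is the rigorous justification of exchanging the pointwise evaluation at $x=0$ with the Bochner integrals defining $(\omega-\Delta)^{-s/2}f$ and $\Gamma_s(f)$. This can be handled by first verifying the identity for $f$ in the dense class $S_0(\mathbb{R}^d)$ of Schwartz functions vanishing at $0$ with all derivatives, where dominated convergence makes the swap transparent, and then extending by continuity using the uniform bounds already established in Propositions \ref{pr.C} and \ref{pr.Gamma} together with the continuity of point evaluation on $H^{s,p}(\mathbb{R}^d)$ for $s>d/p$.
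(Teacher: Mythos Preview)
Your proposal is correct and follows essentially the same route as the paper: evaluate $(\omega-\Delta)^{-s/2}f(0)$ via the Komatsu formula together with $(\omega+t-\Delta)^{-1}f(0)=\langle f,\mathbb{G}_{\omega+t}\rangle$, evaluate $\Gamma_s(f)(0)$ using $(\mathbb{G}_{\omega+t}-\mathbb{G}_\omega)(0)=c(\omega)-c(\omega+t)$, and observe the algebraic cancellation that leaves the factor $(\alpha+c(\omega))$ in front of $C_s(f)$. Your additional remarks on the convergence of the $t$-integrals under the hypothesis $d/p<s<2$ and on justifying the exchange of point evaluation with the Bochner integral are welcome refinements that the paper leaves implicit.
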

\begin{proof}
    First of all we note that we are in the case $sp>d$, so the point value is well defined in $H^{s,p}( \mathbb{R}^d)$.
We have 
$$g(0)=(\omega-\Delta)^{-\frac{s}{2}}f (0)+\Gamma_s(f) (0),$$
with $\Gamma_s$ defined in \eqref{eq.defGamma}.
The asymptotic expansion of $\mathbb{G}_\omega$ gives
\begin{equation}\label{eq.evaluation01}
\begin{aligned}
    \Gamma_s(f)(0)&=-\frac{\sin\left(\frac{s\pi}{2}\right)}{\pi }\int_0^\infty t^{-\frac{s}{2}}\frac{\langle f,\mathbb{G}_{\omega+t}\rangle}{\alpha+c(\omega+t)}(-c(\omega+t)+c(\omega))dt\\
    &=\frac{\sin\left(\frac{s\pi}{2}\right)}{\pi }\int_0^\infty t^{-\frac{s}{2}}{\langle f,\mathbb{G}_{\omega+t}\rangle} dt +(\alpha+c(\omega))C_s(f),
    \end{aligned}
\end{equation}
    with $C_s(f)$ defined in \eqref{eq.defC}.
To compute the other term, we use the fractional resolvent formula
\begin{equation}\label{eq.evaluation02}
    \begin{aligned}
        (\omega-\Delta)^{-\frac{s}{2}}f (0)&=-\frac{\sin\left(\frac{s\pi}{2}\right)}{\pi }\int_0^\infty t^{-\frac{s}{2}}(t+\omega-\Delta)^{-1}f (0)dt\\
        &=-\frac{\sin\left(\frac{s\pi}{2}\right)}{\pi }\int_0^\infty t^{-\frac{s}{2}}\mathbb{G}_{\omega+t}*f (0)dt=-\frac{\sin\left(\frac{s\pi}{2}\right)}{\pi }\int_0^\infty t^{-\frac{s}{2}}{\langle f,\mathbb{G}_{\omega+t}\rangle} dt,
    \end{aligned}
\end{equation}
    where the last equality follows from the radiality of $\mathbb{G}_\omega$.

Adding \eqref{eq.evaluation01} and \eqref{eq.evaluation02} we have the thesis.
\end{proof}

\section{Local well posedness}\label{s.lwp}
In this section we consider the Cauchy problem \eqref{eq.CP81}
and we shall give alternative proof of the local existence result established in Theorem B.1 in  \cite{FGI22}.
{In the following, we will use the notation 
$$ L^p(0,T)V\coloneqq L^p((0,T);V(\mathbb{R}^d)) $$
and
$$ L^pV\coloneqq L^p(\mathbb R_+;V(\mathbb{R}^d)) $$
for $p\in[1,\infty]$, $s\ge 0$ and $V(\mathbb{R}^d)=L^p(\mathbb{R}^d),H^{s,p}_\alpha(\mathbb R^d)$ or $ H^{2,p}(\mathbb R^d)$ }
Further we recall the Strichartz estimates for $\Delta_\alpha$ that are obtained in \cite{CMY19,CMY19b} and \cite{DMSY18}.

\begin{equation} \label{eq.str710}
    \begin{aligned}
& \left\| e^{i t \Delta_\alpha} f \right\|_{L^q(0,T)L^r} \lesssim \|f\|_{L^2}, \\
& \left\|\int_0^t e^{i(t-\tau)\Delta_\alpha} F(\tau) d\tau\right\|_{L^q(0,T)L^r} \lesssim  \left\| F\right\|_{L^{\tilde{q}^\prime}(0,T)L^
{\tilde{r}^\prime}} 
    \end{aligned}
\end{equation}
provided  with $(r,q)$ and $(\tilde{r},\tilde{q})$ admissible pairs.
 The couple $(r,q)$ is admissible if
\begin{equation}\label{eq.adm18}
    \frac{2}{q} + \frac{d}{r} = \frac{d}{2}, \mbox{ with } \left\{\begin{aligned}
        &r\in[2,\infty),\ q\in(2,\infty], &d=2,\\
        &r\in[2,3),\ q\in (4,\infty], &d=3.
    \end{aligned}\right.
\end{equation}

We show how to obtain local {well-posedness} of the problem \eqref{eq.CP81}. 

\begin{proof}[Proof of Theorem \ref{t.le19}]
 Consider the operator
 \begin{equation}\label{eq.kop37}
     \mathfrak{K} (u) = e^{it \Delta_\alpha} u_0 -i{\mu} \int_0^t e^{i(t-\tau)\Delta_\alpha}  u(\tau)|u(\tau)|^{p-1} d\tau
 \end{equation}
    and {consider the Banach space} $$L^\infty(0,T)L^2 \cap L^{\tilde{q}^\prime}(0,T) L^{\tilde{r}^\prime}.$$ 

{ If $1\leq p<\frac{3}{2}$, we apply the Strichartz estimate \eqref{eq.str710} with

\begin{equation}\label{tqr46}
  \tilde{r} =2 ,\ \tilde{q}= \infty ,
\end{equation}
 and we get
  $$ \left\| \mathfrak{K}(u)\right\|_{L^q(0,T)L^r} \lesssim \|u_0\|_{L^2} + \left\| u|u|^{p-1}\right\|_{L^{1}(0,T)L^{2}  }  \lesssim \|u_0\|_{L^2} + \left\|u\right\|^p_{L^{p}(0,T)L^{2p}  } . $$
  This estimate holds for every $(q,r)$  admissible couple satisfying \eqref{eq.adm18}, so we can choose
  \begin{equation}
      \begin{aligned}
          & r=2p, \\
          & q = \frac{4p}{3(p-1)}.
      \end{aligned}
  \end{equation}
 Now  we need
 $$ \left\|u\right\|_{L^{p}(0,T)L^{2p}  }  \lesssim T^{\gamma}  \left\|u\right\|_{L^{q}(0,T)L^{2p}  }  $$
 with $\gamma=\frac{4-3(p-1)}{4p}>0$ and this can be done if 
 $$ p < 1+\frac{4}{3}=\frac{7}{3}, $$
that is fulfilled.
The estimate
 \begin{equation}\label{eq.sest60}
     \left\| \mathfrak{K}(u)\right\|_{L^{\frac{4p}{3(p-1)}}(0,T)L^{2p}} \lesssim \|u_0\|_{L^2} + T^{p\gamma} \left\| u\right\|^p_{L^{\frac{4p}{3(p-1)}}(0,T)L^{2p}}
 \end{equation}
    shows that $\mathfrak{K} $ maps
  \begin{equation}\label{eq.2R62}
     \left\{ u \in L^{\frac{4p}{3(p-1)}}(0,T)L^{2p}; \left\| u\right\|_{L^{\frac{4p}{3(p-1)}}(0,T)L^{2p}} \leq 2R \right\}
  \end{equation}
into itself,  provided $u_0 \in B_{L^2}(R)$
and $T=T(R,p)$ is sufficiently small.
In a similar way we deduce
$$ \left\| \mathfrak{K}(u)-\mathfrak{K}(\tilde{u})\right\|_{L^{\frac{4p}{3(p-1)}}(0,T)L^{2p}} \leq \frac{1}{2} \left\| u-\tilde{u}\right\|_{L^{\frac{4p}{3(p-1)}}(0,T)L^{2p}}  $$
so $\mathfrak{K}$ is a contraction in \eqref{eq.2R62}. Since we use contraction method we have that the solution is unique and depends continuously from the intial data.

If $\frac{4}{3}<p<2$, we consider $\varepsilon>0$ sufficiently small and the admissible couple 
\begin{equation}\label{tqr}
  \tilde{r} =\frac{3}{1+\varepsilon} ,\ \tilde{q}=\frac{4}{1-2\varepsilon}  ,
\end{equation}
with conjugate exponents
\begin{equation}
    \begin{aligned}
       & \tilde{r}^\prime =\frac{3}{2-\varepsilon} ,&\tilde{q}^\prime=\frac{4}{3+2\varepsilon}  .
    \end{aligned}
\end{equation}
We apply Strichartz estimate \eqref{eq.str710}
$$ \left\| \mathfrak{K}(u)\right\|_{L^q(0,T)L^r} \lesssim \|u_0\|_{L^2} + \left\| u|u|^{p-1}\right\|_{L^{\tilde{q}^\prime}(0,T)L^{\tilde{r}^\prime}  }  \lesssim \|u_0\|_{L^2} + \left\|u\right\|^p_{L^{\frac{4p}{3+2\varepsilon}}(0,T)L^{\frac{3p}{2-\varepsilon}}  }  $$
  and  we can choose the admissible couple
  \begin{equation}
      \begin{aligned}
          & r=\tilde{r}^\prime p=\frac{3p}{2-\varepsilon}, 
          & q =\frac{4p}{3p-4+2\varepsilon}.
      \end{aligned}
  \end{equation}
  We note that $\tilde{q}^\prime p<q$, and also that
  \begin{equation}
      \frac{1}{\tilde{q}^\prime p}=\frac{1}{q}+\frac{1}{p}
  \end{equation}so we can apply the H\"older inequality to get
 $$\|u\|^p_{L^{\tilde{q}\prime p}(0,T)L^{\tilde{r}^\prime p}}\leq T\|u\|^p_{L^{{q}}(0,T)L^{\tilde{r}^\prime p}},$$
 that provides
 \begin{equation}\label{eq.sest60 2}
     \left\| \mathfrak{K}(u)\right\|_{L^{\frac{4p}{3p-4+2\varepsilon}}(0,T)L^\frac{3p}{2-\varepsilon}} \lesssim \|u_0\|_{L^2} +T \left\|u\right\|^p_{L^{\frac{4p}{3p-4+2\varepsilon}}(0,T)L^{\frac{3p}{2-\varepsilon}}  } .
 \end{equation}
 Similarly, we get
\begin{equation}
     \left\| \mathfrak{K}(u)-\mathfrak{K}(\tilde{u})\right\|_{L^{\frac{4p}{3p-4+2\varepsilon}}(0,T)L^\frac{3p}{2-\varepsilon}}\leq \frac{1}{2}\|u-\tilde{u}\|_{L^{\frac{4p}{3p-4+2\varepsilon}}(0,T)L^\frac{3p}{2-\varepsilon}},
\end{equation}
so $\mathfrak{K}$ is a contraction.

 }

\end{proof}

Our Theorem \ref{t.2.1} guarantees the more general Strichartz estimates
\begin{equation} \label{eq.SH123}
    \begin{aligned}
& \left\| e^{i t \Delta_\alpha} f \right\|_{L^q(0,T)H^{s,r}_\alpha} \lesssim \|f\|_{H^s_\alpha}, \\
& \left\|\int_0^t e^{i(t-\tau)\Delta_\alpha} F(\tau) d\tau\right\|_{L^q(0,T)H^{s,r}_\alpha} \lesssim  \left\| F\right\|_{L^{\tilde{q}^\prime}(0,T)H^{s,{\tilde{r}^\prime}}_\alpha },
    \end{aligned}
\end{equation}
with $(r,q)$ and $(\tilde{r},\tilde{q})$ admissible pairs, that satisfy condition \eqref{eq.adm18}. Thanks to this estimates, we can extend the local existence result in the energy space $H^1_\alpha$. { We start from the dimension $d=2$, resuming the proof in \cite{GR25}, and improving it for $1<p<2$.}

\begin{proof}[Proof of Theorem \ref{t.Schrodinger2}]
    Consider the operator
  $$  \mathfrak{K} (u) = e^{it \Delta_\alpha} u_0 -i{\mu} \int_0^t e^{i(t-\tau)\Delta_\alpha}  u(\tau)|u(\tau)|^{p-1} d\tau. $$

Further, we {consider}  the Banach space {$\mathcal{B}=L^\infty(0,T)H^1_\alpha $} and the corresponding ball of radius $R$
  $$ B_{\mathcal{B}} = \left\{u \in \mathcal{B} ; \|u\|_{\mathcal{B}} \leq R \right\}.$$

Applying the Strichartz estimate \eqref{eq.SH123}, we find
   $$ \left\| \mathfrak{K}(u)\right\|_{L^{\infty}(0,T)H^1_\alpha} \lesssim \|u_0\|_{H^1_\alpha{(\mathbb{R}^2)}} + \left\| u|u|^{p-1}\right\|_{L^{\tilde{q}^\prime}(0,T)H^{1, \tilde{r}^\prime}_\alpha  } .   $$

Now we choose

  \begin{equation}\label{eq.rq}
  \begin{aligned}
      &\tilde{r} = \frac{2-\varepsilon}{1-\varepsilon}, &\tilde{q}= \frac{2(2-\varepsilon)}{\varepsilon}
  \end{aligned}
\end{equation}
  so that
  \begin{equation}\label{eq.rqprime}
  \begin{aligned}
      & \tilde{r}^\prime = 2-\varepsilon, &\tilde{q}^\prime= \frac{4-2\varepsilon }{4-3\varepsilon} .
  \end{aligned}
\end{equation}
and $\varepsilon\in (0,1)$ is sufficiently small.
Since $\tilde{r}^\prime <2,$ we see that  {Theorem \ref{t.2.2}} implies
$$ \left\| u|u|^{p-1}\right\|_{H^{1,{\tilde{r}^\prime}}_\alpha {(\mathbb{R}^2)} } \sim \left\| u|u|^{p-1}\right\|_{H^{1,{\tilde{r}^\prime}}{(\mathbb{R}^2)}  } \sim
\left\| u|u|^{p-1}\right\|_{L^{\tilde{r}^\prime}{(\mathbb{R}^2)}} + \left\| \nabla u|u|^{p-1}\right\|_{L^{\tilde{r}^\prime}{(\mathbb{R}^2)} }$$
Now we use the fact that $u \in H^1_\alpha{(\mathbb{R}^2)}$
$$  u= g + c_*\mathbb G_\omega$$
and we can continue the estimates  as follows

$$ \left\| |\nabla u||u|^{p-1}\right\|_{L^{\tilde{r}^\prime}{(\mathbb{R}^2)}} \leq \left\| |\nabla g||u|^{p-1}\right\|_{L^{\tilde{r}^\prime}{(\mathbb{R}^2)}} +
|c_*|\left\| |\nabla \mathbb{G}_\omega||u|^{p-1}\right\|_{L^{\tilde{r}^\prime}{(\mathbb{R}^2)}}.$$
Then we estimate each of the terms in the right side and find
\begin{equation}\label{eq.es33}
    \left\| |\nabla g||u|^{p-1}\right\|_{L^{\tilde{r}^\prime}{(\mathbb{R}^2)} }\lesssim\|\nabla  g\|_{L^2{(\mathbb{R}^2)}}\|u\|^{p-1}_{L^{\frac{2(2-\varepsilon)}{\varepsilon}(p-1)}{(\mathbb{R}^2)}}\lesssim \|\nabla  g\|_{L^2{(\mathbb{R}^2)}}\|u\|^{p-1}_{H^1_\alpha{(\mathbb{R}^2)}},
\end{equation}

\begin{equation} \label{eq.es37}
    \left\| |\nabla \mathbb{G}_\omega||u|^{p-1}\right\|_{L^{\tilde{r}^\prime} }\lesssim\|\nabla \mathbb{G}_\omega\|_{L^{2-\frac{\varepsilon}{2}}}\|u\|_{L^{\frac{(2-\varepsilon)(4-\varepsilon)}{\varepsilon}(p-1)}}^{p-1}\lesssim \|u\|^{p-1}_{H^1_\alpha}.
\end{equation}

We used the Sobolev embedding \eqref{eq.Sobolev2}, valid for $q\in(2,\infty)$. Hence
\begin{equation}
 \left\| |\nabla u||u|^{p-1}\right\|_{L^{\tilde{r}^\prime}{(\mathbb{R}^2)} } \lesssim    \left( \|\nabla  g\|_{L^2{(\mathbb{R}^2)}}+ |c_*| \right) \|u\|^{p-1}_{H^1_\alpha{(\mathbb{R}^2)}}
\end{equation}
and via the definition of $H^1_\alpha$ norm, we get
\begin{equation}\label{mhes2}
 \left\| |\nabla u||u|^{p-1}\right\|_{L^{\tilde{r}^\prime}{(\mathbb{R}^2)} } \lesssim     \|u\|^{p}_{H^1_\alpha{(\mathbb{R}^2)}}.
\end{equation}

So

$$ \left\| \mathfrak{K}(u)\right\|_{L^\infty(0,T)H^1_\alpha} \lesssim \|u_0\|_{H^1_\alpha{(\mathbb{R}^2)}} +  T^{1/\tilde{q}'}\|u\|^{p}_{L^\infty(0,T) ,H^1_\alpha}$$

For $1<p \leq 2$ we can follow the Kato argument in \cite{K95} and in Section 4.4 of \cite{Cazenave}. { In this way, we deduce the uniqueness and continuous dependence of the solution on the initial data.}

For $p >2$ we can verify the inequality
\begin{equation}\label{eq.contraction difference}
    \left\| \mathfrak{K}(u)-\mathfrak{K}(v)\right\|_{L^{\infty}(0,T) H^1_\alpha} \lesssim \frac{1}{2} \left\| u-v\right\|_{L^\infty(0,T) H^1_\alpha}  
\end{equation}
so $\mathfrak{K}$ is a contraction in $L^\infty(0,T)H^1_\alpha. $ { This ensures the uniqueness of the solution and its continuous dependence on the initial data.}
 To verify \eqref{eq.contraction difference}, we call with $f(u)=|u|^{p-1}u$ the non linearity and we see that the Strichartz estimate \eqref{eq.SH123} provides
\begin{equation}\label{eq.ku-kv}
    \|\mathfrak{K}(u)-\mathfrak{K}(v)\|_{L^{\infty}(0,T) H^1_\alpha} \leq C\left\| f(u)-f(v)\right\|_{L^{\tilde{q}^\prime}(0,T) H^{1,\tilde{r}^\prime}_\alpha}
\end{equation}
for any $(\tilde{q},\tilde{r})$ admissible couple, with a constant $C$ that is independent from $u$ and $v$.
We consider again $\tilde{r}^\prime=2-\varepsilon$, so we have the equivalence 
$$H^{1,\tilde{r}^\prime}_\alpha{(\mathbb{R}^2)}=H^{1,\tilde{r}^\prime}{(\mathbb{R}^2)}$$
from {Theorem \ref{t.2.2}}.

The estimate \eqref{eq.contraction difference}  follows from
\begin{equation}\label{eq.con dif7}
    \left\| f(u)-f(v)\right\|_{ H^{1,\tilde{r}^\prime}_\alpha{(\mathbb{R}^2)}} \lesssim  \left\| u-v\right\|_{ H^1_\alpha{(\mathbb{R}^2)}}  \left( \left\| u\right\|^{p-1}_{ H^1_\alpha{(\mathbb{R}^2)}} + \left\| v\right\|^{p-1}_{ H^1_\alpha{(\mathbb{R}^2)}} \right) 
\end{equation}
that holds for every $u$ and $v$ in $H^1_\alpha{(\mathbb{R}^2)}$.

Because $p>2$, we have the inequalities
\begin{equation}\label{eq.u-v0}
    \begin{aligned}
        &|f(u)-f(v)|\lesssim |u-v|\big(|u|^{p-1}+|v|^{p-1}\big)
    \end{aligned}
\end{equation}
and 

\begin{equation}\label{eq.u-v80}
    \begin{aligned}
        |\nabla(f(u)-f(v))|&\lesssim|\nabla(u-v)|\big(|u|^{p-1}+|v|^{p-1}\big)  \\
      & + |(u-v)|\big(|\nabla u||u|^{p-2}+|\nabla v||v|^{p-2}\big) .
    \end{aligned}
\end{equation}

To estimate the term $|\nabla(u-v)|\big(|u|^{p-1}+|v|^{p-1}\big) $ in $L^{\tilde{r}^\prime}$ we use the argument of the proof of \eqref{mhes2}
and find
\begin{equation}\label{mhes90}
 \left\| |\nabla (u-v)|\big(|u|^{p-1}+|v|^{p-1}\big)\right\|_{L^{\tilde{r}^\prime}{(\mathbb{R}^2)} } \lesssim _p    \|u-v\|_{H^1_\alpha{(\mathbb{R}^2)}} \big( \|u\|^{p-1}_{H^1_\alpha{(\mathbb{R}^2)}} + \|v\|^{p-1}_{H^1_\alpha{(\mathbb{R}^2)}}\big).
\end{equation}

For 
$ |(u-v)|\big(|\nabla u||u|^{p-2}+|\nabla v||v|^{p-2}\big)$  we follow the same idea as done in \eqref{eq.es33} , decomposing $\nabla u=\nabla g+c_*\nabla\mathbb{G}_\omega$, with $g\in H^1{(\mathbb{R}^2)}$.
\begin{equation}\label{eq.es01}
\begin{aligned}
    &\left\|(u-v)||\nabla g||u|^{p-2}\right\|_{L^{\tilde{r}^\prime{(\mathbb{R}^2)}} }\lesssim\|\nabla  g\|_{L^2{(\mathbb{R}^2)}}\|u-v\|_{L^{\frac{4(2-\varepsilon)}{\varepsilon}}{(\mathbb{R}^2)}}\|u\|^{p-2}_{L^{\frac{4(2-\varepsilon)}{\varepsilon}(p-2)}{(\mathbb{R}^2)}} \\
    & \lesssim \|\nabla  g\|_{L^2} \|u-v\|_{H^1_\alpha{(\mathbb{R}^2)}} \|u\|^{p-2}_{H^1_\alpha{(\mathbb{R}^2)}}.
\end{aligned}
\end{equation}
Further,  \eqref{eq.es37} is 
modified as follows
\begin{equation}
    \begin{aligned}
     &   \left\||(u-v)| |\nabla \mathbb{G}_\omega|u|^{p-2}\right\|_{L^{\tilde{r}^\prime}{(\mathbb{R}^2)} }  \\
     &\lesssim    
 \|\nabla \mathbb{G}_\omega \|_{L^{2-\frac{\varepsilon}{2}}{(\mathbb{R}^2)}} \| |u-v| |u|^{p-2} \|_{L^{(2-\varepsilon)\frac{(4-\varepsilon)}{\varepsilon}}{(\mathbb{R}^2)}} \\
& \lesssim \| u-v \|_{L^{2(2-\varepsilon)\frac{(4-\varepsilon)}{\varepsilon}}{(\mathbb{R}^2)}} \| u \|^{p-2}_{L^{2(p-2)(2-\varepsilon)\frac{(4-\varepsilon)}{\varepsilon}}{(\mathbb{R}^2)}} \\
& \lesssim \|u-v\|_{H^1_\alpha{(\mathbb{R}^2)}} \|u\|^{p-2}_{H^1_\alpha{(\mathbb{R}^2)}} .
 \end{aligned}
\end{equation}

In a similar way, we can obtain also 
\begin{equation}\label{eq.u-v2}
    \left\| | (u-v)|\big(|u|^{p-1}+|v|^{p-1}\big)\right\|_{L^{\tilde{r}^\prime} {(\mathbb{R}^2)}} \lesssim \|u-v\|_{H^1_\alpha{(\mathbb{R}^2)}}(\| u\|^{p-1}_{ H^1_\alpha{(\mathbb{R}^2)}} + \left\| v\right\|^{p-1}_{ H^1_\alpha{(\mathbb{R}^2)}}),
\end{equation}
hence, we have \eqref{eq.con dif7}.

Considering $u$ and $v$ in $B_\mathcal{B}$, 
\begin{equation}\label{eq.fu-fv}
    \left\| f(u)-f(v)\right\|_{L^{\tilde{q}^\prime}(0,T) H^{1,\tilde{r}^\prime}_\alpha}\lesssim T^\frac{1}{\tilde{q}^\prime}\|u-v\|_{H^1_\alpha{(\mathbb{R}^2)}}
\end{equation}
follows from \eqref{eq.con dif7} and, choosing $T$ sufficiently small, \eqref{eq.ku-kv} and \eqref{eq.fu-fv} give \eqref{eq.contraction difference}.

\end{proof}

The same proof cannot be repeated for $d=3$ because in \eqref{eq.SH123}, i.e. 
\begin{equation}
    \left\|\int_0^t e^{i(t-\tau)\Delta_\alpha} u|u|^{p-1} d\tau\right\|_{L^q(0,T)H^{s,r}_\alpha} \lesssim  \left\| |u|^p \right\|_{L^{\tilde{q}^\prime}(0,T)H^{s,{\tilde{r}^\prime}}_\alpha  },
\end{equation}
 we have the condition $2\leq\tilde{r}<3$, that implies $\frac{3}{2}<\tilde{r}^\prime\leq2$, so we are out the range $\left(0,\frac{3}{2}\right)$, where we have the equality
 \begin{equation}
     H^{1,\tilde{r}^\prime}{(\mathbb{R}^3)}= H^{1,\tilde{r}^\prime}_\alpha{(\mathbb{R}^3)}
 \end{equation}
given in Theorem \ref{t.2.2}.

So we consider $s<1$ and prove a result similar to the local existence result obtained in \cite{K95}, but first we need some preliminary results.

We first recall the following estimate obtained in Lemma A.2 in \cite{K95} (see Chapter 2.5 in \cite{T00} and \cite{S95}).
\begin{Lemma} Assume  $p \in (1,2),$ $s \in (0,1)$ and  $1 < \ell, \ell_1< \infty,$ $1 < \ell_2  \leq \infty$ satisfy
\begin{equation}
\begin{aligned}
   &  \frac{1}{\ell} = \frac{1}{\ell_1} + \frac{p-1}{\ell_2}.
\end{aligned}
\end{equation}
Then we have
    \begin{equation}\label{eq.sk8}
        \begin{aligned}
            \||\phi|^{p-1}\phi\|_{H^{s,\ell}{(\mathbb{R}^3)}}\leq C \|\phi\|_{H^{s,\ell_1}{(\mathbb{R}^3)}}\|\phi\|^{p-1}_{L^{\ell_2}{(\mathbb{R}^3)}}
        \end{aligned}
    \end{equation}
    for all $\phi$ such that all norms in the right hand side are finite.
\end{Lemma}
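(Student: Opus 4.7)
The inequality is a nonlinear Moser/Kato--Ponce type estimate for the non-smooth nonlinearity $F(z)=|z|^{p-1}z$ with $p\in(1,2)$. Since in this range $F$ is only of class $C^{0,p-1}$ (its derivative is Hölder of order $p-1$, not Lipschitz), the classical chain rule is not available, so a fractional chain rule argument is needed. The plan is to reduce the fractional regularity estimate to the pointwise inequality
\begin{equation}
    |F(a)-F(b)|\;\lesssim\;\bigl(|a|^{p-1}+|b|^{p-1}\bigr)|a-b|,\qquad a,b\in\mathbb{C},
\end{equation}
valid for $p\geq 1$, and to combine it with a difference characterization of the Bessel potential space $H^{s,\ell}(\mathbb{R}^3)$.

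More precisely, I would first use the equivalence of norms (Strichartz 1967 / Chapter~2.5 of \cite{T00}) that for $s\in(0,1)$ and $\ell\in(1,\infty)$
\begin{equation}
    \|u\|_{H^{s,\ell}(\mathbb{R}^3)}\;\sim\;\|u\|_{L^\ell(\mathbb{R}^3)}+\|\mathcal{D}_s u\|_{L^\ell(\mathbb{R}^3)},
\end{equation}
where $\mathcal{D}_s$ is a difference square function such as
\begin{equation}
    \mathcal{D}_s u(x)\;=\;\left(\int_{\mathbb{R}^3}\frac{|u(x+h)-u(x)|^2}{|h|^{3+2s}}\,dh\right)^{\!1/2}.
\end{equation}
Applied to $F(\phi)$ together with the pointwise bound above, this gives
\begin{equation}
    \mathcal{D}_s\bigl(F(\phi)\bigr)(x)\;\lesssim\;\bigl(|\phi(x)|^{p-1}+M(|\phi|^{p-1})(x)\bigr)\,\mathcal{D}_s\phi(x),
\end{equation}
where $M$ is the Hardy--Littlewood maximal operator, obtained by Cauchy--Schwarz in $h$ to pull the factor $(|\phi(x+h)|+|\phi(x)|)^{p-1}$ out via a standard majorization. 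The $L^\ell$ estimate on the nonlinear piece then follows by Hölder with exponents $\ell_1$ and $\ell_2/(p-1)$, using the relation $\frac{1}{\ell}=\frac{1}{\ell_1}+\frac{p-1}{\ell_2}$, together with the $L^q$-boundedness of $M$ for $1<q\leq\infty$ (which requires $\ell_2/(p-1)>1$, automatic from $p<2\le\ell_2$). The zero-order piece $\|F(\phi)\|_{L^\ell}=\|\phi\|_{L^{p\ell}}^p$ is handled directly by Hölder from the same exponent relation.

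The main obstacle is the justification of the pointwise control of $\mathcal{D}_s(F(\phi))$ by $\mathcal{D}_s\phi$ times a maximal-function factor: one must handle the factor $(|\phi(x+h)|+|\phi(x)|)^{p-1}$ uniformly in $h$, and this is where the regularity $s<1$ and the restriction $p-1<1$ are both essential, since for $p-1\geq 1$ one would need a genuine derivative of $\phi$ (the Leibniz rule regime), and for $s\geq 1$ the difference characterization above breaks down and one must pass to Littlewood--Paley paraproducts. Because these technicalities are standard and fully carried out in \cite{K95,T00,S95}, the proof in the present paper will be a one-line appeal to those references.
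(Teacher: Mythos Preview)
Your prediction is exactly right: the paper does not prove this lemma at all; it is stated with the preamble ``We first recall the following estimate obtained in Lemma A.2 in \cite{K95} (see Chapter 2.5 in \cite{T00} and \cite{S95})'' and no proof is given. Your heuristic sketch of the underlying argument is also broadly correct in spirit, though the pointwise bound $\mathcal{D}_s(F(\phi))(x)\lesssim\bigl(|\phi(x)|^{p-1}+M(|\phi|^{p-1})(x)\bigr)\mathcal{D}_s\phi(x)$ does not follow from a naive Cauchy--Schwarz in $h$ (the cross term $|\phi(x+h)|^{p-1}|\phi(x+h)-\phi(x)|$ does not factor that way); the cited references handle this via a dyadic or Littlewood--Paley localization, which is precisely the ``standard majorization'' you allude to. One small slip: you write ``automatic from $p<2\le\ell_2$'', but the hypothesis is only $\ell_2>1$; the conclusion $\ell_2/(p-1)>1$ still holds since $p-1<1<\ell_2$.
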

This fractional chain rule can be generalized to the case where a singularity is present, adding some more {conditions}.
\begin{Proposition}\label{pcomp8}
    If  $p \in (1,2),$ $s \in (0,1)$ and  $3/2 < \ell \leq 2$, $\ell_1 \in [2,3),$  $ \ell_2 \in (3, \infty]$ satisfy
\begin{equation}\label{eq.el120}
\begin{aligned}
     & \frac{1}{\ell} + (p-1) \left(\frac{1}{\ell_1} - \frac{1}{\ell_2} \right) > \frac{(1+s)p}{3}  ,\\
   & \frac{1}{\ell} - \left(\frac{1}{\ell_1} - \frac{1}{\ell_2} \right) >  \frac{p}{3} ,\\
   &  \frac{1}{\ell_1} \geq \frac{s}3 .
\end{aligned}
\end{equation}
then for any smooth compactly supported functions $\varphi_1,\varphi_2$ we have
    \begin{equation}
        \begin{aligned}
            \left\|\left| \varphi_1 g+ \frac{c_0\varphi_2}{|x|}\right|^{p-1} \left(\varphi_1 g+ \frac{c_0\varphi_2}{|x|}\right)\right\|_{H^{s,\ell}(\mathbb{R}^3)}\leq C{(\varphi_1,\varphi_2)} \left(\|g\|_{H^{s,\ell_1}(\mathbb{R}^3)}+|c_0| \right) \left( \|g\|^{p-1}_{L^{\ell_2}(\mathbb{R}^3)} + |c_0| ^{p-1}\right)
        \end{aligned}
    \end{equation}
    for all $g \in H^{s,\ell_1}(\mathbb{R}^3) \cap L^{\ell_2}(\mathbb{R}^3) $ { and for any $c_0\in\mathbb{C}$.} 
\end{Proposition}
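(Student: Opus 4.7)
The strategy is to split $u := \varphi_1 g + c_0 \varphi_2/|x|$ into its regular part $a := \varphi_1 g$ and its singular part $b := c_0 \varphi_2/|x|$, and to control $F(u) := |u|^{p-1}u$ in $H^{s,\ell}(\mathbb{R}^3)$ by separating three contributions:
$$ F(u) = F(a) + F(b) + R(a,b), \qquad R(a,b) := F(u) - F(a) - F(b). $$
Since $p\in(1,2)$, the function $F$ is Hölder continuous of order $p$, and one has the pointwise bounds
$$ |F(u)-F(a)| \lesssim |b|\bigl(|a|+|b|\bigr)^{p-1}, \qquad |R(a,b)| \lesssim \min(|a|,|b|)\,\bigl(|a|+|b|\bigr)^{p-1}, $$
together with the corresponding first-difference bounds $|F(u)(x)-F(u)(y)|\lesssim (|u(x)-u(y)|)(|u(x)|+|u(y)|)^{p-1}$ needed for the Aronszajn--Slobodecki\u{\i} seminorm.

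For the regular contribution $F(a)$, the fractional chain rule \eqref{eq.sk8} is applied with the Hölder pairing $1/r = 1/\ell_1 + (p-1)/\ell_2$, giving $\|F(\varphi_1 g)\|_{H^{s,r}}\lesssim_{\varphi_1}\|g\|_{H^{s,\ell_1}}\|g\|_{L^{\ell_2}}^{p-1}$ after absorbing the cutoff $\varphi_1$ as an $H^{s,\ell_1}$-multiplier; the assumption $1/\ell_1\geq s/3$ provides the Sobolev embedding $H^{s,\ell_1}(\mathbb{R}^3)\hookrightarrow L^{3\ell_1/(3-s\ell_1)}(\mathbb{R}^3)$ that justifies this multiplier estimate. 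Since $F(\varphi_1 g)$ is compactly supported, one can then pass from $H^{s,r}$ to $H^{s,\ell}$ locally whenever $\ell\leq r$, which is the content of the first two inequalities in \eqref{eq.el120} with an appropriate choice of auxiliary exponents.

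For the purely singular contribution, one computes explicitly
$$ F(b) \;=\; |c_0|^{p-1}c_0\,\frac{|\varphi_2|^{p-1}\varphi_2}{|x|^p} \;=\; |c_0|^{p-1}c_0\,\frac{\psi(x)}{|x|^p}, $$
with $\psi := |\varphi_2|^{p-1}\varphi_2$ compactly supported and at least Hölder-$p$ continuous, hence a multiplier on $H^{s,\ell}$ (using $s<1<p$). One is then reduced to estimating $\|\chi/|x|^p\|_{H^{s,\ell}}$ for a smooth cutoff $\chi$. Using the explicit pointwise structure of $(-\Delta)^{s/2}$ applied to a radial singular power (in the spirit of Lemma \ref{l.dsgr06}, giving $(-\Delta)^{s/2}|x|^{-p}\sim|x|^{-p-s}$ near the origin and Schwartz-like decay at infinity), the $L^\ell$ integrability near $0$ of $F(b)$ requires $p\ell<3$, while that of $(-\Delta)^{s/2}F(b)$ requires $(p+s)\ell<3$: these are exactly the scaling contents of inequalities (2) and (1) of \eqref{eq.el120} respectively (with (1) being the constraint one recovers in the limit $\ell_1\to\ell_2$).

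For the mixed remainder $R(a,b)$, I would apply a fractional Leibniz/Kato--Ponce decomposition, distributing the $s$-derivatives between a \emph{regular} factor (estimated in $H^{s,\ell_1}$, or in its Sobolev embedding $L^{3\ell_1/(3-s\ell_1)}$) and a \emph{singular} factor (either $|b|^{p-1}$ estimated in $L^{\ell_2}$, or $(-\Delta)^{s/2}$ of it estimated via Riesz potentials on $|x|^{-p-s}$), and then close by H\"older with exponents dictated by all three inequalities in \eqref{eq.el120}. The main obstacle, and the reason for the somewhat intricate hypothesis \eqref{eq.el120}, is that $|x|^{-1}$ itself is \emph{not} in $H^{s,\ell_1}(\mathbb{R}^3)$ for the admissible range $\ell_1\in[2,3)$, so the chain rule \eqref{eq.sk8} is not directly applicable to $u$; the singular and cross contributions must be handled by the explicit Riesz-potential computations above, and the three index conditions are precisely those that align the regular, singular, and mixed scalings near the origin so that the H\"older products close at the target exponent $\ell$.
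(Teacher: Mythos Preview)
Your decomposition $F(u)=F(a)+F(b)+R(a,b)$ is a natural first attempt, and the pieces $F(a)$ and $F(b)$ can indeed be handled roughly as you indicate (though your derivation of $(p+s)\ell<3$ from ``the limit $\ell_1\to\ell_2$'' in condition~(1) is not correct; one actually needs a convex combination of (1) and (2) with weight $1/p$). The genuine gap is the cross term $R(a,b)$. Two concrete problems: first, $R(a,b)$ is \emph{not} a product, so ``fractional Leibniz/Kato--Ponce'' does not apply to it --- the pointwise bound $|R|\lesssim |a|\,|b|^{p-1}$ is only a majorization, and to control $\|R\|_{H^{s,\ell}}$ you would need a product or difference structure that is simply not there; second, your suggested intermediate step of placing $|b|^{p-1}$ in $L^{\ell_2}$ fails outright for large $\ell_2$, since $|b|^{p-1}\sim|x|^{-(p-1)}$ near $0$ belongs to $L^{\ell_2}$ only when $(p-1)\ell_2<3$, while the hypothesis allows $\ell_2=\infty$. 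A smaller issue: the Aronszajn--Slobodecki\u{\i} seminorm characterizes $W^{s,\ell}$, not the Bessel potential space $H^{s,\ell}$ used here, and these differ for $\ell\neq 2$.

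The paper avoids the cross term altogether by a desingularization trick: for a parameter $\theta\in[0,1)$ one writes
\[
F\!\left(\varphi_1 g+\frac{c_0\varphi_2}{|x|}\right)=|x|^{-p\theta}\varphi_3\,F\!\left(\widetilde\varphi_1 g+\frac{c_0\varphi_2}{|x|^{1-\theta}}\right),\qquad \widetilde\varphi_1:=\varphi_1\,|x|^{\theta},
\]
proves a weighted bound $\||x|^{-p\theta}\varphi_3\,G\|_{H^{s,\ell}}\lesssim\|G\|_{H^{s,m}}$ by applying fractional Leibniz to the genuine product with the explicit weight, and then applies the chain rule \eqref{eq.sk8} \emph{directly} to $\phi=\widetilde\varphi_1 g+c_0\varphi_2|x|^{-(1-\theta)}$, which now sits in $H^{s,\ell_1}\cap L^{\ell_2}$ because the singularity has been softened to order $1-\theta$. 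The three inequalities in \eqref{eq.el120} are exactly the solvability conditions for such a $\theta$. Since $u$ is never split, no remainder term ever appears.
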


\begin{proof}
    We use the relation
    \begin{equation}\label{eq.hom0}
        \left| \varphi_1 g+ \frac{c_0\varphi_2}{|x|}\right|^{p-1} \left(\varphi_1 g+ \frac{c_0\varphi_2}{|x|}\right) = |x|^{-p\theta} \varphi_3
        \left| \widetilde{\varphi}_1 g+ \frac{c_0\varphi_2}{|x|^{1-\theta}}\right|^{p-1} \left(\widetilde{\varphi}_1 g+ \frac{c_0\varphi_2}{|x|^{1-\theta}}\right)
    \end{equation}
    where $\widetilde{\varphi}_1(x) = \varphi_1(x) |x|^{\theta}$ { and $\varphi_3$ is a smooth compactly supported function such that $\varphi_3(x)=1$ for every $x\in\operatorname{supp}\{\varphi_1\}\cup\operatorname{supp}\{\varphi_2\}.$}
    Next step is to check the estimate
    \begin{equation}\label{eq.hom1}
        \||x|^{-p\theta} \varphi_3 F\|_{H^{s,\ell}(\mathbb{R}^3)} \lesssim \| F\|_{H^{s,m}(\mathbb{R}^3)}
    \end{equation}
    provided
    \begin{equation}\label{eq.hom5}
        \frac{1}{m} < \frac{1}{\ell} - \frac{p\theta}{3}.
    \end{equation}
    Indeed using the fractional Leibniz rule, we can write
    \begin{equation}
    \begin{aligned}
        & \||x|^{-p\theta} \varphi_3 F\|_{H^{s,\ell}(\mathbb{R}^3)}  \lesssim 
          \||x|^{-p\theta} \varphi_3\|_{H^{s,\ell_3}(\mathbb{R}^3)}  \| F\|_{L^{\ell_4}(\mathbb{R}^3)}  \\
          & + \||x|^{-p\theta} \varphi_3\|_{L^{\ell_5}(\mathbb{R}^3)} \|F\|_{H^{s,m}(\mathbb{R}^3)}
    \end{aligned}
    \end{equation}
    with
    \begin{equation}
        \frac{1}{\ell} =  \frac{1}{\ell_3} + \frac{1}{\ell_4} = \frac{1}{\ell_5} + \frac{1}{m}
    \end{equation}
The boundedness of $$\||x|^{-p\theta} \varphi_3\|_{H^{s,\ell_3}(\mathbb{R}^3)} + \||x|^{-p\theta} \varphi_3\|_{L^{\ell_5}(\mathbb{R}^3)} $$
is guaranteed if
\begin{equation}
    p\theta + s < \frac{3}{\ell_3} , \ \ p\theta < \frac{3}{\ell_5}
\end{equation}

So we get
 \begin{equation}
    \begin{aligned}
        & \||x|^{-p\theta} \varphi_3 F\|_{H^{s,\ell}(\mathbb{R}^3)} \lesssim   \| F\|_{L^{\ell_4}(\mathbb{R}^3)} + \|F\|_{H^{s,m}(\mathbb{R}^3)}
    \end{aligned}
    \end{equation}
with
\begin{equation}\label{eq.ho3}
    \frac{1}{\ell_4} < \frac{1}{\ell} - \frac{p\theta + s}{3}, \ \  \frac{1}{m} < \frac{1}{\ell} - \frac{p\theta }{3}
\end{equation}
The Sobolev embedding leads to
\begin{equation}
    \begin{aligned}
        & \||x|^{-p\theta} \varphi_3 F\|_{H^{s,\ell}(\mathbb{R}^3)} \lesssim    \|F\|_{H^{s,m}(\mathbb{R}^3)}
    \end{aligned}
    \end{equation}
    provided
    \begin{equation}
        \frac{1}{m} - \frac{1}{\ell_4} = \frac{s}3
    \end{equation}
    so turning back to \eqref{eq.ho3}, we see that \eqref{eq.hom5} implies \eqref{eq.hom1}.

Now we can use the representation \eqref{eq.hom0} and quote \eqref{eq.hom1} in order to obtain
\begin{equation}
        \begin{aligned}
            \left\|\left| \varphi_1 g+ \frac{c_0\varphi_2}{|x|}\right|^{p-1} \left(\varphi_1 g+ \frac{c_0\varphi_2}{|x|}\right)\right\|_{H^{s,\ell}(\mathbb{R}^3)} \lesssim
            \left\|\left| \tilde{\varphi}_1 g+ \frac{c_0\varphi_2}{|x|^{1-\theta}}\right|^{p-1} \left(\tilde{\varphi}_1 g+ \frac{c_0\varphi_2}{|x|^{1-\theta}}\right)\right\|_{H^{s,m}(\mathbb{R}^3)}
                    \end{aligned}
        \end{equation}
Now we are in position to apply the {Staffilani-Kato} estimate \eqref{eq.sk8} with
$$ \phi = \tilde{\varphi}_1 g+ \frac{c_0\varphi_2}{|x|^{1-\theta}} $$
and obtain
\begin{equation}\label{eq.sk6}
        \begin{aligned}
            \||\phi|^{p-1}\phi\|_{H^{s,m}(\mathbb{R}^3)}\leq C \|\phi\|_{H^{s,\ell_1}(\mathbb{R}^3)}\|\phi\|^{p-1}_{L^{\ell_2}{(\mathbb{R}^3)}}
        \end{aligned}
    \end{equation}
with
\begin{equation}
    \frac{1}{m} = \frac{1}{\ell_1} + \frac{p-1}{\ell_2}.
\end{equation}
Note that 
\begin{equation}
     \|\phi\|_{H^{s,\ell_1}(\mathbb{R}^3)} \lesssim  \|\tilde{\varphi}_1 g\|_{H^{s,\ell_1}(\mathbb{R}^3)} +  |c_0|\||x|^{-1+\theta}\varphi_2\|_{H^{s,\ell_1}(\mathbb{R}^3)}
     \lesssim \| g\|_{H^{s,\ell_1}(\mathbb{R}^3)}  + |c_0|
\end{equation}
provided
\begin{equation}\label{ell1}
    (1-\theta)+s < \frac{3}{\ell_1}.
\end{equation}

Similarly,
\begin{equation}
     \|\phi\|_{L^{\ell_2}{(\mathbb{R}^3)}} \lesssim  \|\tilde{\varphi}_1 g\|_{L^{\ell_2}{(\mathbb{R}^3)}} +  |c_0|\||x|^{-1+\theta}\varphi_2\|_{L^{\ell_2}{(\mathbb{R}^3)}}
     \lesssim \| g\|_{H^{s,\ell_1}(\mathbb{R}^3)}  + |c_0|
\end{equation}
provided
\begin{equation} \label{ell2}
    (1-\theta) < \frac{3}{\ell_2}.
\end{equation}

The system inequalities 
\eqref{eq.hom5}, \eqref{ell1} and \eqref{ell2}  has the form
\begin{equation}
    \begin{aligned}
       &  \frac{1}{\ell_1} + \frac{p-1}{\ell_2} < \frac{1}{\ell} - \frac{p\theta}{3}, \\
       &   (1-\theta)+s < \frac{3}{\ell_1} \\
       & (1-\theta) < \frac{3}{\ell_2}\\
       & 0 \leq \theta < 1,
    \end{aligned}
\end{equation}
    so solving the system with respect to $\theta \in (0,1)$ we find
    \begin{equation}
    \begin{aligned}
       &  \frac{p\theta}{3} < \frac{1}{\ell} - \frac{1}{\ell_1} - \frac{p-1}{\ell_2}\\
       &   \theta > 1+s - \frac{3}{\ell_1}, \theta > 1- \frac{3}{\ell_2}, \\
       & 0 \leq \theta \leq 1,       
    \end{aligned}
\end{equation}
and the existence of $\theta$ is guaranteed if
\begin{equation}
\begin{aligned}
   &\frac{(1+s)p}{3} <  \frac{1}{\ell} + (p-1) \left(\frac{1}{\ell_1} - \frac{1}{\ell_2} \right) \\
   & \frac{p}{3} < \frac{1}{\ell} - \left(\frac{1}{\ell_1} - \frac{1}{\ell_2} \right)\\
   & s - \frac{3}{\ell_1} \leq  0, \ \frac{1}{\ell} - \frac{1}{\ell_1} - \frac{p-1}{\ell_2} \geq 0.
\end{aligned} 
\end{equation}
Note that first two requirements
\begin{equation}
\begin{aligned}
   &\frac{(1+s)p}{3} <  \frac{1}{\ell} + (p-1) \left(\frac{1}{\ell_1} - \frac{1}{\ell_2} \right) \\
   & \frac{p}{3} < \frac{1}{\ell} - \left(\frac{1}{\ell_1} - \frac{1}{\ell_2} \right)
\end{aligned} 
\end{equation}
imply
\begin{equation}
  \frac{1}{\ell} - \frac{1}{\ell_1} - \frac{p-1}{\ell_2}  =  \frac{1}{\ell} - \frac{1}{\ell_1} + \frac{1}{\ell_2}  - \frac{p}{\ell_2} > \frac{p}3 - \frac{p}{\ell_2} \geq 0 
\end{equation}
provided $\ell_2 \geq 3.$
\end{proof}

\begin{Corollary}\label{c.K1}
      If $s \in (0,1),$ $p \in (1,2)$ and $\ell \in (3/2,2]$ satisfy
      \begin{equation}\label{eq.ps3}
          p+s < \frac{3}{\ell},
      \end{equation}
      then for  any compactly supported functions $\varphi_1, \varphi_2$ there exists $C>0$ so that  for any $g \in H^{s,\ell}(\mathbb{R}^3)$ and any complex number $c_0$ we have 
 \begin{equation}\label{eq.esar77}
        \begin{aligned}
            \left\|\left| \varphi_1 g+ \frac{c_0\varphi_2}{|x|}\right|^{p-1} \left(\varphi_1 g+ \frac{c_0\varphi_2}{|x|}\right)\right\|_{H^{s,\ell}(\mathbb{R}^3)}\leq C \left(\|g\|_{H^{s,\ell_1}(\mathbb{R}^3)}+|c_0| \right)^p .
        \end{aligned}
    \end{equation}  
    for any $\ell_1 \in [2,3).$
\end{Corollary}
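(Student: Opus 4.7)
The plan is to reduce Corollary \ref{c.K1} to Proposition \ref{pcomp8} by making a canonical choice of the auxiliary exponent $\ell_2$ and then absorbing the $L^{\ell_2}$ norm into $\|g\|_{H^{s,\ell_1}}$ via Sobolev embedding. Concretely, given $\ell_1 \in [2,3)$, I would set $\ell_2$ so that
\begin{equation}
\frac{1}{\ell_2} = \frac{1}{\ell_1} - \frac{s}{3},
\end{equation}
which is the critical Sobolev exponent. With this choice the term $1/\ell_1 - 1/\ell_2$ in \eqref{eq.el120} equals $s/3$, so the first two hypotheses of Proposition \ref{pcomp8} both collapse to
\begin{equation}
\frac{1}{\ell} > \frac{s+p}{3},
\end{equation}
which is exactly the assumption $p+s < 3/\ell$ of the corollary. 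The third condition $1/\ell_1 \geq s/3$ is automatic since $\ell_1<3$ and $s<1$.

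Once the conditions of the proposition are verified, I would apply it to obtain
\begin{equation}
\Bigl\| \bigl| \varphi_1 g + \tfrac{c_0\varphi_2}{|x|}\bigr|^{p-1} \bigl(\varphi_1 g + \tfrac{c_0\varphi_2}{|x|}\bigr)\Bigr\|_{H^{s,\ell}(\mathbb{R}^3)} \lesssim \bigl(\|g\|_{H^{s,\ell_1}} + |c_0|\bigr)\bigl(\|g\|_{L^{\ell_2}}^{p-1} + |c_0|^{p-1}\bigr).
\end{equation}
The standard Sobolev embedding $H^{s,\ell_1}(\mathbb{R}^3) \hookrightarrow L^{\ell_2}(\mathbb{R}^3)$ (with the $\ell_2$ chosen above) then gives $\|g\|_{L^{\ell_2}} \lesssim \|g\|_{H^{s,\ell_1}}$. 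The elementary estimate $(A+B)(A^{p-1}+B^{p-1}) \lesssim (A+B)^p$ for $A,B \geq 0$ finally collapses the right-hand side into the single $p$-th power $(\|g\|_{H^{s,\ell_1}} + |c_0|)^p$ claimed in \eqref{eq.esar77}.

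The main technical point to watch is that the Sobolev embedding one invokes must land in $L^{\ell_2}$ with $\ell_2 > 3$ as required by Proposition \ref{pcomp8}. Since $1/\ell_2 < 1/3$ translates into $\ell_1 > 3/(1+s)$, the chosen critical exponent exits $(3,\infty]$ exactly when $(s,\ell_1)$ lies in this admissible range; if $\ell_1$ is too close to $2$ for a very small $s$, one should instead choose $\ell_2$ strictly between the critical value and $\infty$ and use the sub-critical embedding, paying for it by strict inequality in conditions (1)–(2), which is affordable because $p+s<3/\ell$ is a strict inequality. Aside from this bookkeeping, the argument is essentially algebraic: check compatibility of the exponents and chain together Proposition \ref{pcomp8} with Sobolev embedding.
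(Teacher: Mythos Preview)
Your approach is essentially the paper's: pick $\ell_2$ via the critical Sobolev relation $1/\ell_2 = 1/\ell_1 - s/3$, observe that with this choice the first two conditions of Proposition~\ref{pcomp8} both collapse to $p+s<3/\ell$ and the third is automatic, then apply the proposition together with $H^{s,\ell_1}\hookrightarrow L^{\ell_2}$. The only structural difference is that the paper first carries this out for $\ell_1=2$ and then lifts to general $\ell_1\in[2,3)$ by noting that multiplication by a smooth cutoff is bounded from $H^{s,\ell_1}(\mathbb{R}^3)$ into $H^{s,2}(\mathbb{R}^3)$ when $\ell_1\ge 2$; you instead run the argument directly for each $\ell_1$. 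Both routes are equally short.

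One caution about your last paragraph: the patch you propose for the constraint $\ell_2>3$ is garbled. If the critical exponent $\ell_2^*=3\ell_1/(3-s\ell_1)$ is $\le 3$, then taking $\ell_2$ ``between the critical value and $\infty$'' means $\ell_2>\ell_2^*$, and the embedding $H^{s,\ell_1}\hookrightarrow L^{\ell_2}$ is then \emph{super}-critical and fails---it is not a ``sub-critical embedding'' one can pay for with slack in the strict inequality. Moreover, enlarging $\ell_2$ makes condition~(2) of Proposition~\ref{pcomp8} harder, not easier, so the slack argument does not go through as written. The paper's own proof does not dwell on this point either; if you want to close it cleanly, the right move is to note that the $\ell_2>3$ hypothesis in Proposition~\ref{pcomp8} is only used there to deduce $1/\ell>1/\ell_1+(p-1)/\ell_2$, and to verify that last inequality directly for the relevant range of $(\ell,\ell_1,\ell_2)$ rather than to force $\ell_2>3$.
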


\begin{proof}
    We have to find $\ell_1\in [2,3)$, $\ell_2 \geq 3$ so that \eqref{eq.el120} is valid and then to apply 
Proposition \ref{pcomp8} in combination with Sobolev embedding $H^{s,\ell_1}{(\mathbb{R}^3)} \subset L^{\ell_2}{(\mathbb{R}^3)}$ that requires
$$ \frac{1}{\ell_1} - \frac{1}{\ell_2} = \frac{s}{3}.$$

 We shall take at the beginning $\ell_1=2$. Therefore  $$\frac{1}{\ell_2} = \frac{1}{2} - \frac{s}3$$
 and we have to verify
\begin{equation}\label{eq.el128}
\begin{aligned}
     & \frac{1}{\ell} + (p-1)\frac{s}{3}  > \frac{(1+s)p}{3}  ,\\
   & \frac{1}{\ell} - \frac{s}{3}  >  \frac{p}{3} ,\\
   &  \frac{1}{2} \geq \frac{s}3 .
\end{aligned}
\end{equation}
This system can be reduced to 
\eqref{eq.ps3}. Finally, to cover the case $\ell_1\in [2,3)$ we use the fact that multiplication by smooth compactly supported function is a bounded operator from $H^{s, \ell_1}{(\mathbb{R}^3)}$ to $H^{s, 2}{(\mathbb{R}^3)}$ for $\ell_1 \geq 2.$
    
\end{proof}

\begin{Corollary} \label{c.K2}
      If $s \in [0,1),$ $p \in (1,2)$  satisfy
      \begin{equation}\label{eq.ps9}
      \begin{aligned}
         &  p+s < 2,
      \end{aligned}
      \end{equation}
      then we have the following properties
      \begin{enumerate}
          \item[i)]  we can find a couple $\ell \in (3/2,2], \ell_1 \in [2,3)$, such that
      \begin{equation}\label{eq.exel8}
      \begin{aligned}
          &  \frac{p-2}{3}  <  \frac{p}{\ell_1} - \frac{1}{\ell} \leq \frac{s(p-1)}2 \ \mbox{if} \ \ p+s < \frac{3}2 , \\
          &   \frac{p-2}{3}  <  \frac{p}{\ell_1} - \frac{1}{\ell} \leq \frac{p}6 - \frac{s}3 ,
          \ \ \mbox{if} \ \  \frac{3}2 \leq p+s < 2, s < \frac{1}2, \\
          &   \frac{p-2}{3}  <  \frac{p}{\ell_1} - \frac{1}{\ell} \leq \frac{s(p-1)}3, \ \mbox{if} \ \  \frac{3}2 \leq p+s < 2, s \geq \frac{1}2,
      \end{aligned}
    \end{equation} 
    \item[ii)] for any couple  $\ell \in (3/2,2], \ell_1 \in [2,3),$ satisfying \eqref{eq.exel8} we have $s < 3/\ell_1$
      and   we have the estimate 
 \begin{equation}\label{eq.ssce6}
        \begin{aligned}
            \left\| \phi|\phi|^{p-1}\right\|_{H^{s,\ell}(\mathbb{R}^3)}\leq C \|\phi\|^p_{H^{s,\ell_1}_\alpha(\mathbb{R}^3)}
        \end{aligned}
    \end{equation}  
    provided $\phi \in H^{s,\ell_1}_\alpha(\mathbb{R}^3).$ 
      \end{enumerate}    
\end{Corollary}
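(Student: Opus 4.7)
The proof splits naturally into the arithmetic statement (i) and the analytic estimate (ii). For (i), I would verify by direct computation that the three-way dichotomy in \eqref{eq.exel8} covers the region $\{(p,s): p+s<2\}$ and produces a nonempty interval for $p/\ell_1 - 1/\ell$ in each subcase; concrete witnesses come from taking $\ell_1 = 2$ and $\ell$ slightly greater than $3/2$. The auxiliary claim $s < 3/\ell_1$ is immediate since $\ell_1 \in [2,3)$ forces $3/\ell_1 \in (1,3/2]$ while $s < 1$.

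For (ii), the starting point is the decomposition afforded by the characterization theorems: since $s < 3/\ell_1$, any $\phi \in H^{s,\ell_1}_\alpha(\mathbb{R}^3)$ can be written as $\phi = g + c_0 \mathbb{G}_\omega$ with $\|g\|_{H^{s,\ell_1}} + |c_0| \lesssim \|\phi\|_{H^{s,\ell_1}_\alpha}$ (Theorem \ref{t.2.2} when $s<3/\ell_1-1$, with $c_0=0$; Theorem \ref{t.23A1} when $3/\ell_1-1<s<3/\ell_1$). Next, I would split off the Hardy-type singularity by writing $\mathbb{G}_\omega = \varphi_2/(4\pi|x|) + R$, where $\varphi_2$ is a smooth compactly supported cutoff equal to $1$ near the origin; the remainder $R$ is Lipschitz and compactly supported, hence in $H^{s,\ell_1}$ for every $s<1$. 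Absorbing $c_0 R$ into the regular part gives $\phi = \tilde g + c_0 \varphi_2/(4\pi|x|)$ with $\tilde g := g + c_0 R$ still controlled by $\|\phi\|_{H^{s,\ell_1}_\alpha}$.

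Introducing nested cutoffs $\eta_0 \leq \varphi_1 \leq \varphi_2$, each equal to $1$ on the support of its predecessor, I would decompose $|\phi|^{p-1}\phi = \eta_0 |\phi|^{p-1}\phi + (1-\eta_0)|\phi|^{p-1}\phi$. On $\operatorname{supp}(\eta_0)$, $\phi$ coincides with $\varphi_1 \tilde g + c_0 \varphi_2/(4\pi|x|)$, so Corollary \ref{c.K1} bounds the $H^{s,\ell}$ norm of the inner piece by $(\|\tilde g\|_{H^{s,\ell_1}} + |c_0|)^p$; the hypothesis $p+s<3/\ell$ follows either from $p+s<3/2$ (case 1) or from strengthening the admissible range of $\ell$ to $(3/2,3/(p+s))$ in cases 2--3, which is nonempty because $p+s<2$. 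For the outer piece, replace $\phi$ with $\tilde \phi := g + (1-\chi) c_0 \mathbb{G}_\omega \in H^{s,\ell_1}(\mathbb{R}^3)$, where $\chi$ is a cutoff supported inside $\{\eta_0=1\}$; since $\tilde\phi=\phi$ on $\operatorname{supp}(1-\eta_0)$, one has $(1-\eta_0)|\phi|^{p-1}\phi = (1-\eta_0)|\tilde\phi|^{p-1}\tilde\phi$, which the Staffilani--Kato chain rule \eqref{eq.sk8} combined with the Sobolev embedding $H^{s,\ell_1} \hookrightarrow L^{\ell_2}$ (valid because $1/\ell_1 \geq s/3$) controls by $\|\phi\|^p_{H^{s,\ell_1}_\alpha}$.

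The principal obstacle is not any single estimate but the reconciliation of constraints: the inner region requires the Corollary \ref{c.K1} condition $p+s<3/\ell$, while the outer region requires the H\"older/Sobolev relation $1/\ell \geq p/\ell_1 - s(p-1)/3$, together with $\ell_1 \geq 2$ and $\ell_2 \geq 3$. These competing demands are precisely what forces the three-regime case split in \eqref{eq.exel8}, and the condition $p+s<2$ is exactly the sharp threshold at which all of them can be simultaneously satisfied.
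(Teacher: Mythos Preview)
Your proposal is correct and follows essentially the same route as the paper: localize via nested cutoffs, apply Corollary~\ref{c.K1} to the piece near the origin and the Staffilani--Kato chain rule \eqref{eq.sk8} combined with Sobolev embedding to the piece away from the origin, then reconcile the resulting constraints $p+s<3/\ell$ and $1/\ell\geq p/\ell_1 - s(p-1)/3$ under the hypothesis $p+s<2$. You are somewhat more explicit than the paper in invoking Theorems~\ref{t.2.2} and~\ref{t.23A1} to decompose $\phi$ and in isolating the $|x|^{-1}$ part of $\mathbb{G}_\omega$, but this is elaboration of the same argument rather than a different one.
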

\begin{proof}
    For any $\phi \in H^{s,\ell_1}_\alpha(\mathbb{R}^3)$ with $s < 3/\ell_1$,
    we have
    \begin{equation}\label{eq.phi}
    \begin{aligned}
        \phi |\phi|^{p-1} &= {\varphi\phi|\phi|^{p-1}+(1-\varphi)\phi|\phi|^{p-1}}\\
        &=\varphi  (\varphi_1\phi) |\varphi_1\phi|^{p-1} + (1-\varphi)  (1-\varphi_2)\phi |(1-\varphi_2)\phi|^{p-1}
    \end{aligned}
    \end{equation}
    where $\varphi, \varphi_1, \varphi_2$ are smooth compactly supported functions that are $1$ near the origin
    and 
    \begin{equation}
        \begin{aligned}
            & \varphi(x) = 1 \ \ \mbox{for $x \in \mathrm{supp} \ \varphi_2$} \\
            & \varphi_1(x) = 1 \ \ \mbox{for $x \in \mathrm{supp} \ \varphi$}.
        \end{aligned}
    \end{equation}
    { In particular  $1-\varphi_2(x) = 1$ for $x \in \operatorname{supp} (1-\varphi)$.}
    We call $\phi_1 = \varphi_1\phi$, that is a function with localised singularity near zero, described in Proposition \ref{pcomp8},
while $\phi_2= (1-\varphi_2)\phi \in H^{s,\ell_1}(\mathbb{R}^3)$ is zero near the origin.

Applying the estimate \eqref{eq.esar77} we can write
\begin{equation}
        \begin{aligned}
            \left\|\varphi \phi_1|\phi_1|^{p-1}\right\|_{H^{s,\ell}(\mathbb{R}^3)}\lesssim \|\phi_1\|^p_{H^{s,\ell_1}_\alpha(\mathbb{R}^3)}
            \lesssim \|\phi\|^p_{H^{s,\ell_1}_\alpha(\mathbb{R}^3)}
        \end{aligned}
    \end{equation}  
provided
 \begin{equation}\label{eq.ps51}
          p+s < \frac{3}{\ell}.
      \end{equation}
Further we apply the estimate \eqref{eq.sk8} for $\phi_2$, combined with the Sobolev embedding, so we have
 \begin{equation}\label{eq.sk55}
        \begin{aligned}
            \|(1-\varphi) \phi_2 |\phi_2|^{p-1}\|_{H^{s,\ell}{(\mathbb{R}^3)}}\leq C \|\phi_2\|^p_{H^{s,\ell_1}{(\mathbb{R}^3)}} \lesssim \|\phi\|^p_{H^{s,\ell_1}_\alpha{(\mathbb{R}^3)}}
        \end{aligned}
    \end{equation}
provided
\begin{equation}\label{eq.ps51 2}
\begin{aligned}
   &  \frac{1}{\ell} \geq \frac{1}{\ell_1} + (p-1) \left( \frac{1}{\ell_1}- \frac{s}{3}\right) =  \frac{p}{\ell_1}- \frac{s(p-1)}{3}.
\end{aligned}
\end{equation}
{The condition \eqref{eq.ps51 2} ensures that the Sobolev embedding $H^{s,\ell_1}(\mathbb{R}^3)\subseteq L^{\ell_2}(\mathbb{R}^3)$, with $\ell_2$ defined in \eqref{eq.sk8}.}
Therefore we have \eqref{eq.ssce6} provided \eqref{eq.ps51}, {\eqref{eq.ps51 2}} and we can find $\ell \in (3/2,2],$ $\ell_1 \in [2,3)$ so that
\begin{equation}
\begin{aligned}
   & \frac{s(p-1)}{3} \geq \frac{p}{\ell_1} -  \frac{1}{\ell},\\
    & \frac{ p+s}{3} < \frac{1}{\ell}.
\end{aligned}
\end{equation}
To study the existence of the couple $(\ell, \ell_1)$ we consider two cases.
In the case
\begin{equation}
    p+s < \frac{3}2
\end{equation}
the solution exists if and only if  
      \begin{equation}
     \begin{aligned}
     \frac{p-2}{3}  <  \frac{s(p-1)}3
     \end{aligned}
   \end{equation}
that is always fulfilled for $p \in (1,2).$
   If 
    $3/2 \leq p+s < 2,$ then we need
    \begin{equation}
          \frac{p-2}{3}  <   \min \left(\frac{p}6 - \frac{s}3 ,  \frac{s(p-1)}3 \right).
    \end{equation}
    and this is always fulfilled, when $p>1, s \in (0,1)$ and $p+s < 2.$

    If $p+s>2,$ then there is no  solution.

\end{proof}
The inequality \eqref{eq.ssce6} was the last tool needed for treating the case $d=3$.
\begin{proof}[Proof of Theorem \ref{t.Schrodinger3}]
      Consider again the operator
      \begin{equation}\label{eq.OPK7}
          \mathfrak{K} (u) = e^{it \Delta_\alpha} u_0 -i {\mu}\int_0^t e^{i(t-\tau)\Delta_\alpha}  u(\tau)|u(\tau)|^{p-1} d\tau, 
      \end{equation}

  First we  apply  Corollary \ref{c.K2} and find $\ell \in (3/2,2], \ell_1 \in [2,3)$ so that
  \begin{equation}\label{eq.cin8}
      p+s < \frac{3}\ell
  \end{equation}
  and  
  \eqref{eq.ssce6} holds.
  
Further we follow the proof of Theorem 4.1 in \cite{K95} and Section 4.4 in \cite{Cazenave} and introduce the space
\begin{equation}
    \mathcal{Y}_s = \bigcap_{\mbox{$q,r$ satisfy \eqref{eq.adm97}}} L^q{(0,T)} H^{s,r}_\alpha ,
\end{equation}
where
\begin{equation}\label{eq.adm97}
    \frac{2}{q} + \frac{3}{r} = \frac{3}{2}, \mbox{ with  $r\in[2,3),\ q\in (4,\infty]$}
\end{equation}

Then Strichartz estimates imply
\begin{equation}
  \|e^{it \Delta_\alpha} u_0\|_{\mathcal{Y}_s} \lesssim \|u_0\|_{H^s_\alpha{(\mathbb{R}^3)}} \leq  R. 
\end{equation}

The application of Strichartz estimate for the Duhamel integral in \eqref{eq.OPK7} gives
\begin{equation}
    \left\| \int_0^t e^{i(t-\tau)\Delta_\alpha}  u(\tau)|u(\tau)|^{p-1} d\tau\right\|_{\mathcal{Y}_s} \lesssim
    \|u|u|^{p-1} \|_{L^{m}{(0,T)} H^{s,\ell}_\alpha}
\end{equation}
  such that
  \begin{equation}
      \frac{7}2 = \frac{2}m + \frac{3}\ell, \ \ m \in (1,4/3).
  \end{equation}
The inequality \eqref{eq.cin8}  and Theorem \ref{t.2.2} imply
\begin{equation}
    \|u|u|^{p-1} \|_{L^{m}{(0,T)} H^{s,\ell}_\alpha}= \|u|u|^{p-1} \|_{L^{m}{(0,T)} H^{s,\ell}}
\end{equation}
so we can apply the estimate \eqref{eq.ssce6} and deduce
\begin{equation}
     \|u|u|^{p-1} \|_{ H^{s,\ell}{(\mathbb{R}^3)}} \lesssim  \|u \|^p_{ H^{s,\ell_1}_\alpha{(\mathbb{R}^3)}}
\end{equation}
Then we can write
\begin{equation}
     \|u|u|^{p-1} \|_{L^m{(0,T)} H^{s,\ell}} \lesssim  \|u \|^p_{ L^{pm}{(0,T)} H^{s,\ell_1}_\alpha} \lesssim T^{1/m-p/m_1} 
     \|u \|^p_{ L^{m_1}{(0,T)} H^{s,\ell_1}_\alpha}
\end{equation}
where We have chosen admissible Strichartz couple $(m_1,\ell_1)$ so that
\begin{equation}
    \frac{p}{m_1} < \frac{1}m, \ \frac{2}{m_1} + \frac{3}{\ell_1}= \frac{3}2.
\end{equation}

In conclusion we arrive at
\begin{equation}
    \| \mathfrak{K} (u) \|_{\mathcal{Y}_s} \lesssim R +  T^{1/m-p/m_1} \|u\|_{\mathcal{Y}_s}^p.
\end{equation}
so taking $T>0$ sufficiently small we can apply the contraction argument of Kato \cite{K95} we complete the proof.

\end{proof}
\section{Data availability} No data were created or analyzed in this article

\appendix

\section{Integral  estimate}

We shall use the following
\begin{Lemma}\label{l.ste44}
    Let $s {<2}$ and $A$ { real constant}.  Let $f({x})$ be a positive decreasing measurable function on $(0,\infty),$ such that
\begin{equation}
    \int_0^\infty \sigma^{-s/2+A-1} f(\sqrt{\sigma}) d\sigma < \infty.
\end{equation} 
Then for any $\omega>0$ there exists a constant $C=C(s,A)$ so that we have
    \begin{equation}\label{eq.inin3}
        \int_0^\infty t^{-s/2} (\omega+t)^{A-1} f(\sqrt{\omega+t}r) dt \leq C  r^{s-2A}, \ \forall r>0.
    \end{equation}
\end{Lemma}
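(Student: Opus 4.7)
My plan is to eliminate the parameters $\omega$ and $r$ by a single change of variables and reduce \eqref{eq.inin3} to a uniform bound for a one-parameter integral. Substituting $u=(\omega+t)r^{2}$ (so that $\sqrt{\omega+t}\,r=\sqrt u$, $t^{-s/2}=r^{s}(u-\omega r^{2})^{-s/2}$, and $dt=r^{-2}du$) rewrites the left-hand side of \eqref{eq.inin3} as
\begin{equation*}
r^{\,s-2A}\int_{\omega r^{2}}^{\infty}(u-\omega r^{2})^{-s/2}\,u^{A-1}\,f(\sqrt u)\,du .
\end{equation*}
Setting $\lambda:=\omega r^{2}\ge 0$, the lemma reduces to the estimate
\begin{equation*}
I(\lambda):=\int_{\lambda}^{\infty}(u-\lambda)^{-s/2}\,u^{A-1}\,f(\sqrt u)\,du\;\le\;C(s,A)
\end{equation*}
uniformly in $\lambda\ge 0$.

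To bound $I(\lambda)$ I would split it at $u=2\lambda$ as $I=I_{1}+I_{2}$. On the tail $[2\lambda,\infty)$ one has $u-\lambda\sim u$, whence $(u-\lambda)^{-s/2}\lesssim u^{-s/2}$, and therefore $I_{2}(\lambda)\lesssim\int_{0}^{\infty}u^{A-s/2-1}f(\sqrt u)\,du$, which is finite by hypothesis. On the near-singular region $[\lambda,2\lambda]$, $u^{A-1}$ is comparable to $\lambda^{A-1}$, the monotonicity of $f$ gives $f(\sqrt u)\le f(\sqrt\lambda)$, and $\int_{\lambda}^{2\lambda}(u-\lambda)^{-s/2}du=\lambda^{1-s/2}/(1-s/2)$ is finite because $s<2$. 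Hence
\begin{equation*}
I_{1}(\lambda)\lesssim \lambda^{A-s/2}\,f(\sqrt\lambda).
\end{equation*}

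The main point of the argument, and the step where the full strength of the hypothesis is used, is the uniform bound
\begin{equation*}
\lambda^{A-s/2}f(\sqrt\lambda)\lesssim 1.
\end{equation*}
I would obtain it by testing the assumed convergent integral on a dyadic window around $\lambda$, exploiting monotonicity. If $A>s/2$, integrating over $[\lambda/2,\lambda]$ gives
\begin{equation*}
\lambda^{A-s/2}f(\sqrt\lambda)\;\lesssim\;\int_{\lambda/2}^{\lambda}\sigma^{A-s/2-1}f(\sqrt\sigma)\,d\sigma\;\le\;\int_{0}^{\infty}\sigma^{A-s/2-1}f(\sqrt\sigma)\,d\sigma,
\end{equation*}
since $f(\sqrt\sigma)\ge f(\sqrt\lambda)$ on this interval and $\int_{\lambda/2}^{\lambda}\sigma^{A-s/2-1}d\sigma=\tfrac{1-2^{-(A-s/2)}}{A-s/2}\lambda^{A-s/2}$. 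For $A<s/2$ I perform the symmetric argument on $[\lambda,2\lambda]$, using $f(\sqrt\sigma)\ge f(\sqrt{2\lambda})$, and then relabel $\mu=2\lambda$; the threshold $A=s/2$ is covered by the logarithmic variant $f(\sqrt\lambda)\ln 2\le\int_{\lambda/2}^{\lambda}\sigma^{-1}f(\sqrt\sigma)\,d\sigma$. Combining this bound with the split above yields $I(\lambda)\le C(s,A)$, and reinserting the factor $r^{\,s-2A}$ coming from the initial scaling proves \eqref{eq.inin3}.
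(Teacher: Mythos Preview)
Your proof is correct. The change of variables $u=(\omega+t)r^{2}$, the dyadic split at $u=2\lambda$, and the window argument for the bound $\lambda^{A-s/2}f(\sqrt\lambda)\lesssim 1$ are all sound, and together they give the claimed uniform estimate with a constant depending only on $s$, $A$, and the value of the hypothesis integral.

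The paper proceeds far more quickly, by the pointwise inequality
\[
(\omega+t)^{A-1}\,f\bigl(\sqrt{\omega+t}\,r\bigr)\;\le\;t^{A-1}\,f\bigl(\sqrt{t}\,r\bigr),
\]
which follows from monotonicity of $f$ together with $(\omega+t)^{A-1}\le t^{A-1}$; after that a single scaling $\sigma=tr^{2}$ finishes the proof in one line. This shortcut, however, tacitly uses $A\le 1$, which is never stated as a hypothesis but happens to hold in every application of the lemma in the paper (there $A=d/(2p)<1$). Your argument is longer but has the advantage of covering all real $A$: the split isolates the piece where $(u-\lambda)^{-s/2}$ is genuinely singular, and the dyadic-window step extracts the pointwise decay $\lambda^{A-s/2}f(\sqrt\lambda)\lesssim 1$ directly from the integrability assumption without any sign restriction on $A-1$. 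So the two proofs trade brevity for generality.
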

\begin{proof}
   We use the assumptions of the Lemma and we can write 
    \begin{equation}
    \begin{aligned}
       \int_0^\infty t^{-s/2} (\omega+t)^{A-1} f(\sqrt{\omega+t}r) dt& \lesssim 
       \int_0^\infty t^{-s/2+A-1}  f(\sqrt{t}r) dt  \\
       & = r^{s-2A} \int_0^\infty \sigma^{-s/2+A-1} f(\sqrt{\sigma}) d\sigma.
    \end{aligned}
    \end{equation}
\end{proof}

\bibliographystyle{plain}
 \bibliography{SING}
\end{document}